\documentclass[11pt, twoside]{article}

\usepackage[english]{babel}

\usepackage{amssymb}
\usepackage{amsmath}
\usepackage{amsthm}
\usepackage{mathrsfs}
\usepackage{titletoc}
\usepackage{color}
\usepackage{algorithm}
\usepackage{algorithmic}
\usepackage{graphicx}
\usepackage{subfigure}
\usepackage{booktabs}
\usepackage{tabularx}
\usepackage{txfonts}
\usepackage{indentfirst}
\usepackage{anysize}

\usepackage{latexsym}

\usepackage[colorlinks=true,
  linkcolor=blue,
  citecolor=red,
  urlcolor=magenta,
  ]{hyperref}

\allowdisplaybreaks

\newtheorem{theorem}{Theorem}[section]
\newtheorem{lemma}[theorem]{Lemma}

\newtheorem{proposition}[theorem]{Proposition}
\theoremstyle{definition}
\newtheorem{remark}[theorem]{Remark}
\newtheorem{example}[theorem]{Example}
\newtheorem{definition}[theorem]{Definition}
\renewcommand{\appendix}{\par
   \setcounter{section}{0}%
   \setcounter{subsection}{0}%
   \setcounter{subsubsection}{0}%
   \gdef\thesection{\@Alph\c@section}%
   \gdef\thesubsection{\@Alph\c@section.\@arabic\c@subsection}%
   \gdef\theHsection{\@Alph\c@section.}%
   \gdef\theHsubsection{\@Alph\c@section.\@arabic\c@subsection}%
   \csname appendixmore\endcsname
 }

\numberwithin{equation}{section}

\begin{document}

\arraycolsep=1pt

\title{\bf\Large A Novel Two-Parameter Penalty: Relaxation Degree Analysis 
and Sparse Signal Recovery\footnotetext{\hspace{-0.35cm} 2020 
{\it Mathematics Subject Classification}.
Primary 94A12; Secondary 90C26, 90C90, 94A15.
\endgraf {\it Key words and phrases.}
sparse signal recovery, restricted isometry property, relaxation degree,
two-parameter nonconvex penalty, iteratively re-weighted least square.}}
\author{Ziwei Li, Wengu Chen, Huanmin Ge and Dachun
Yang\footnote{Corresponding author,
E-mail: \texttt{dcyang@bnu.edu.cn}/{\color{red}\today}/Final version.}
}
\date{}
\maketitle

\vspace{-0.8cm}

\begin{center}
\begin{minipage}{13cm}
{\small{\textbf{Abstract}}\quad
In this article, we introduce a  nonconvex two-parameter penalty function $P_{a,p}$,
parameterized by $a\in(0,\infty)$ and $p\in(0,1]$,
and the relaxation degree RD$_P$ for a separable nonconvex penalty function $P$.
Based on $P_{a,p}$, we further  propose the $P_{a,p}$ minimization framework for sparse signal recovery.
This framework generalizes the TL1 minimization model established by S. Zhang and J. Xin
(corresponding to the special case $p=1$) and provides a unified and flexible family of
nonconvex penalty functions for sparse signal recovery.
Using the sparse convex-combination technique,
we  establish both exact and stable sparse signal recovery
under the restricted isometry property (RIP).
To efficiently solve the resulting nonconvex optimization problem,
we apply a modified iteratively re-weighted least squares
method and the difference of convex functions algorithm (DCA)
to develop the IRLSTLp algorithm for unconstrained
$P_{a,p}$ minimization and prove some convergence results.
Finally, some numerical experiments are conducted to show
the flexibility of the $P_{a,p}$ minimization framework, the robustness of the IRLSTLp,
and also the utility of the relaxation degree.
The novelty of these results lies in three aspects:
(i) The concept of the relaxation degree  can quantitatively  measure how closely
a separable nonconvex penalty function approaches $\ell_0$ and further
reveal  the functional  relationship
of the parameters involved to keep a high performance of
a multi-parameter minimization framework.
(ii) The two-parameter penalty function $P_{a,p}$
offers more flexibility and stronger sparsity-promotion
capability of the $P_{a,p}$ minimization,
compared with both the $\ell_p$ and the TL1 minimization.
(iii) The obtained RIP upper bound for signal recovery via $P_{a,p}$ minimization
is asymptotically optimal (in $a$) as it matches the sharp RIP 
bound obtained by R. Zhang and S. Li
when $a\to \infty$ and, especially, when $p=1$, matches the well-known sharp bound
$\delta_{2s}<\frac{\sqrt{2}}{2}$.
}
\end{minipage}
\end{center}

\vspace{0.2cm}

\tableofcontents

\section{Introduction}

Compressed sensing (CS), an interdisciplinary field that bridges physics, medical imaging,
electrical engineering, and computer science, has attracted significant attentions since around 2004.
It relies on a fundamental principle that many signals in the real world are sparse,
making it possible to reconstruct them using far fewer measurements than required by traditional sampling theory.

Let us consider a sparse signal $x\in\mathbb{R}^N$, a sensing matrix $A\in \mathbb{R}^{M\times N}$, and an observation $y\in\mathbb{R}^M$
with $M\ll N$ such that $y=Ax$ or $y=Ax+ \xi$ within some noisy measurements.
A vector $x\in\mathbb{R}^N$ is said to be \emph{$s$-sparse} for some natural number $s$ if
$$
\left|\mathrm{supp}(x)\right| \le s\ll N.
$$
We would like to search for the sparsest vector, which is equivalently to solve
the following  $\ell_0$ minimization problem:
\begin{equation}\label{eq-Pl0}
\min_{x\in\mathbb{R}^N} \|x\|_0 \quad\text{subject to}\ \ Ax=y
\end{equation}
or the  $\ell_0$ minimization problem within noisy measurements:
\begin{equation}\label{eq-Pl0epsilon}
\min_{x\in\mathbb{R}^N} \|x\|_0 \quad\text{subject to}\ \ \left\|Ax-y \right\|_2\le \epsilon\
\text{with}\ \epsilon\in(0,\infty);
\end{equation}
here and hereafter, for any given $p\in[0,\infty]$ and any $x\in\mathbb{R}^N$,
\begin{align}\label{lp}
\|x\|_{\ell_p}:=\|x\|_{p}:=
\begin{cases}
\left|{\mathop\mathrm{\,supp\,}}(x)\right|\quad& \text{if}\ p=0,\\
\displaystyle\left(\sum_{i=1}^N |x_i|^p\right)^{\frac1p}\quad& \text{if}\ p\in(0,\infty),\\
\displaystyle\max_{1\le i \le N} |x_i|\quad& \text{if}\ p=\infty.
\end{cases}
\end{align}

Since solving such an $\ell_0$ minimization problem is NP-hard in general
(see, for instance, \cite[Section 2.3]{FR13}),
a widely adopted strategy to circumvent this difficulty is
the convex relaxation via $\ell_1$ minimization, also known as Basis Pursuit.
Donoho \cite{D06} and Cand\`{e}s  et al. \cite{CRT06a, CRT06b, CT05}
independently  made seminal contributions to this field,
establishing foundational theory for exact and stable signal recovery.
Their work has paved the way for a broad class of relaxation-based methods;
see, for example, \cite{CLW18,HLSVS}.

Nonconvex relaxation-based methods have been further developed to enhance the sparsity of solutions.
One extensively studied approach is the
$\ell_p$ relaxation with $p\in(0,1)$ (see, for example,
\cite{HLZLT25,S12,WDZ15,WC13,XW13,ZLWX14}),
where the case $p=\frac{1}{2}$ is often regarded as the representative
of the $\ell_{p}$ penalty class and has been specifically examined
in works such as \cite{XZWCL10,ZLWX14}.
Further extensions include  weighted $\ell_p$
(\cite{CL19,GCN20,GCN21b}),
transformed $\ell_1$ (TL1)
(​\cite{LF09,ZX17,ZX18}),  and
$\ell_1-\ell_2$ (\cite{LYHX15,YLHX15,ZW25}).
Empirical evidence suggests that these nonconvex relaxation
methods generally outperform standard $\ell_1$
relaxation although their effectiveness depends on the design
of suitable numerical algorithms.
These relaxation methods are also applied to low-rank matrix
recovery and low-rank tensor recovery;
relevant studies can be found in
\cite{FR13,HL25,YY23}.

A commonly used framework for sparse recovery is
the restricted isometry property (RIP) of sensing matrices, introduced by
Cand\`{e}s   et al.  in \cite{CT05} (see Definition \ref{def-RIP}). Since its emergence,
various sufficient conditions for RIP have been progressively
refined in the literature; see
\cite{CWX10,CZ13,CS08,LG26,ZL18}.
By utilizing convex combinations of sparse vectors for $\ell_1$ penalty, Cai and Zhang \cite{CZ14} established
the sharp RIP upper bound $\delta_{ts}<\sqrt{\frac{t-1}{t}}$ when $t\ge \frac{4}{3}$,
in the sense that, for any $\varepsilon>0$, the condition
$\delta_{ts}<\sqrt{\frac{t-1}{t}}+\varepsilon$
cannot guarantee the exact $s$-sparse signal recovery.
Zhang and  Li \cite{ZL19} extended this work to
$\ell_p$ penalty with $p\in(0,1]$, obtaining
the optimal RIP upper bound $\delta_{2s}< \delta(p)$,
where $\delta(p):=\frac{\eta}{2-p-\eta}$ and $\eta$ is 
the unique positive solution of the equation
$$\frac{p}{2}\eta^{\frac{2}{p}}+\eta-1+\frac{p}{2}=0.$$

Since both $\ell_p$
  and TL1 belong to the family of single-parameter penalty functions, their intrinsic structural constraints necessarily limit their modeling flexibility and adaptability across diverse signal recovery scenarios. To address these limitations, we propose a novel \emph{two-parameter penalty function $P_{a,p}$}, governed by parameters $a\in(0,\infty)$ and $p\in(0,1]$, 
  defined by setting, for any $x:=(x_1,\ldots,x_N)\in\mathbb{R}^N$,
$$P_{a,p}(x):=\sum_{i=1}^N\frac{(a+1)|x_i|^p}{a+|x_i|^p}.$$
 This formulation is essentially distinct from both $\ell_p$
and TL1. Rather than simply extending a single-parameter structure, $P_{a,p}$
multiplicatively couples a rational damping mechanism (inherited from TL1)
with a fractional-power nonlinearity (inherited from $\ell_p$). 
This coupling produces a penalty landscape that is richer and more 
expressive than either predecessor could achieve alone.
  
To rigorously characterize and compare sparsity-inducing penalties, we introduce the concept of the relaxation degree $\mathrm{RD}_P$
  for a separable penalty function $P$. This scalar quantity measures how closely $P$ approximates the $\ell_0$
  pseudonorm, thereby providing an objective and interpretable criterion for benchmarking penalty functions.
  This $\mathrm{RD}_P$ is highlighted particularly in the case
where several penalty functions are hard to visually distinguish, for example,
the proposed $P_{a,p}$ function and the $\ell_a^p$ pseudo-norm introduced in \cite{YY23}.
  Our results show that $\mathrm{RD}_P$
  serves not only as a diagnostic descriptor but also as a principled and quantitative basis for establishing the advantages of $P_{a,p}$ over existing alternatives.



We then establish the exact and the stable recovery results for
$P_{a,p}$ minimization under the suitable RIP condition of sensing matrices,
in which we use a normalization step to overcome the absence
of the scaling property of the $P_{a,p}$ function.
We also  employ a combination of both a modified IRLS method and the DC method
to provide an overall algorithm for the $P_{a,p}$ minimization and
show some convergence results. Finally, we
conduct some numerical experiments to show the robustness of the IRLSTLp and
the flexibility of the $P_{a,p}$ minimization framework.

The novelty of this article lies in the following three aspects:

\begin{enumerate}
  \item[(i)] We introduce the concept of the relaxation degree RD$_P$ of a separable
 penalty function $P$ to quantitatively measure the approximation
degree for $P$ to approach $\ell_0$, whose rationality is validated via experiments.
 Moreover, it provides a specific function expression of
 parameters involved when the performance of the minimization framework is unchanged,
which can offer a guidance to adjust parameters to keep a high performance.
  \item[(ii)] The two-parameter  penalty function $P_{a,p}$ contains two adjustable parameters
  $a\in(0,\infty)$ and $p\in(0,1]$, offering more flexibility and stronger sparsity-promotion
capability of the $P_{a,p}$ minimization framework,
  compared with  the $\ell_p$ and the TL1 minimization.
  \item[(iii)]
  The obtained RIP upper bound for signal recovery via $P_{a,p}$ minimization
is \emph{asymptotically optimal} (in $a$) as it can reduce to the sharp RIP bound obtained by Zhang and Li \cite{ZL19}
when $a\to \infty$ and, especially, when $p=1$, can recover the well-known sharp bound
$\delta_{2s}<\frac{\sqrt{2}}{2}$.
\end{enumerate}

The remainder of this article is organized as follows.

In  Section \ref{sec-theory}, we propose the concept of the relaxation degree RD$_P$ of a
separable penalty function $P$, introduce a two-parameter nonconvex  penalty function $P_{a,p}$,
propose the $P_{a,p}$ minimization framework,
and establish the exact and the stable $P_{a,p}$ sparse recovery.
by using the sparse convex-combination technique
developed by Cai and Zhang in \cite{CZ14} and, independently, G. Xu and Z. Xu in
\cite{xx13} (for $\ell_1$) and Zhang and  Li in \cite{ZL19} for $\ell_p$ with $p\in (0,1]$.
Section \ref{sec-algorithm} is dedicated to the related algorithm.
A modified iteratively reweighted least squares (IRLS) method is employed for the outer loop,
while the difference of convex functions (DC) method is adopted
to solve the resulting sub-problem in the inner loop.
Some related convergence results of both the outer-loop modified IRLS algorithm
and the inner-loop DCA procedure are also given.
In Section \ref{sec-test}, we present the results of our numerical experiments.
To be precise, the performance of the proposed IRLSTLp
algorithm is evaluated with different
parameters $a$ and $p$.  Some experiments are also conducted
to analyze the utility of the relaxation degree.
Two representative classes of matrices, Gaussian matrices and
over-sampled discrete cosine transform (DCT) matrices, are used to test
the algorithm's performance under varying degrees of matrix coherence.
Additionally, comparisons are conducted with DCA of TL1 \cite{ZX18},
DCA of $\ell_1-\ell_2$ \cite{YLHX15} and IRLS of $\ell_{p}$ \cite{LXY13}.
Finally, we conclude this article in Section \ref{s5}.

We end this introduction by making some conventions on notation.
Throughout this article, we let $\mathbb{N}$ be the set
of all positive integers and  $M,N\in\mathbb{N}$ be two given natural numbers
which are always used to denote the dimensions of vectors or matrices,
and let $a\in(0,\infty)$ and $p\in[0,\infty]$ be two given parameters which are related to penalty functions.
We use $\mathbf{0}$ to denote the zero vector.
For any given $x\in\mathbb{R}^N$, $A\in\mathbb{R}^{M\times N}$, and any given subset $T$ of $\{1,\ldots,N\}$,
we always denote by $x_T$ the vector in $\mathbb{R}^N$
which coincides with $x$ on the components with the index in $T$ and is zero on the components  with the index
outside $T$,  and also denote by $A_T$ the matrix in $\mathbb{R}^{M\times N}$
which coincides with $A$ on the columns with the index in $T$ and is zero outside.
The symbol $\mathrm{Ker}\, A$ is always used  to denote the \emph{kernel  space} (also called the \emph{null space}) of $A$.
Let $r(x)$  denote the \emph{decreasing rearrangement} of $x$
and, for any $i\in\{1,\ldots,N\}$, $r(x)_{i}$ denote the $i$-th largest component  in magnitude.
We further define
\begin{equation}\label{eq-1.4}
\sigma_j(x)_{1}:=\sum_{i=j+1}^N r(x)_i,  \quad \forall j\in\{0,\ldots,N-1\}.
\end{equation}
Also, $\lim_{a\to 0^+}$ means $a\in (0, c_0)$  with $c_0\in (0,1)$ and $a\to 0$.
For any set $T$, we use $T^\complement$ to denote its complement.
Finally, in all proofs we
consistently retain the notation introduced
in the original theorem (or related statement).

\section{$P_{a,p}$ Minimization Framework}\label{sec-theory}

This section consists of four subsections. In Subsection \ref{sebsec-degree},
we introduce the concept of the relaxation degree RD$_P$ of a
separable penalty function $P$ to quantitatively measure how closely the penalty function $P$
approximates $\ell_0$ (Definition \ref{rd}). Then, in Subsection \ref{sebsec-TLp},
we introduce the $P_{a,p}$ penalty function in \eqref{2.4x}
and compare its relaxation degree with some known penalty functions.
Moreover, we give some sufficient conditions based on RIP
(Theorems \ref{thm-exact}, \ref{thm-stable}, and \ref{thm-stable,ep})
to guarantee the $P_{a,p}$ sparse recovery in Subsection \ref{subsec-main},
while their proofs are presented in Subsection \ref{subsec-proof}.

\subsection{Relaxation Degree RD$_P$}\label{sebsec-degree}

We begin with a concept of permutation invariant functions.
A function $P$ is said to be \emph{permutation invariant} if, for any  permutation
$\sigma:\ \{1,\ldots,N\}\mapsto\{1,\ldots,N\}$ and any
$x\in\mathbb{R}^N$,
$$
P\left(x_1,\ldots,x_N\right)=P\left(x_{\sigma(1)},\ldots,x_{\sigma(N)}\right).
$$
For a  given permutation invariant penalty function $P$ and a minimization problem
\begin{equation}\label{Ae}
\min_x P(x)\quad\text{subject to}\ \ Ax=Ae,
\end{equation}
where $e$ is a true signal,
the exact recovery occurs  when the constraint hyperplane $Ax=Ae$
intersects with the $P$ sphere (level line) containing $e$ only at $e$; see Figure \ref{fig-exact}(a).
When $e$ is sparse, this may happen for many choices of $A$.
If we only consider the region near $e$, as is presented in Figure \ref{fig-exact}(b),
more $A$ can be chosen;
furthermore, let us compare two penalty functions with their level lines $\mathrm{I}$ and $\mathrm{II}$.
The level line $\mathrm{I}$ is more attracted by the axes, which implies that,
for a given sensing matrix $A$, the penalty with
level line I is more likely to achieve the exact recovery.

\begin{figure}[ht]
  \centering
  \begin{tabular}{cc}
  \includegraphics[width=4.5cm]{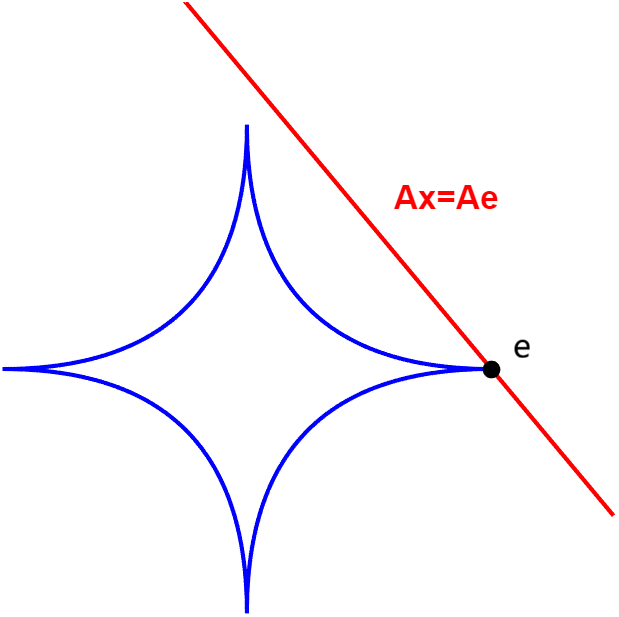}&
  \includegraphics[width=6cm]{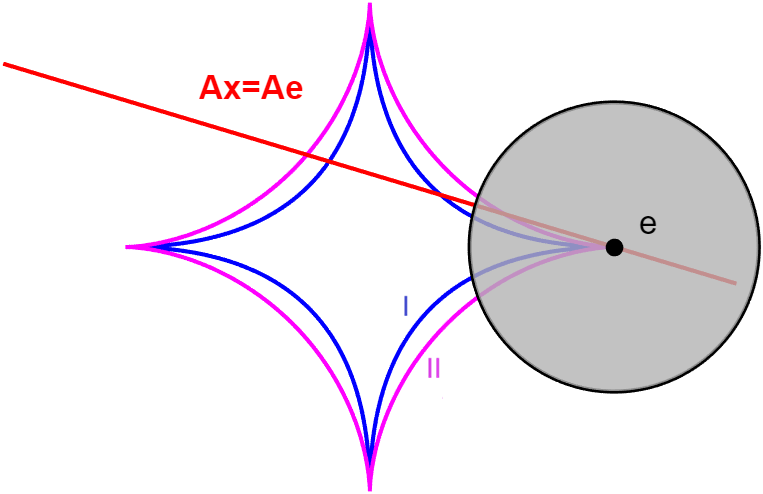}\\
  (a) & (b)
  \end{tabular}
  \caption{Exact recovery for sparse $e$}\label{fig-exact}
\end{figure}

These observations inspire us to compare the approximation degree for the level lines
of different penalty functions to approach coordinate axes,
via the ratio of the distance from the origin to the level line containing some given sparse vector $e$ along the diagonal direction
and the distance from the origin to $e$.

In what follows, we always let
\begin{equation}\label{sp}
P(x):=\rho(|x_1|)+\cdots+\rho(|x_N|),\
\forall\, x:=(x_1,\ldots,x_N)\in\mathbb{R}^N,
\end{equation}
be a \emph{separable function} with $\rho$
being  increasing and concave in $[0,\infty)$, $\rho(0)=0$, and $\rho(1)=1$.
Obviously, such $P$ is permutation invariant.
For this class of functions, we introduce a  concept of the relaxation degree.

\begin{definition}\label{rd}
Let $P$ be a separable penalty function as in \eqref{sp}
and $x_{\mathrm{diag}}$ be the intersection point
of the hyper-surface $P(x)=1$ and
the diagonal line
$\pi:= \{x\in\mathbb{R}^N:\ x_1=\cdots=x_N \}.$
Then the \emph{relaxation degree} RD$_P$ of $P$ is
defined as
\begin{equation*}
\mathrm{RD}_P:=\left\|x_{\mathrm{diag}}\right\|_2.
\end{equation*}
\end{definition}

\begin{remark}
\begin{enumerate}
\item[\rm(i)]
By the separability of $P$,  the intersection points of a given
level line and each coordinate axis own the same distance from the origin.
Let $\{\vec{e}_j\}_{j=1}^N$ be the standard orthonormal basis of $\mathbb{R}^N$.
In \eqref{Ae}, by choosing $e:=\vec{e}_j$ for some $j\in\{1,\ldots,N\}$,
$\mathrm{RD}_P$ is exactly the ratio of the distance from the origin to the level line $P(x)=P(e)$ along the diagonal direction
and the distance from the origin to $e$.
\item[\rm(ii)]
As we all know that the level line $\|\cdot\|_0=1$ is composed of
$N$ straight lines lying in each coordinate axis except the origin,
then a smaller value of $\mathrm{RD}_P$ for a penalty function $P$
usually means a higher approximation degree to approximate $\ell_0$ or, in other words,
a lower relaxation to $\ell_0$. That is why we call the index
$\mathrm{RD}_P$ as ``relaxation degree".
Obviously, RD$_P$ quantitatively measures how closely the given
separable penalty function $P$ approaches $\ell_0$.
\end{enumerate}
\end{remark}

Let us further clarify how RD$_P$ measures the
approximation degree for the given penalty function $P$
to approach $\ell_0$ through an example.

\begin{example}
Let $p\in(0,1]$ and
\begin{align}\label{pp}
P_p(x):=\|x\|^p_{\ell_p}
\end{align}
for any $x\in\mathbb R^N$,
where $\|\cdot\|_{\ell_p}$ is as in \eqref{lp}.
It is easy to prove $\mathrm{RD}_{P_p}=N^{\frac 12-\frac 1p}$.
We find that $\mathrm{RD}_{P_p}$ decreases to $0$ as $p$ decreases to $0$,
which coincides with the known fact that $\ell_p$ penalty can acquire the sparsity
promotion as $p$ decreases. Besides, it is also known that $\ell_p$ penalty
when $p>1$ cannot recover a sparse solution
and, naturally, $\ell_1$ relaxation is called the \emph{tightest convex relaxation}.
Here we calculate $\mathrm{RD}_{P_1}=\frac{1}{\sqrt{N}}$ and $\mathrm{RD}_{P_p}\in (0,\frac{1}{\sqrt{N}}]$. Thus, $\frac{1}{\sqrt{N}}$ can be regarded
as the \emph{critical value} of the relaxation degree for sparse
recovery via $\ell_p$ relaxation.
\end{example}

\subsection{Two-Parameter Penalty $P_{a,p}$}\label{sebsec-TLp}

In this article,
we consider the following nonnegative function
\begin{equation}\label{eq-def-rho}
\rho_{a,p}(t):=\frac{(a+1)|t|^p}{a+|t|^p},\quad\forall\,t\in\mathbb{R}
\end{equation}
with two parameters $a\in(0,\infty)$ and $p\in(0,1]$.

We first give some properties of $\rho_{a,p}$.

\begin{lemma}\label{lem-TLp,prop}
Let $a\in(0,\infty)$ and $p\in(0,1]$. The function $\rho_{a,p}$
in \eqref{eq-def-rho} has the following properties:
\begin{enumerate}
\item[\rm(i)] For any $t\in\mathbb{R}$, $\rho_{a,p}(t)=\rho_{a,1}(|t|^p).$
\item[\rm(ii)] $\rho_{a,p}$ is strictly increasing and concave in $[0,\infty)$
with
$$
\rho_{a,p}(0)=0,\ \ \rho_{a,p}(1)=1,\ \ \text{and}\ \ \lim_{t\to\infty}\rho_{a,p}(t)=a+1.
$$
\item[\rm(iii)] For any $t\in\mathbb{R}$,
$\rho_{a,p}(t)\le \frac{a+1}{a}|t|^p;$
$|t|^p\le\rho_{a,p}(t)\le1$ if and only if $|t|\le 1$.
\item[\rm(iv)] $\rho_{a,p}'$ is continuous in $(0,\infty)$; moreover,
$$
\lim_{t\to0}\rho_{a,p}'(t)=
\begin{cases}
\infty\quad&\text{when}\  p\in(0,1), \\
\displaystyle\frac{a+1}{a}\quad&\text{when}\  p=1
\end{cases}
\ \ \text{and}\ \
\lim_{t\to\infty}\rho_{a,p}'(t)=0.
$$
\item[\rm(v)] For any $t\in\mathbb{R}$ and $c\in\mathbb{R}$,
\begin{align*}
\rho_{a,p}(ct)
\begin{cases}
\ge |c|^p \rho_{a,p}(t)\quad&\text{when}\  |c|\le 1, \\
\le |c|^p \rho_{a,p}(t)\quad&\text{when}\  |c|> 1.
\end{cases}
\end{align*}
\item[\rm(vi)] For any $t_1,t_2\in\mathbb{R}$,
\begin{align*}
|\rho_{a,p}(t_1)-\rho_{a,p}(t_2)|&\le
\rho_{a,p}(t_1+t_2)\le\rho_{a,p}(|t_1|+|t_2|)\le \rho_{a,p}(t_1)+\rho_{a,p}(t_2)\\
&\le2 \rho_{a,p}\left(\frac{|t_1|+|t_2|}{2}\right).
\end{align*}
\end{enumerate}
\end{lemma}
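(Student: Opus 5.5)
The plan is to reduce everything to the one-variable function $f(u):=\rho_{a,1}(u)=\frac{(a+1)u}{a+u}$ on $[0,\infty)$, using (i), and then handle each item by elementary calculus.

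Item (i) is immediate from \eqref{eq-def-rho}, since $\rho_{a,1}(|t|^p)=\frac{(a+1)|t|^p}{a+|t|^p}$. For (ii) we use $f'(u)=\frac{a(a+1)}{(a+u)^2}>0$ and $f''(u)<0$, so $f$ is strictly increasing and concave. Since $u=t^p$ is strictly increasing and concave on $[0,\infty)$ for $p\in(0,1]$, the composition of an increasing concave function with a concave function gives that $\rho_{a,p}$ is strictly increasing and concave. The values $f(0)=0$ and $f(1)=1$ and the limit $f(u)\to a+1$ as $u\to\infty$ transfer directly to $\rho_{a,p}$.

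For (iii), the first inequality follows from $a+|t|^p\ge a$. For the second, we write $\rho_{a,p}(t)-|t|^p=\frac{|t|^p(1-|t|^p)}{a+|t|^p}$, which is nonnegative exactly when $|t|\le 1$. By monotonicity, $\rho_{a,p}(t)\le1$ if and only if $|t|\le1$. For (iv), the chain rule gives, for $t>0$,
$$\rho_{a,p}'(t)=\frac{a(a+1)p\,t^{p-1}}{(a+t^p)^2}.$$
This is continuous on $(0,\infty)$. As $t\to0$ it tends to $\infty$ when $p<1$ and to $\frac{a+1}{a}$ when $p=1$. As $t\to\infty$ it behaves like $t^{-p-1}\to0$.

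For (v), we compute directly:
$$\frac{\rho_{a,p}(ct)}{|c|^p\rho_{a,p}(t)}=\frac{a+|t|^p}{a+|c|^p|t|^p}.$$
For $t\neq0$ this ratio is at least $1$ when $|c|\le1$ and at most $1$ when $|c|>1$. The case $t=0$ is trivial.

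Item (vi) is the main point, and we reduce it to standard facts about a concave increasing $\rho:=\rho_{a,p}$ on $[0,\infty)$ with $\rho(0)=0$, applied to the even function $t\mapsto\rho(|t|)$.

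\emph{Subadditivity.} We first show $\rho(x+y)\le\rho(x)+\rho(y)$ for $x,y\ge0$. Concavity and $\rho(0)=0$ imply that $\rho(s)/s$ is nonincreasing. Hence, for $x+y>0$,
$$\rho(x)\ge\frac{x}{x+y}\rho(x+y)\quad\text{and}\quad\rho(y)\ge\frac{y}{x+y}\rho(x+y),$$
and summing gives the claim.

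\emph{The chain of inequalities.} This yields the third inequality in (vi). The second, $\rho(|t_1+t_2|)\le\rho(|t_1|+|t_2|)$, follows from monotonicity and the triangle inequality. The last, $\rho(|t_1|)+\rho(|t_2|)\le2\rho\bigl(\frac{|t_1|+|t_2|}{2}\bigr)$, is midpoint concavity.

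\emph{The first inequality.} This is the one needing care. Suppose without loss of generality $|t_1|\ge|t_2|$. Subadditivity gives
$$\rho(|t_1|)\le\rho(|t_1+t_2|)+\rho(|t_2|),$$
so $\rho(|t_1|)-\rho(|t_2|)\le\rho(|t_1+t_2|)$. This bounds the difference by $\rho(t_1+t_2)$ as stated, and it is valid for arbitrary signs because we only used $|t_1|\le|t_1+t_2|+|t_2|$.

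I expect no real obstacle beyond keeping the sign and absolute-value bookkeeping straight in (vi).
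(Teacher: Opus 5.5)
Your proof is correct. Items (i)--(v) and the first, second and last inequalities of (vi) follow the paper's argument. In particular, the paper also gets $|\rho_{a,p}(t_1)-\rho_{a,p}(t_2)|\le\rho_{a,p}(t_1+t_2)$ by applying subadditivity to the pair $t_1+t_2,\,-t_2$ and using evenness, which is your $|t_1|\le|t_1+t_2|+|t_2|$ step.

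The genuine difference is how you obtain subadditivity, $\rho_{a,p}(|t_1|+|t_2|)\le\rho_{a,p}(t_1)+\rho_{a,p}(t_2)$. You derive it from the general fact that a concave function on $[0,\infty)$ vanishing at $0$ has $\rho(s)/s$ nonincreasing. This rests entirely on the concavity from (ii), which you justify by composing the increasing concave $\rho_{a,1}$ with the concave map $t\mapsto t^p$. The paper instead verifies it by direct computation. With $u=|t_1|^p$ and $v=|t_2|^p$,
\[
\rho_{a,p}(t_1)+\rho_{a,p}(t_2)=(a+1)\frac{u+v+2a^{-1}uv}{a+u+v+a^{-1}uv}\ge\rho_{a,1}(u+v+a^{-1}uv)\ge\rho_{a,1}(u+v)\ge\rho_{a,1}\bigl((|t_1|+|t_2|)^p\bigr),
\]
which uses only the monotonicity of $\rho_{a,1}$ and $(x+y)^p\le x^p+y^p$.

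What each route buys:
\begin{itemize}
\item Your route is shorter and more general. It shows every inequality in (vi) holds for any $\rho$ that is increasing and concave with $\rho(0)=0$, i.e.\ for the whole class of separable penalties used in the relaxation-degree setup, not only for $\rho_{a,p}$.
\item The paper's subadditivity step does not use concavity at all. It exploits the factorization $\rho_{a,p}=\rho_{a,1}(|\cdot|^p)$ from (i), and yields the slightly sharper intermediate bound $\rho_{a,p}(t_1)+\rho_{a,p}(t_2)\ge\rho_{a,1}(u+v+a^{-1}uv)$.
\end{itemize}

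One small bookkeeping point: in (v) you divide by $|c|^p\rho_{a,p}(t)$, so you should set aside $c=0$ as trivial alongside $t=0$. The paper's multiplicative form $\rho_{a,p}(ct)=|c|^p\rho_{a,p}(t)\frac{a+|t|^p}{a+|c|^p|t|^p}$ avoids this.
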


\begin{proof}
(i), (ii), and (iii) are obvious, (iv) is easy to be verified by calculating
$$
\rho_{a,p}'(t) =\frac{a(a+1)p|t|^{p-2}t}{(a+|t|^p)^2},
$$
 and
(v) can be  inferred from the observation
$$
\rho_{a,p}(ct)=|c|^p\rho_{a,p}(t)\frac{a+|t|^p}{a+|c|^p|t|^p}.
$$
Finally, we prove (vi).
Applying  the increasing property  of $\rho_{a,1}$  in $[0,\infty)$
and the fact that, for any $p\in(0,1]$, $x\ge0$, and $y\ge0$,
\begin{equation}\label{eq-elementary}
(x+y)^p\le x^p +y^p,
\end{equation}
we find that, for  any $t_1,t_2\in\mathbb{R}$,
\begin{align*}
\rho_{a,p}(t_1)+\rho_{a,p}(t_2)
&=(a+1)\frac{|t_1|^p +|t_2|^p +2a^{-1}|t_1|^p|t_2|^p}{a+|t_1|^p +|t_2|^p +a^{-1}|t_1|^p|t_2|^p}\\
&\ge\rho_{a,1}(|t_1|^p +|t_2|^p +a^{-1}|t_1|^p|t_2|^p)
\ge\rho_{a,1}(|t_1|^p +|t_2|^p)\\
&\ge \rho_{a,1}\left((|t_1| +|t_2|)^p\right)
=\rho_{a,p}(|t_1|+|t_2|).
\end{align*}
From this, the fact that $\rho_{a,p}$ is even, and
the increasing property of $\rho_{a,p}$ in $[0,\infty)$,
it then follows that
$$
\rho_{a,p}(t_1)+\rho_{a,p}(t_2) \ge \rho_{a,p}(|t_1|+|t_2|) \ge \rho_{a,p}(t_1+t_2).
$$
By this with $t_1$ and $t_2$ therein replaced, respectively, by $t_1+t_2$ and $-t_2$
and also by $\rho_{a,p}(t)=\rho_{a,p}(-t)$ for any $t\in\mathbb{R}$, we further obtain
$$
\rho_{a,p}(t_1)-\rho_{a,p}(t_2)\le\rho_{a,p}(t_1+t_2).
$$
Thus,
$$
\left|\rho_{a,p}(t_1)-\rho_{a,p}(t_2)\right|\le\rho_{a,p}(t_1+t_2).
$$
The remaining desired inequality follows from  the concavity of $\rho_{a,p}$.
This finishes the proof of Lemma \ref{lem-TLp,prop}.
\end{proof}

Furthermore, let us define a two-parameter  penalty function which is a separable penalty function
$P_{a,p}:\ \mathbb{R}^N\rightarrow [0,\infty)$ by setting,
for any $x:=(x_1,\ldots,x_N)\in\mathbb{R}^N$,
\begin{equation}\label{2.4x}
P_{a,p}(x):=\sum_{i=1}^N \rho_{a,p}(x_i)=\sum_{i=1}^N\frac{(a+1)|x_i|^p}{a+|x_i|^p}.
\end{equation}
One can verify that, for any $x\in\mathbb{R}^N$,
$$
\lim_{a\rightarrow0^+}P_{a,p}(x)=\|x\|_0 \ \ \text{and}
\ \
 \lim_{a\rightarrow\infty}P_{a,p}(x)=\|x\|_{p}^p.
$$
That is to say, by selecting the value of $a$, the $P_{a,p}$ function can approximate either $\|\cdot\|_0$ or $\|\cdot\|_{p}^p$.

\begin{proposition}\label{prop-RD}
Let $a\in(0,\infty)$, $p\in(0,1]$, and $P_{a,p}$ be as in \eqref{2.4x}.
Then
$$
\mathrm{RD}_{P_{a,p}}= \left[\frac{a}{(a+1)N-1}\right]^{\frac1p} \sqrt{N};
$$
moreover, $\mathrm{RD}_{P_{a,p}}$ is monotonously increasing on either $p$ or $a$.
\end{proposition}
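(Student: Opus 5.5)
Since $P_{a,p}$ is separable with $\rho=\rho_{a,p}$, I would first locate $x_{\mathrm{diag}}$ explicitly and then read off both claims from a closed-form expression.

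\emph{Step 1: the intersection point.} Any point of the diagonal $\pi$ has the form $x=(t,\ldots,t)$. Because $\rho_{a,p}$ is even, I may take $t\ge 0$ (the point with $-t$ has the same norm). Then $P_{a,p}(x)=N\rho_{a,p}(t)$, so the hyper-surface $P_{a,p}(x)=1$ meets $\pi$ exactly where $\rho_{a,p}(t)=\frac1N$.

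By Lemma \ref{lem-TLp,prop}(ii), $\rho_{a,p}$ is continuous and strictly increasing on $[0,\infty)$, with values running from $0$ up to $a+1$. Since $\frac1N\in(0,1]\subset(0,a+1)$, there is exactly one $t>0$ with $\rho_{a,p}(t)=\frac1N$. Hence $x_{\mathrm{diag}}$ is well defined (up to the sign symmetry).

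To find $t$, I solve $\frac{(a+1)t^p}{a+t^p}=\frac1N$, that is,
\[
N(a+1)t^p=a+t^p .
\]
This is linear in $t^p$ and gives $t^p=\frac{a}{(a+1)N-1}$. The denominator is positive because $(a+1)N-1\ge a>0$. Therefore
\[
\mathrm{RD}_{P_{a,p}}=\|x_{\mathrm{diag}}\|_2=\sqrt{N}\,t=\left[\frac{a}{(a+1)N-1}\right]^{\frac1p}\sqrt N .
\]

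\emph{Step 2: monotonicity.} Write $b(a):=\frac{a}{aN+N-1}$.

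For fixed $p$, a direct derivative gives $b'(a)=\frac{N-1}{(aN+N-1)^2}\ge0$. Since $s\mapsto s^{1/p}$ is increasing on $[0,\infty)$, $\mathrm{RD}_{P_{a,p}}$ is increasing in $a$.

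For fixed $a$, I observe that
\[
(aN+N-1)-a=(a+1)(N-1)\ge0,
\]
so $b\in(0,1]$ and $\log b\le 0$. Then $b^{1/p}=\exp\big(\frac1p\log b\big)$. As $p$ increases, $\frac1p$ decreases, and multiplying by $\log b\le0$ makes the exponent increase, so $b^{1/p}$ is increasing in $p$.

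Both monotonicities are strict when $N\ge2$. In the degenerate case $N=1$ one gets $\mathrm{RD}_{P_{a,p}}\equiv1$, so there "monotonously increasing" must be read non-strictly.

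\emph{Main obstacle.} There is no genuine analytic difficulty here. The only points needing care are two: confirming that $\frac1N$ lies in the range of $\rho_{a,p}$, so that $x_{\mathrm{diag}}$ exists and is unique (this is where Lemma \ref{lem-TLp,prop}(ii) is used), and checking $b\le1$, which is exactly what makes the dependence on $p$ go in the increasing direction.
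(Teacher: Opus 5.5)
Your proposal is correct and is exactly the direct calculation the paper has in mind; the paper simply omits the details. You solve $N\rho_{a,p}(t)=1$ for $t^p$, then get monotonicity in $a$ from $b'(a)\ge 0$ and monotonicity in $p$ from $b\le 1$, and your remark that the monotonicity is only non-strict when $N=1$ is a fair refinement.
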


Proposition \ref{prop-RD} follows from a straight calculation
and we omit the details.

Using Proposition \ref{prop-RD}, we find that
\begin{equation}\label{RDPap1}
\lim_{a\to\infty}\mathrm{RD}_{P_{a,p}}=N^{\frac 12-\frac 1p}
=\mathrm{RD}_{P_p},\ \lim_{a\to 0^+}\mathrm{RD}_{P_{a,p}}=0,
\end{equation}
and
\begin{equation}\label{RDPap2}
\lim_{p\to 0^+}\mathrm{RD}_{P_{a,p}}=0,
\end{equation}
where $P_p$ is as in \eqref{pp}.
These clearly indicate that, as $a\to\infty$, the $P_{a,p}$ penalty function
approaches $P_p$ and, as $a\to 0^+$ or $p\to 0^+$, the $P_{a,p}$ penalty function
approaches $\ell_0$.

\begin{figure}[h]
  \centering
  \begin{tabular}{cccc}
  \includegraphics[width=3.3cm]{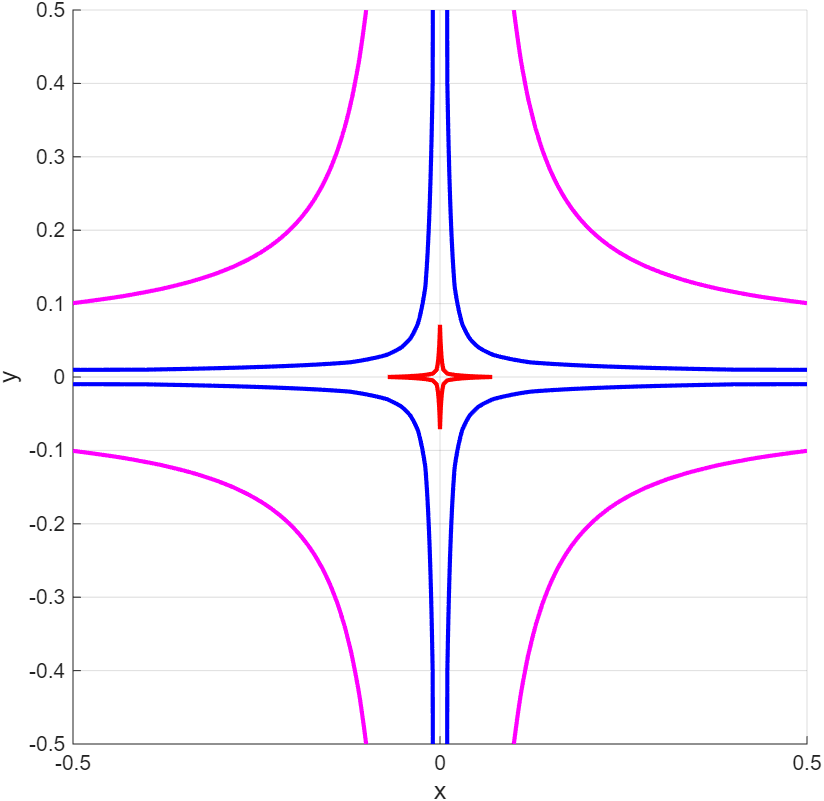}&
  \includegraphics[width=3.3cm]{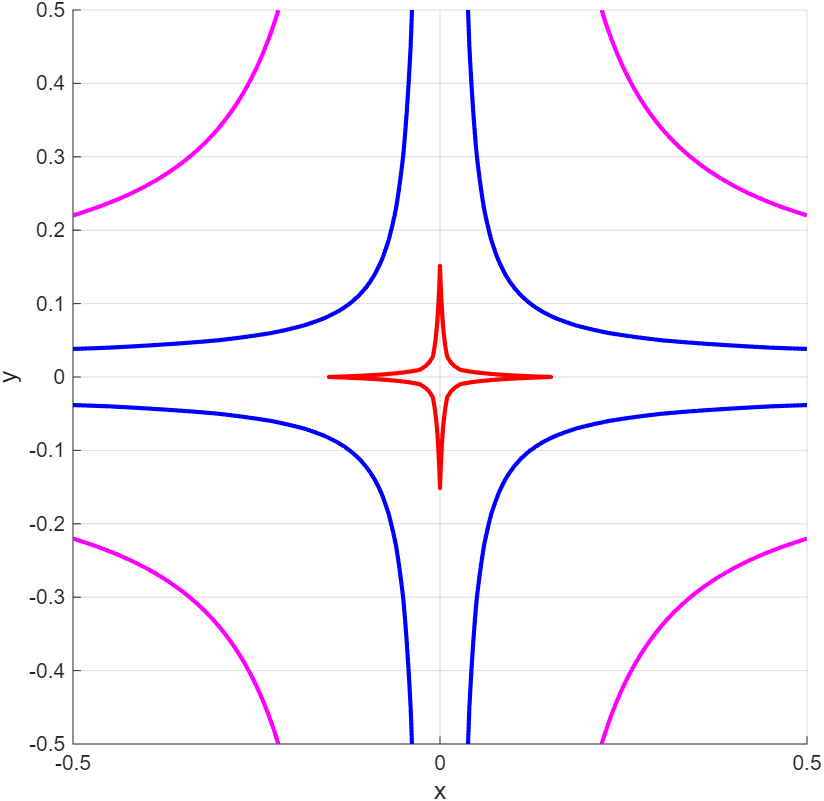}&
  \includegraphics[width=3.3cm]{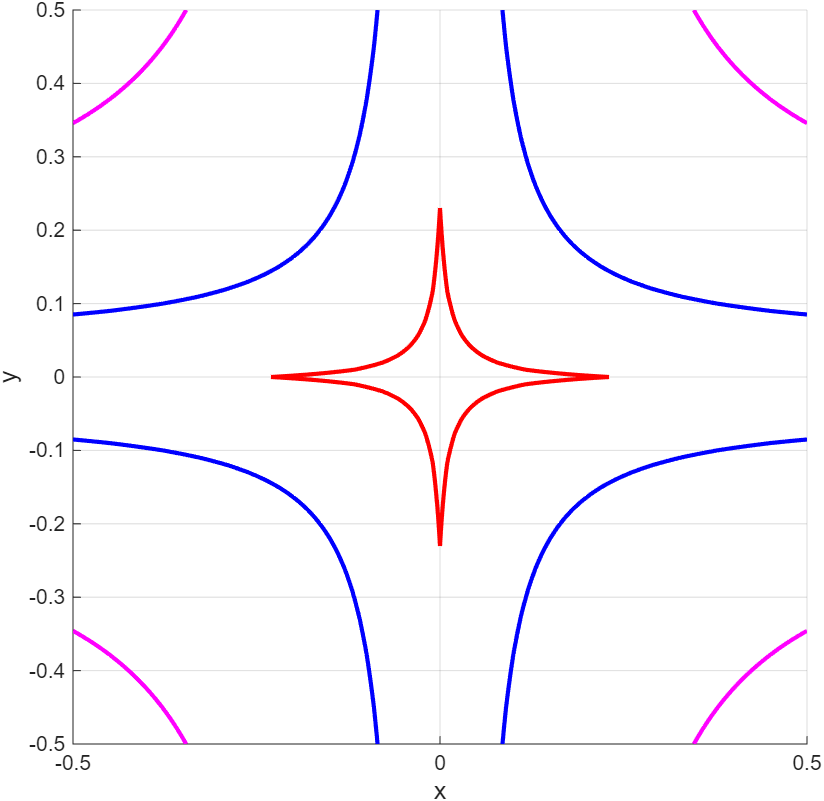}&
  \includegraphics[width=3.3cm]{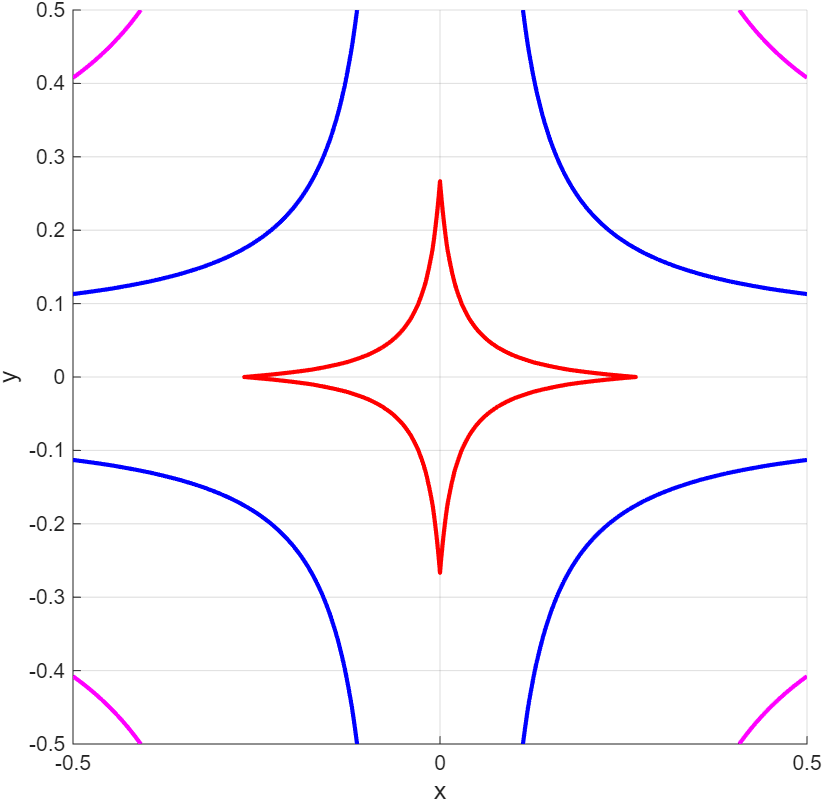}\\
  \quad$P_{0.1,0.5}$ & \quad$P_{0.1,0.7}$ & \quad$P_{0.1,0.9}$ & \quad$P_{0.1,1}$
  \end{tabular}
  \caption{Level lines of $P_{0.1,p}$ with $p\in\{0.5,0.7,0.9,1\}$}\label{fig-a=0.1}
\end{figure}

\begin{figure}[h]
    \centering
  \begin{tabular}{cccc}
  \includegraphics[width=3.3cm]{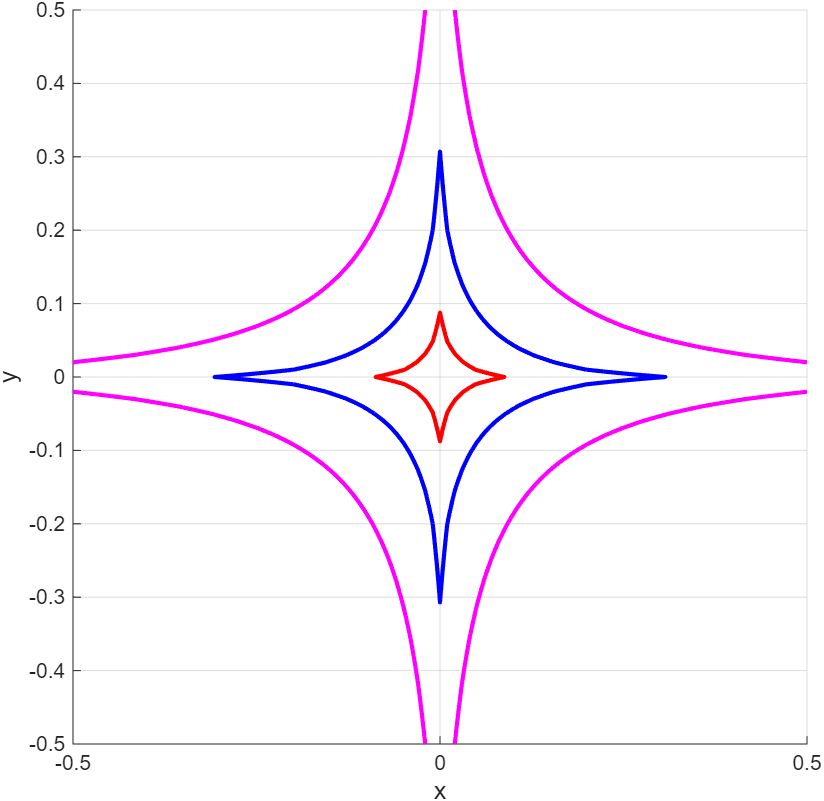}&
  \includegraphics[width=3.3cm]{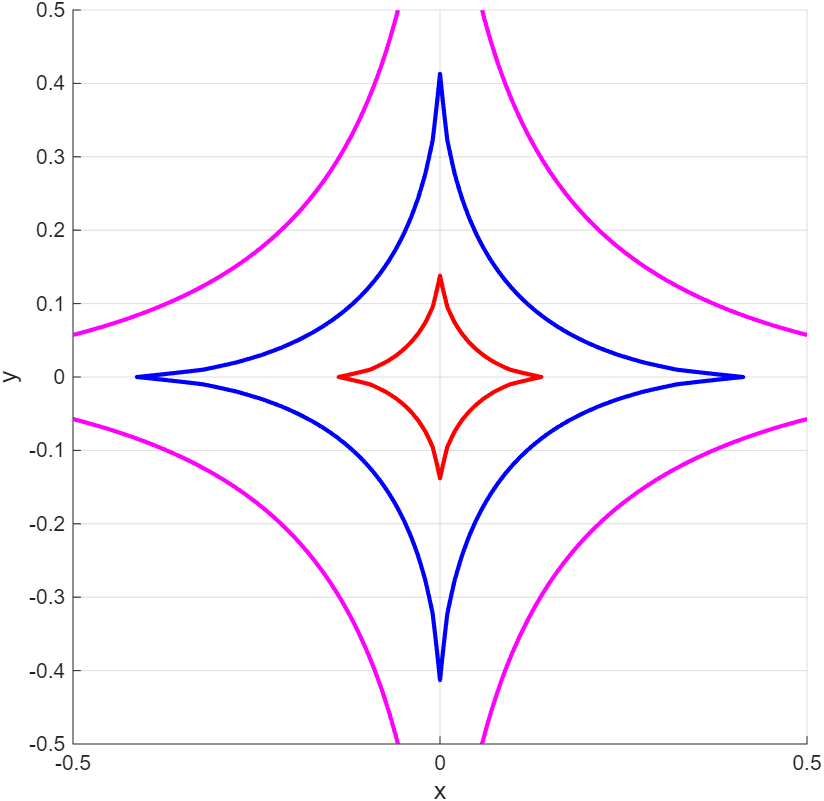}&
  \includegraphics[width=3.3cm]{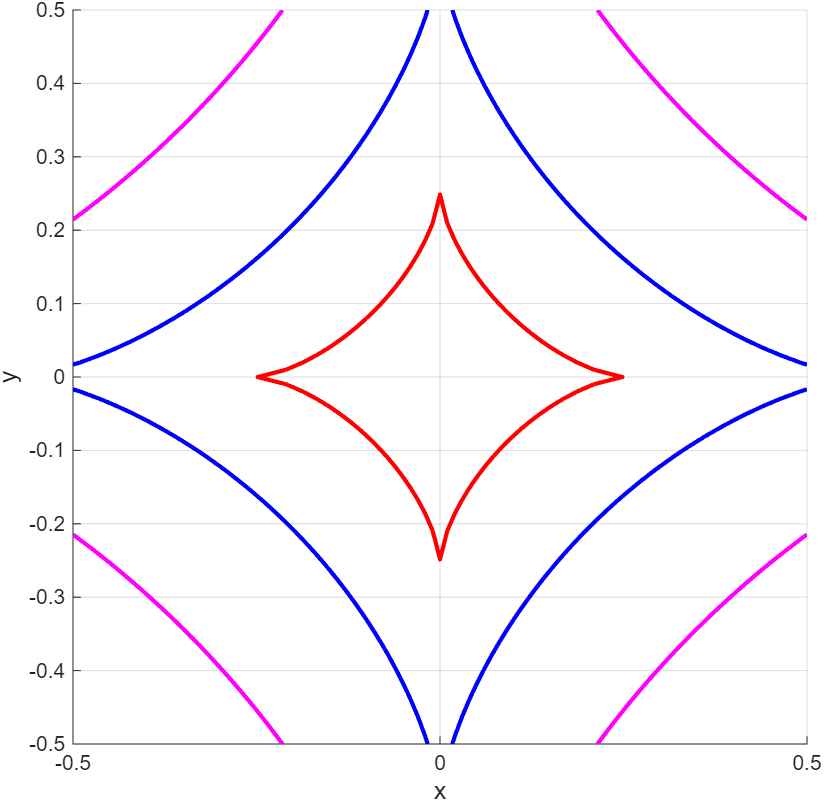}&
  \includegraphics[width=3.3cm]{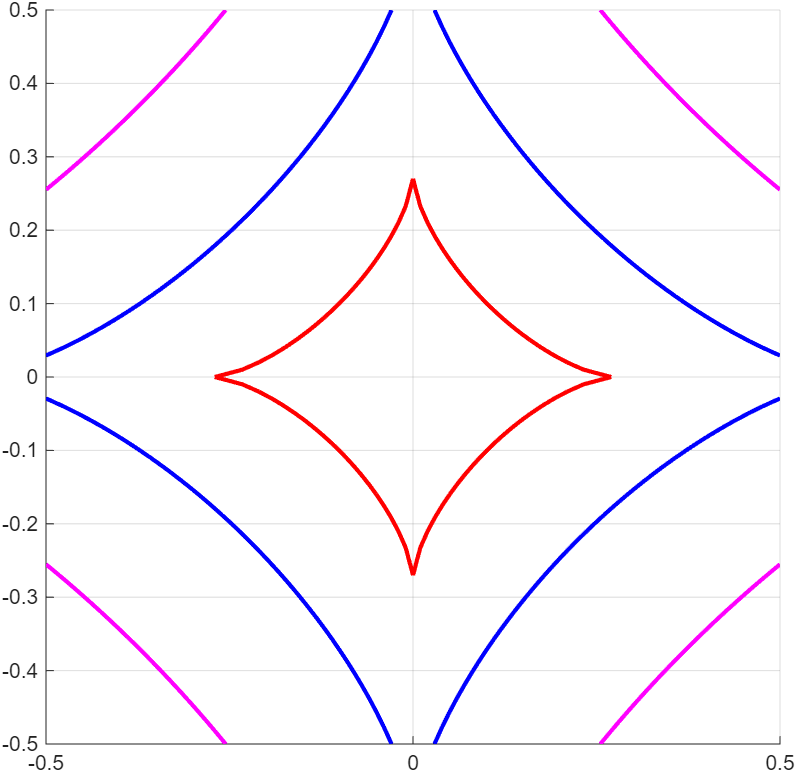}\\
  \quad$P_{0.5,0.7} $ &\quad$P_{1,0.7}$ & \quad $P_{10,0.7}$ & \quad $P_{0.7}$
  \end{tabular}
  \caption{Level lines of $P_{a,0.7}$ with $a\in\{0.5,1,10\}$ and  $P_{0.7}$}\label{fig-p=0.7}
\end{figure}

To visualize, we plot the level lines of several specific $P_{a,p}$ functions.
In Figure \ref{fig-a=0.1}, we compare the level lines of $P_{a,p}$
with $p=0.5$, $p=0.7$, $p=0.9$, or $p=1$ when $a=0.1$;
in Figure \ref{fig-p=0.7}, we compare the level lines of $P_p$ and of
$P_{a,p}$ with  $a=0.5$, $a=1$, or $a=10$ when $p=0.7$.

Recall that the \emph{$\ell_a^p$ pseudo-norm} $\|\cdot\|_{\ell_a^p}$
is defined by setting
$$
\|x\|_{\ell_a^p}:=\sum_{i=1}^N \left[\frac{(a+1)|x_i|}{a+|x_i|}\right]^p,
\quad\forall\,x\in\mathbb{R}^N,
$$
which is a special case of \cite[Definition 2.2]{YY23} with
$r:=a$, $m:=N$, and $n:=1$. To compare it with the $P_{a,p}$ function,
we plot their level lines when $p=0.7$, $a=5$ and $p=0.7$, $a=1$ in Figure \ref{fig-TLp}.
However, we find that it is hard to distinguish the degree
to approximate $\ell_0$ of these two functions from this figure.
Instead, we calculate its relaxation degree,
\begin{equation}\label{e2.9}
\mathrm{RD}_{\ell_a^p}=\frac{a }{(a+1)N^{\frac{1}{p}}-1}\sqrt{N}.
\end{equation}
Let us consider the case  $N=512$. In this case,
$$\mathrm{RD}_{\ell_5^{0.7}}\approx 2.5\times 10^{-3}
>\mathrm{RD}_{P_{5,0.7}}\approx 2.4\times 10^{-3},$$
while
$$\mathrm{RD}_{\ell_{1}^{0.7}}\approx 1.5\times 10^{-3}>
\mathrm{RD}_{P_{1,0.7}}\approx 1.1\times 10^{-3},$$
which clearly indicate that the $P_{a,p}$ penalty functions
$P_{5,0.7}$ and $P_{1,0.7}$ approach
$\ell_0$ more closely than $\ell_5^{0.7}$ and $\ell_{1}^{0.7}$, respectively.
Thus, the relaxation degree RD$_P$ can help us to quantify how closely
a penalty function $P$ approximates $\ell_0$. Moreover,
using \eqref{e2.9}, we find that both
\eqref{RDPap1} and \eqref{RDPap2} with $P_{a,p}$ therein replaced by $\ell_a^p$
still hold, which indicate that, as $a\to\infty$, the $\ell_a^p$ penalty function
approaches $P_p$ and, as $a\to 0^+$ or $p\to 0^+$, the $\ell_a^p$ penalty function
approaches $\ell_0$.

\begin{figure}
      \centering
  \begin{tabular}{cccc}
  \includegraphics[width=3.3cm]{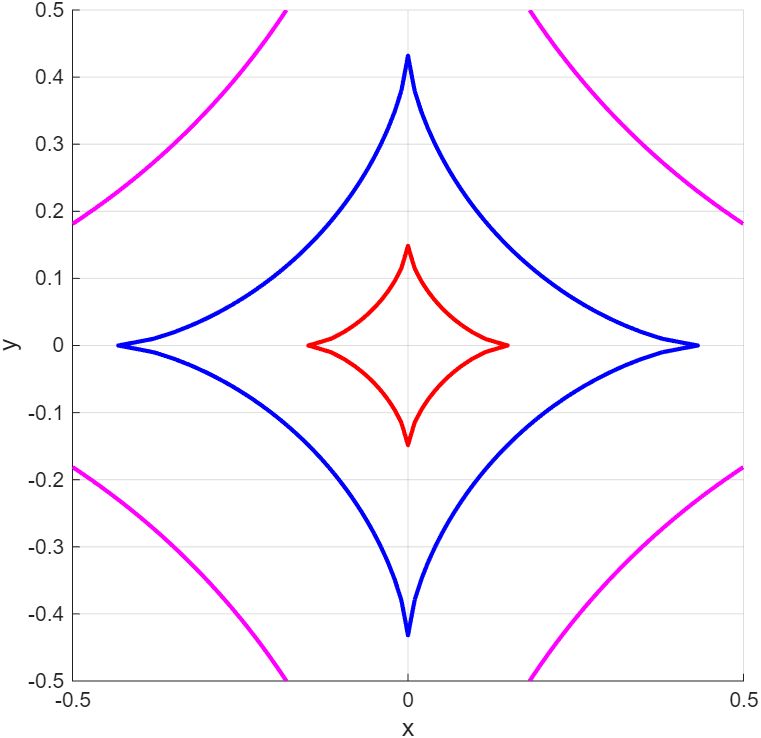}&
  \includegraphics[width=3.3cm]{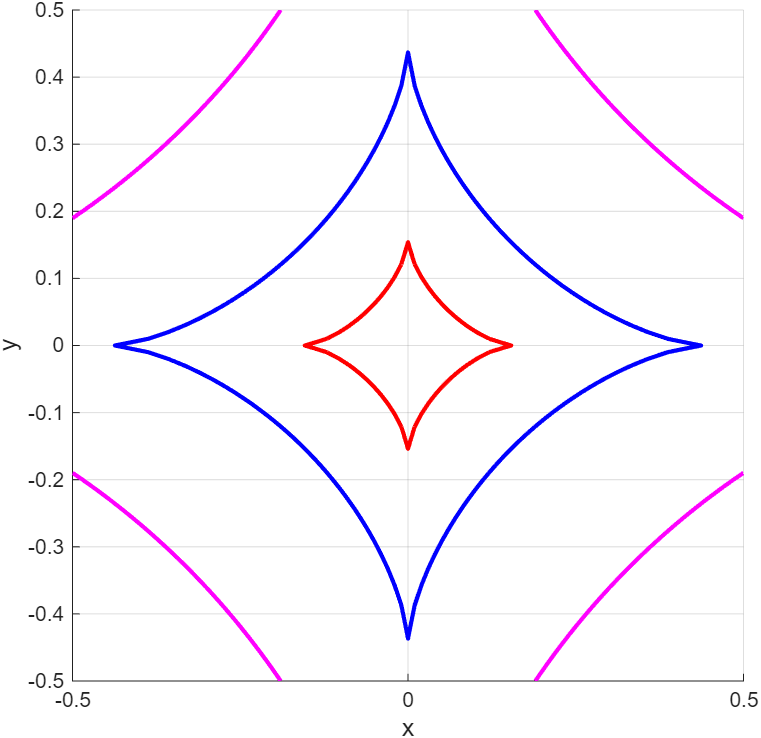}&
  \includegraphics[width=3.3cm]{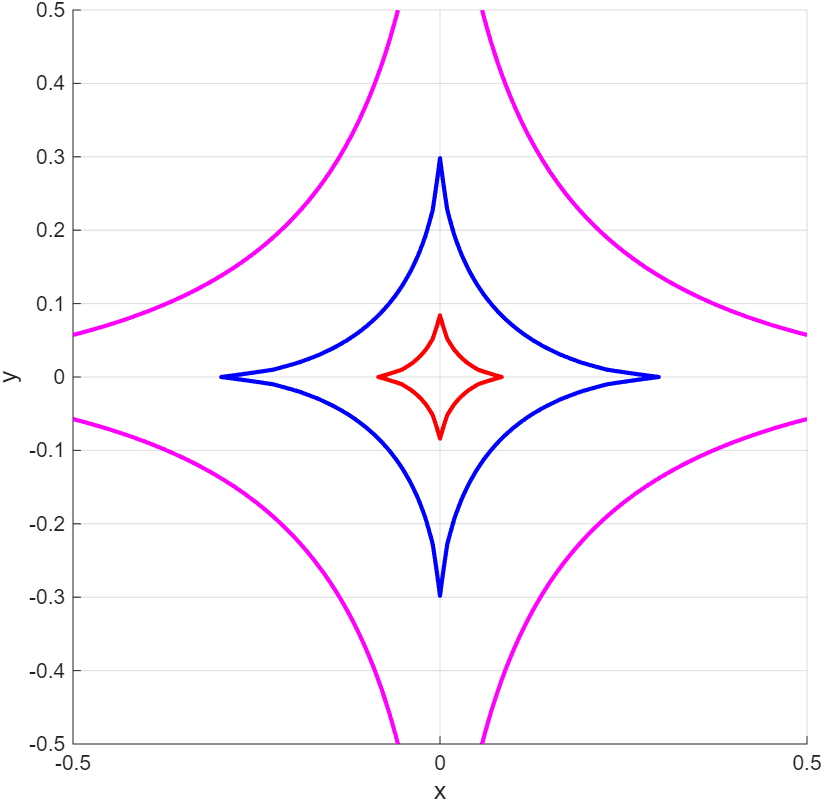}&
  \includegraphics[width=3.3cm]{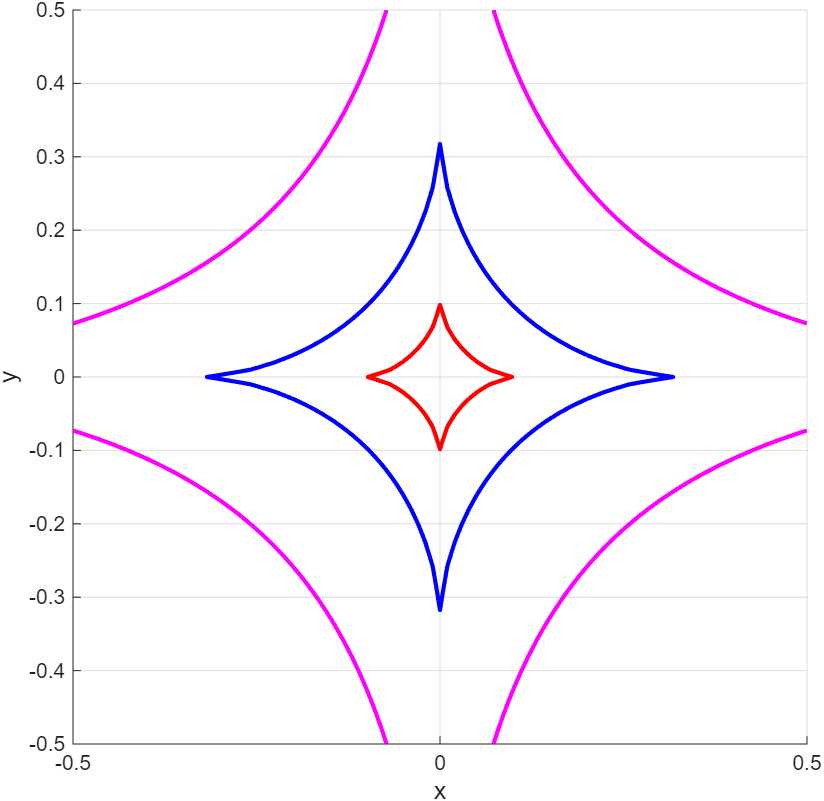}\\
    \quad$P_{5,0.7}$ & \quad $\ell_5^{0.7}$ & \quad $P_{1,0.7}$ & \quad$\ell_1^{0.7}$
    \end{tabular}
  \caption{Level lines of $P_{5,0.7}$, $\ell_5^{0.7}$, $P_{1,0.7}$, and  $\ell_1^{0.7}$}\label{fig-TLp}
\end{figure}

\subsection{$P_{a,p}$ Minimization Framework and Sparse Recovery}\label{subsec-main}

In this subsection, we
focus on the sparse recovery for $P_{a,p}$ minimization framework.
We begin with the following well-known  concept of the restricted
isometry property (RIP), which was introduced
by Cand\`{e}s and Tao \cite{CT05} and has become an essential tool in the study of sparse recovery.

\begin{definition}\label{def-RIP}
Let $A\in\mathbb{R}^{M\times N}$ be a matrix.
Then, for each given $s\in\mathbb{N}$ with $1\le s\le N$,
the matrix $A$ is said to have the  \emph{restricted isometry property} (RIP)
of order $s$ if there exists $\delta\in[0,1)$ such that,
for any $s$-sparse vector $x\in\mathbb{R}^{N}$,
\begin{equation}\label{eq-RIP}
(1-\delta)\left\|x\right\|_2^2\le\left\|Ax\right\|_2^2\le(1+\delta)\left\|x\right\|_2^2.
\end{equation}
The smallest $\delta$ satisfying \eqref{eq-RIP} is called the \emph{restricted isometry constant} (RIC)
and denoted by $\delta_s$. When $s$ is not an integer, $\delta_s$ is defined as $\delta_{\lceil s\rceil}$,
where $\lceil s\rceil$ denotes the smallest integer strictly bigger than $s$.
\end{definition}

It is known that many families of matrices, for instance,
the Gaussian or the Bernoulli random matrices,
have the RIP with a high probability (see \cite{BDDW08,CRT06b,CT05}).

Next, we establish the sparse recovery via the RIP condition of sensing matrices.

Due to the absence of the scaling property of the $P_{a,p}$ function,
we use the following normalization procedure.
Let $y\in\mathbb{R}^M$ and $\overline{x}$ be a feasible solution of \eqref{eq-Pl0}, not necessarily the optimal solution.
By Lemma \ref{lem-TLp,prop}(i),
there always exists a  constant $\beta\in[1,\infty)$ such that $P_{a,p}(\overline{x}/\beta)\le1$.
Moreover, if we let
\begin{equation}\label{eq-C}
\beta\ge a^{-\frac{1}{p}} \|\overline{x}\|_{\infty} \left[(a+1) |{\mathop\mathrm{\,supp\,}}(\overline{x}) |-1\right]^{\frac{1}{p}},
\end{equation}
then
$$
P_{a,p}\left(\frac{\overline{x}}{\beta}\right)\le\left|{\mathop\mathrm{\,supp\,}}(\overline{x})\right|\rho_{a,p}\left(\frac{\|\overline{x}\|_{\infty}}{\beta}\right)
=\left|{\mathop\mathrm{\,supp\,}}(\overline{x})\right|\frac{(a+1)\|\overline{x}\|_{\infty}^p}{a\beta^p +\|\overline{x}\|^p_{\infty}}\le1.
$$
Define
$$
y_\beta:=\frac{y}{\beta}\ \ \text{and}\ \ \overline{x}_\beta:=\frac{\overline{x}}{\beta}.
$$
Then $\overline{x}_\beta$ is a solution of the scaled constraint $Ax=y_\beta$.

Similarly, let $\widehat{x}$ be a feasible solution of \eqref{eq-Pl0epsilon},
$\beta$ be chosen such that
\begin{equation}\label{eq-C,epsilon}
\beta\ge a^{-\frac{1}{p}} \|\widehat{x}\|_{\infty} \left[(a+1)|{\mathop\mathrm{\,supp\,}}(\widehat{x})|-1\right]^{\frac{1}{p}},
\end{equation}
and define
$$
y_\beta:=\frac{y}{\beta},\ \ \widehat{x}_\beta:=\frac{\widehat{x}}{\beta},\ \ \text{and}\ \
\epsilon_\beta:=\frac{\epsilon}{\beta}.
$$
Then $\widehat{x}_\beta$ is a solution of the scaled constraint $\|Ax-y_\beta\|_2\le\epsilon_\beta$.

We also use the solutions of the following equation
\begin{equation}\label{eq-equation}
p\eta^{\frac{2}{p}}+2\eta-(2-p)\left(\frac{a+1}{a}\gamma\right)^{\frac{p}{2-p}}=0,
\end{equation}
where $\gamma\in[1,\infty)$ is a fixed parameter.
Note that the function $f(\eta):=p\eta^{\frac{2}{p}}+2\eta-(2-p)\left(\frac{a+1}{a}\gamma\right)^{\frac{p}{2-p}}$
is monotonously increasing on $[0,\infty)$, $f(0)<0$, and $f((1-\frac p2)(\frac{a+1}{a}\gamma)^{\frac{p}{2-p}})>0$.
There must exist a \emph{unique positive solution} of the equation \eqref{eq-equation} between $0$ and
$(1-\frac p2)(\frac{a+1}{a}\gamma)^{\frac{p}{2-p}}$, which
we always denote by $\eta_0$ in what follows.
We then let
\begin{equation}\label{e2.14}
\mu_0:= \left(\frac{a+1}{a}\gamma \right)^{-\frac{p}{2-p}}\eta_0,\ \
\delta(p,a,\gamma):=\frac{\mu_0}{2-p-\mu_0},
\ \ \text{and}\ \
\overline{\delta}:=\delta(p,a,1).
\end{equation}

Now, we let  $x^0$ be the $\ell_0$-minimizer of the constrained problem
 \eqref{eq-Pl0}, $x^0_\beta$ be the  $\ell_0$-minimizer of the normalized constrained $\ell_0$ minimization problem:
\begin{equation}\label{eq-Pl0,n}
\min_{x\in\mathbb{R}^N} \|x\|_0 \quad\text{subject to}\ \ Ax=y_\beta,
\end{equation}
and $x_\beta$ be the minimizer of the normalized constrained $P_{a,p}$ minimization problem:
\begin{equation}\label{eq-PTLp,n}
\min_{x\in\mathbb{R}^N} P_{a,p}(x) \quad\text{subject to}\ \ Ax=y_\beta,
\end{equation}
where $\beta$ satisfies \eqref{eq-C}.
We have the following exact $P_{a,p}$ sparse recovery.
\begin{theorem}\label{thm-exact}
Let $a\in(0,\infty)$ and $p\in(0,1]$ be fixed.
For any given sensing matrix $A\in\mathbb{R}^{M\times N}$ and $y\in\mathbb{R}^M$,
let $\beta$ satisfy \eqref{eq-C} and
$x^0_\beta\in\mathbb{R}^N$ be the  $\ell_0$-minimizer of \eqref{eq-Pl0,n}.
If $A$ satisfies the \emph{RIP} of order $2s$ for some $s\in\mathbb{N}$ with
$
\delta_{2s}<\overline{\delta},
$
then the minimizer $x_\beta$ of \eqref{eq-PTLp,n} is unique and $x_\beta =x^0_\beta$;
moreover, $\beta x_\beta$ is precisely the unique minimizer of the $\ell_0$ minimization problem \eqref{eq-Pl0}.
\end{theorem}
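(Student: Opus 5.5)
The plan is the standard null space route. I first reduce exact recovery to a weighted $\ell_p$ null space inequality. I then verify that inequality under $\delta_{2s}<\overline{\delta}$ by the sparse convex-combination technique of Zhang and Li \cite{ZL19}, carrying the extra factor $\frac{a+1}{a}$ (this is the case $\gamma=1$ in \eqref{eq-equation}).

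\textbf{Reduction.} Throughout I assume, as the statement implicitly does, that $x^0_\beta$ is $s$-sparse, and set $T:=\mathrm{supp}(x^0_\beta)$.
\begin{itemize}
\item By \eqref{eq-C}, $P_{a,p}(x^0_\beta)\le 1$. Hence any minimizer $x_\beta$ of \eqref{eq-PTLp,n} satisfies $P_{a,p}(x_\beta)\le 1$.
\item Since $\rho_{a,p}\ge 0$, each term satisfies $\rho_{a,p}((x_\beta)_i)\le 1$. Lemma \ref{lem-TLp,prop}(iii) then gives $|(x_\beta)_i|\le 1$, and so $\rho_{a,p}((x_\beta)_i)\ge |(x_\beta)_i|^p$.
\item Put $h:=x_\beta-x^0_\beta\in\mathrm{Ker}\,A$. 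On $T^\complement$ we have $h=x_\beta$, so
$$P_{a,p}(h_{T^\complement})\ge \|h_{T^\complement}\|_p^p.$$
\item On $T$, Lemma \ref{lem-TLp,prop}(vi) gives $P_{a,p}(x^0_\beta)-P_{a,p}((x_\beta)_T)\le P_{a,p}(h_T)$. Lemma \ref{lem-TLp,prop}(iii) gives $P_{a,p}(h_T)\le \frac{a+1}{a}\|h_T\|_p^p$.
\end{itemize}
Combining these,
$$P_{a,p}(x_\beta)\ge P_{a,p}(x^0_\beta)-\tfrac{a+1}{a}\|h_T\|_p^p+\|h_{T^\complement}\|_p^p .$$
So it suffices to prove the following: for every nonzero $h\in\mathrm{Ker}\,A$ and every $|T|\le s$,
$$\tfrac{a+1}{a}\|h_T\|_p^p<\|h_{T^\complement}\|_p^p. \qquad (*)$$
Granting $(*)$, if $h\neq\mathbf 0$ then $P_{a,p}(x_\beta)>P_{a,p}(x^0_\beta)$, which contradicts minimality. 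Hence $x_\beta=x^0_\beta$, and the same argument applied to any second minimizer gives uniqueness.

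\textbf{Proving $(*)$.} WLOG take $T$ to be the indices of the $s$ largest entries of $h$, which is the worst case for $(*)$. Suppose $(*)$ fails, i.e. $\|h_{T^\complement}\|_p^p\le \frac{a+1}{a}\|h_T\|_p^p$.
\begin{itemize}
\item This gives an $\ell_\infty$ bound on $h_{T^\complement}$ in terms of $\|h_T\|_p$ and an $\ell_p$ bound in terms of $\frac{a+1}{a}\|h_T\|_p^p$.
\item Following \cite{ZL19}, I split $h_{T^\complement}$ at a threshold parameter $\mu$. The large part goes into $h_T$ (or a slightly larger index set). The small part is written, via the $\ell_p$ version of the Cai--Zhang lemma, as a convex combination $\sum_i\lambda_i u_i$ of $s$-sparse vectors. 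These $u_i$ satisfy $\|u_i\|_2^2\le C(\mu)\,\|h_T\|_2^2$, where $C(\mu)$ depends on $\mu$, $p$ and the factor $(\frac{a+1}{a})^{\frac{p}{2-p}}$. The factor arises from the Hölder interpolation $\|\cdot\|_2\le\|\cdot\|_p^{p/2}\|\cdot\|_\infty^{1-p/2}$.
\item Using $\langle Ah_T+\sum_i\lambda_iAu_i,\,\cdot\,\rangle$ with $Ah=\mathbf 0$, I form the usual identity $\sum_i\lambda_i\|A(\beta_1 h_T+\beta_2 u_i)\|_2^2-\ldots=0$. Bounding it with $\delta_{2s}$ yields a quadratic inequality in $\|h_T\|_2$ that is impossible when $\delta_{2s}<\frac{\mu}{2-p-\mu}$.
\item Finally I optimize over $\mu$. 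The optimality condition is exactly \eqref{eq-equation} with $\gamma=1$ after the substitution $\mu_0=(\frac{a+1}{a})^{-\frac{p}{2-p}}\eta_0$, which produces $\overline{\delta}$ as in \eqref{e2.14}.
\end{itemize}
This contradiction establishes $(*)$.

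\textbf{Back to \eqref{eq-Pl0}.} Multiplying by $\beta$ maps feasible points of \eqref{eq-Pl0,n} bijectively onto those of \eqref{eq-Pl0} and preserves $\|\cdot\|_0$. Thus $\beta x^0_\beta$ is an $\ell_0$-minimizer of \eqref{eq-Pl0}. It is the unique one, because two distinct $s$-sparse solutions would differ by a nonzero $2s$-sparse kernel vector, which is impossible since $\delta_{2s}<\overline{\delta}<1$. Therefore $\beta x_\beta$ is the unique minimizer of \eqref{eq-Pl0}.

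\textbf{Main obstacle.} The hard step is the convex-combination estimate in the proof of $(*)$. The constant $\frac{a+1}{a}$ in the null space inequality must propagate through the $\ell_\infty$ and $\ell_p$ bounds exactly as the power $(\frac{a+1}{a})^{\frac{p}{2-p}}$, so that the optimization over $\mu$ reproduces \eqref{eq-equation}. I would first redo the Zhang--Li computation with $\ell_p$-mass ratio $\kappa:=\frac{a+1}{a}$ in place of $1$, and then check the scaling.
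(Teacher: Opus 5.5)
\textbf{There is a genuine gap: your target $(*)$ is false under $\delta_{2s}<\overline{\delta}$, so no version of the Zhang--Li computation can establish it.}

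\emph{Counterexample.} Take $p=1$ and $a=1$. Then \eqref{eq-equation} reads $\eta^2+2\eta-2=0$, so $\eta_0=\sqrt{3}-1$, $\mu_0=\eta_0/2$, and $\overline{\delta}=\mu_0/(1-\mu_0)=1/\sqrt{3}$. For $N\ge 3s$ let $h:=\sum_{i=1}^{3s}\vec{e}_i$ and $A:=\sqrt{3/2}\,(I_N-\frac{1}{3s}hh^{\top})$. Then $Ah=\mathbf{0}$. For every $2s$-sparse $x$ one has $\|Ax\|_2^2=\frac32(\|x\|_2^2-\frac{1}{3s}\langle x,h\rangle^2)$ with $\langle x,h\rangle^2\le 2s\|x\|_2^2$, so $\delta_{2s}\le\frac12<\overline{\delta}$. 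Yet for $T=\{1,\ldots,s\}$, a set of $s$ largest entries of $h$, we get $\frac{a+1}{a}\|h_T\|_1=2s=\|h_{T^\complement}\|_1$, so $(*)$ fails.

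\emph{Where the loss occurs.} You bound $P_{a,p}(h_T)\le\frac{a+1}{a}\|h_T\|_p^p$ and $P_{a,p}(h_{T^\complement})\ge\|h_{T^\complement}\|_p^p$ separately. In the convex-combination step this surfaces as follows. Besides $\|h_{T^\complement}\|_\infty\le\alpha$, Lemma \ref{lem-sparse repre} also needs $\alpha^p\ge\|h_{T^\complement}\|_p^p/s$. Under your cone this can be as large as $\frac{a+1}{a}\|h_T\|_p^p/s$. Carried through H\"{o}lder's inequality, the factor then enters as $(\frac{a+1}{a})^{2/(2-p)}$, not $(\frac{a+1}{a})^{p/(2-p)}$, and the resulting threshold is strictly below $\overline{\delta}$.

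Do not try to avoid this with \eqref{eq-moreover}: replacing $\alpha$ by $\|u\|_\infty$ there requires $\|u\|_p^p\le s\|u\|_\infty^p$. Indeed, for $u$ equal to $1$ on $n>s$ coordinates, every convex decomposition into $s$-sparse vectors has $\sum_i\lambda_i\|v_i\|_2^2\ge n^2/s$, whereas $\|u\|_\infty^p\|u\|_{2-p}^{2-p}=n$.

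\textbf{How to repair it.} The paper states its key step for the $P_{a,p}$-cone $P_{a,p}(u_{T^\complement})\le P_{a,p}(u_T)$ (Proposition \ref{prop-u,exact} with $\gamma=1$), not for a weighted $\ell_p$ cone. Keeping that cone is what repairs your argument.

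Minimality and Lemma \ref{lem-TLp,prop}(vi) already give you $P_{a,p}(h_{T_0^\complement})\le P_{a,p}(h_{T_0})$ for $T_0:=\mathrm{supp}(x^0_\beta)$. Let $T$ be a set of $s$ largest entries of $|h|$ and $d:=\|h_{T^\complement}\|_\infty$. Since $\rho_{a,p}$ is increasing and $|T_0|\le s$, you get $P_{a,p}(h_{T^\complement})\le P_{a,p}(h_T)$. Since $\rho_{a,p}(t)/|t|^p=\frac{a+1}{a+|t|^p}$ decreases in $|t|$ and $|h_i|\ge d$ on $T$, it follows that
\[
\|h_{T^\complement}\|_p^p\le\frac{a+d^p}{a+1}\,P_{a,p}(h_{T^\complement})\le\frac{a+d^p}{a+1}\,P_{a,p}(h_T)\le\|h_T\|_p^p .
\]
For $h\ne\mathbf{0}$ this unweighted cone is excluded by the result of \cite{ZL19}, namely the $\ell_p$ null space property under $\delta_{2s}<\delta(p)$.

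Moreover $\overline{\delta}<\delta(p)$. Indeed, $\delta(p,a,1)=[1+p\eta_0^{(2-p)/p}]^{-1}$, where $\eta_0$ increases with $\frac{a+1}{a}$ and equals the Zhang--Li root when $\frac{a+1}{a}$ is replaced by $1$.

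Equivalently, Lemma \ref{lem-sparse repre} now applies legitimately with $\alpha^p=\|h_T\|_p^p/s$. It gives $\sum_i\lambda_i\|v_i\|_2^2\le\Pi$ as in \eqref{eq-est1}, after which the remaining computation of Proposition \ref{prop-u,exact} yields the contradiction.

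This route needs neither $|(x_\beta)_i|\le 1$ nor $P_{a,p}(x^0_\beta)\le 1$. Incidentally, \eqref{eq-C} only gives $P_{a,p}(\overline{x}_\beta)\le 1$, not $P_{a,p}(x^0_\beta)\le 1$. Your final transfer to \eqref{eq-Pl0} is fine.
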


We also have the following two stable $P_{a,p}$ sparse recovery.

\begin{theorem}\label{thm-stable}
Let $a\in(0,\infty)$ and $p\in(0,1]$ be fixed.
For any given sensing matrix $A\in\mathbb{R}^{M\times N}$ and $y\in\mathbb{R}^M$,
let $x^0$ be the $\ell_0$-minimizer of \eqref{eq-Pl0},
 $\beta$ satisfy \eqref{eq-C}, and $x^0_\beta\in\mathbb{R}^N$ be the  $\ell_0$-minimizer of \eqref{eq-Pl0,n}.
If $A$ satisfies the \emph{RIP} of order $2s$ for some $s\in\mathbb{N}$ with
$
\delta_{2s}< \overline{\delta},
$
then the minimizer $x^\sharp_\beta$ of
the normalized \emph{$P_{a,p}$} minimization problem within  noisy measurements:
\begin{equation*}
\min_{x\in\mathbb{R}^N} P_{a,p}(x) \quad\text{subject to}\ \ \left\|Ax-y_\beta\right\|_2\le \epsilon
\end{equation*}
satisfies
\begin{equation}\label{eq-bound_0}
\left\|x^\sharp_\beta-x^0_\beta\right\|_2\le C_1 \epsilon;
\end{equation}
moreover,
\begin{equation*}
\left\|\beta x^\sharp_\beta-x^0\right\|_2\le C_1 \beta\epsilon,
\end{equation*}
where
\begin{equation*}
C_1:=\mu_0\sqrt{1+\left(\frac{a+1}{a}\right)^{\frac{2}{p}}}
\frac{\sqrt{1+\delta_{2s}}\left(1-\mu_0\right)
(2-p)+(2-p-\mu_0)\sqrt{(1-p)(\overline{\delta}-\delta_{2s})}}
{(2-p-\mu_0)^2(\overline{\delta}-\delta_{2s})} .
\end{equation*}
\end{theorem}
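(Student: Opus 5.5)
We follow the Cai--Zhang / Zhang--Li sparse convex-combination scheme, adapted to $\rho_{a,p}$ through the normalization. Set $h:=x^\sharp_\beta-x^0_\beta$ and let $T$ be the support of $x^0_\beta$. As in Theorem \ref{thm-exact}, we assume $|T|\le s$.

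\emph{Tube constraint.} Both $x^\sharp_\beta$ and $x^0_\beta$ satisfy $\|Ax-y_\beta\|_2\le\epsilon$, so $\|Ah\|_2\le 2\epsilon$. Here we need $\epsilon$ to be admissible for $x^0_\beta$; since $Ax^0_\beta=y_\beta$, this is automatic.

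\emph{Cone constraint.} Minimality gives $P_{a,p}(x^0_\beta+h)\le P_{a,p}(x^0_\beta)$. Applying Lemma \ref{lem-TLp,prop}(vi) coordinatewise yields
\begin{equation*}
P_{a,p}(h_{T^\complement})\le P_{a,p}(h_T).
\end{equation*}
The normalization \eqref{eq-C} gives $P_{a,p}(x^0_\beta)\le1$. Hence every coordinate of $x^0_\beta$, and of $x^\sharp_\beta$, has $\rho_{a,p}$-value at most $1$, so each such coordinate has modulus at most $1$ by Lemma \ref{lem-TLp,prop}(iii). Consequently the relevant entries of $h$ are bounded by $2$.

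\emph{Passing to $\ell_p$.} On this bounded range, Lemma \ref{lem-TLp,prop}(iii) and (v) convert $\rho_{a,p}$ into $|t|^p$ at the cost of the factor $\frac{a+1}{a}$. This gives an $\ell_p$-type cone condition
\begin{equation*}
\|h_{T^\complement}\|_p^p\le \frac{a+1}{a}\,\|h_T\|_p^p,
\end{equation*}
that is, the cone condition with $\gamma$ replaced by $\frac{a+1}{a}\gamma$, $\gamma=1$. This substitution is exactly where $\frac{a+1}{a}\gamma$ enters \eqref{eq-equation}.

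\emph{Sparse representation of the tail.} Following Zhang--Li \cite{ZL19}, we split $h_{T^\complement}$ by magnitude. The small entries are written, via the $\ell_p$ sparse convex-combination lemma, as a convex combination $\sum_i\lambda_i u_i$ of $s$-sparse vectors. These satisfy $\|u_i\|_2\lesssim$ (a quantity controlled by $\|h_T\|_2$ and $\mu_0$), with the threshold $\mu_0$ chosen so that $\eta_0$ solves \eqref{eq-equation}.

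\emph{RIP estimate.} For each $i$ we form the Cai--Zhang combinations $\beta_i=h_T+c\,u_i$. We then use the identity
\begin{equation*}
\sum_i\lambda_i\bigl\|A(\beta_i-\text{const}\cdot h)\bigr\|_2^2\ \text{ versus }\ \sum_i\lambda_i\|A(\ldots)\|_2^2
\end{equation*}
and apply RIP of order $2s$ to both sides. The free parameter is optimized exactly as in \cite{ZL19}, giving $\delta(p,a,1)=\mu_0/(2-p-\mu_0)$. In the noiseless case this yields a quadratic inequality in $\|h_T\|_2$ whose positive root is zero when $\delta_{2s}<\overline\delta$.

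\emph{Noisy bound.} With noise, the cross terms $\langle Ah,A\beta_i\rangle$ are bounded by $2\epsilon\sqrt{1+\delta_{2s}}\,\|\beta_i\|_2$. This produces a quadratic inequality $\alpha X^2-bX\epsilon-c\epsilon^2\le0$ in $X=\|h_T\|_2$ (or $\|h_{\max(s)}\|_2$). Solving it gives
\begin{equation*}
X\le\frac{b+\sqrt{b^2+4\alpha c}}{2\alpha}\,\epsilon,
\end{equation*}
which matches the structure of $C_1$: the first numerator term comes from $\sqrt{1+\delta_{2s}}$, the square root comes from the $(1-p)$ residual of the $\ell_p$ combination step, and the denominator is $(\overline\delta-\delta_{2s})$.

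\emph{Full error.} We then bound $\|h\|_2^2\le\|h_T\|_2^2+\|h_{T^\complement}\|_2^2$. The tail is controlled by the $\ell_p$ cone condition, which gives $\|h_{T^\complement}\|_2\le(\frac{a+1}{a})^{1/p}\|h_T\|_2$. This supplies the factor $\sqrt{1+(\frac{a+1}{a})^{2/p}}$ and yields \eqref{eq-bound_0}. Multiplying by $\beta$, and using $\beta x^0_\beta=x^0$ (the $\ell_0$ problem is scale invariant), gives the second inequality.

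The main obstacle is the conversion from the $\rho_{a,p}$ cone condition to the $\ell_p$ one with the exact constant $\frac{a+1}{a}$. Lemma \ref{lem-TLp,prop}(iii) only gives one-sided bounds: $\rho\ge|t|^p$ holds for $|t|\le1$, but entries of $h$ may reach $2$. I would first try using (v) with $c=1/2$ on $h/2$, or replacing $h_T$ by the entrywise bound through concavity, to keep the constant sharp. Carefully tracking the algebra of the Zhang--Li optimization to reproduce $C_1$ exactly is the second, more routine, difficulty.
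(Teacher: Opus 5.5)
Your strategy is the paper's. Its proof of this theorem only checks that $u:=x^\sharp_\beta-x^0_\beta$ meets the hypotheses of Proposition \ref{prop-u} with $\gamma=1$, and that proposition is the Zhang--Li convex-combination argument run with the $\ell_p$-cone constant $\frac{a+1}{a}\gamma$. As written, though, two of your steps do not give the stated bound.

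First, the tube constraint. You use $\|Ah\|_2\le 2\epsilon$. Everything afterwards is homogeneous in $\epsilon$, so this gives $\|h\|_2\le 2C_1\epsilon$ (the constant $C_2$ of Theorem \ref{thm-stable,ep}), not $C_1\epsilon$. You already noted $Ax^0_\beta=y_\beta$, so use $\|Ah\|_2\le\|Ax^\sharp_\beta-y_\beta\|_2+\|y_\beta-Ax^0_\beta\|_2\le\epsilon$.

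Second, $T=\mathrm{supp}(x^0_\beta)$ is the wrong index set for the analysis. Two estimates need $T$ to carry the $s$ largest entries of $h$:
\begin{itemize}
\item $\|h_{T^\complement}\|_\infty^p\le\|h_T\|_p^p/s$, which is what makes Lemma \ref{lem-sparse repre} produce the bound $\Pi$ with exponent $\frac{p}{2-p}$ on $\frac{a+1}{a}$;
\item the tail estimate $\|h_{T^\complement}\|_2\le(\frac{a+1}{a})^{1/p}\|h_T\|_2$ via Lemma \ref{lem-Tc<T}.
\end{itemize}
For a general $T$ the tail implication is false: take $s\ge 2$ equal entries $t$ on $T$ and a single entry $(\frac{a+1}{a}s)^{1/p}t$ off $T$. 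The paper instead passes to the set $T_1$ of the $s$ largest entries of $h$ and transfers both constraints. Since $\rho_{a,p}$ increases in $|t|$, you get $P_{a,p}(h_{T_1^\complement})\le P_{a,p}(h_{T^\complement})\le P_{a,p}(h_T)\le P_{a,p}(h_{T_1})$. Also $\|h_{T_1^\complement}\|_\infty\le\|h_{T^\complement}\|_\infty$, because one of the $s+1$ largest entries lies outside $T$. With this, no splitting of the tail by magnitude is needed. Moreover, $\mu_0$ is not a threshold: it is the chosen value of the coefficient $\mu$ in $w_i=h_{T_1}+\mu v_i$ in the identity \eqref{eq-equality}. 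That is the computation your ``RIP estimate'' paragraph leaves as a placeholder.

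The conversion you single out as the main obstacle is not one. Your suggested workaround, Lemma \ref{lem-TLp,prop}(v) with $c=1/2$, would only spoil the constant. The upper bound $\rho_{a,p}(t)\le\frac{a+1}{a}|t|^p$ in Lemma \ref{lem-TLp,prop}(iii) holds for every $t\in\mathbb{R}$, so entries of $h$ on the large index set may have any size. The lower bound $|t|^p\le\rho_{a,p}(t)$ is needed only on the complement, whose entries are at most $1$.

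That last fact comes from $P_{a,p}(x^\sharp_\beta)\le P_{a,p}(\overline{x}_\beta)\le 1$. This uses minimality of $x^\sharp_\beta$ and feasibility of $\overline{x}_\beta$ (since $A\overline{x}_\beta=y_\beta$), together with $h_{T^\complement}=(x^\sharp_\beta)_{T^\complement}$ and Lemma \ref{lem-TLp,prop}(iii). Note that \eqref{eq-C} is imposed relative to $\overline{x}$, not $x^0$, so your claim $P_{a,p}(x^0_\beta)\le 1$ is not available in general. Hence $\|h_{T_1^\complement}\|_p^p\le P_{a,p}(h_{T_1^\complement})\le P_{a,p}(h_{T_1})\le\frac{a+1}{a}\|h_{T_1}\|_p^p$ with no loss, exactly as in \eqref{eq-DSc1}.
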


\begin{theorem}\label{thm-stable,ep}
Let $a\in(0,\infty)$ and $p\in(0,1]$ be fixed.
For any given sensing matrix $A\in\mathbb{R}^{M\times N}$ and $y\in\mathbb{R}^M$,
let $x^{0,\epsilon}$ be an $\ell_0$-minimizer of \eqref{eq-Pl0epsilon},
 $\beta$ satisfy \eqref{eq-C,epsilon}, and $x^{0,\epsilon}_\beta:=\frac{x^{0,\epsilon}}{\beta}$ .
If $A$ satisfies the \emph{RIP} of order $2s$ for some $s\in\mathbb{N}$ with
$
\delta_{2s}< \overline{\delta},
$
then the minimizer $x^\sharp_\beta$ of
the normalized \emph{$P_{a,p}$} minimization problem within  noisy measurements:
\begin{equation*}
\min_{x\in\mathbb{R}^N} P_{a,p}(x) \quad\text{subject to}\ \ \left\|Ax-y_\beta\right\|_2\le \frac{\epsilon}{\beta}
\end{equation*}
satisfies
\begin{equation}\label{eq-bound_ep}
\left\|x^\sharp_\beta-x^{0,\epsilon}_\beta\right\|_2\le C_2 \frac{\epsilon}{\beta} ;
\end{equation}
moreover,
\begin{equation*}
\left\|\beta x^\sharp_\beta-x^{0,\epsilon}\right\|_2\le C_2 \epsilon,
\end{equation*}
where
\begin{equation*}
C_2:=2\mu_0\sqrt{1+\left(\frac{a+1}{a}\right)^{\frac{2}{p}}}
\frac{\sqrt{1+\delta_{2s}}\left(1-\mu_0\right)
(2-p)+(2-p-\mu_0)\sqrt{(1-p)(\overline{\delta}-\delta_{2s})}}
{(2-p-\mu_0)^2(\overline{\delta}-\delta_{2s})}.
\end{equation*}
\end{theorem}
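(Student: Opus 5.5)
The plan is to reduce Theorem \ref{thm-stable,ep} to the same stable-recovery estimate that underlies Theorem \ref{thm-stable}, applied to the error vector $h:=x^\sharp_\beta-x^{0,\epsilon}_\beta$. Note that $C_2=2C_1$, so the whole task is to show that $h$ satisfies the hypotheses of that estimate with noise level $2\epsilon/\beta$ instead of $\epsilon$.

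\textbf{Step 1 (feasibility and the measurement bound).} Since $\|Ax^{0,\epsilon}-y\|_2\le\epsilon$, dividing by $\beta$ shows that $x^{0,\epsilon}_\beta$ is feasible for the normalized noisy problem. By minimality of $x^\sharp_\beta$,
$$
P_{a,p}(x^\sharp_\beta)\le P_{a,p}\left(x^{0,\epsilon}_\beta\right)\le 1,
$$
where the last inequality is the normalization computation following \eqref{eq-C,epsilon}. The triangle inequality then gives $\|Ah\|_2\le 2\epsilon/\beta$; this is the sole source of the factor $2$.

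\textbf{Step 2 (cone condition).} Let $T:=\mathrm{supp}(x^{0,\epsilon}_\beta)$, which I take to satisfy $|T|\le s$ as in the setting of the theorem. I will use the separability of $P_{a,p}$ together with Lemma \ref{lem-TLp,prop}(vi), in the form $|\rho_{a,p}(t_1)-\rho_{a,p}(t_2)|\le\rho_{a,p}(t_1+t_2)$. Applied coordinatewise, this gives
$$
P_{a,p}(h_{T^\complement})\le P_{a,p}(h_T).
$$
Next I convert this into an $\ell_p$ cone condition. Because both $P_{a,p}(x^\sharp_\beta)\le1$ and $P_{a,p}(x^{0,\epsilon}_\beta)\le1$, every coordinate of the two vectors has $\rho_{a,p}$-value at most $1$. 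By Lemma \ref{lem-TLp,prop}(iii) each coordinate is therefore at most $1$ in absolute value, so on the support of these vectors we have the comparison $|t|^p\le\rho_{a,p}(t)\le\frac{a+1}{a}|t|^p$. Controlling the coordinates of $h$ in the same way should yield
$$
\|h_{T^\complement}\|_p^p\le \frac{a+1}{a}\,\gamma\,\|h_T\|_p^p
$$
for a suitable $\gamma\ge1$. Any extra rescaling loss from Lemma \ref{lem-TLp,prop}(v) that arises when coordinates of $h$ reach size up to $2$ is to be absorbed into $\gamma$, and the final constant is then stated with $\gamma=1$, i.e.\ through $\overline{\delta}$.

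\textbf{Step 3 (sparse convex combination and RIP).} With the cone condition in hand, I run the Zhang--Li argument \cite{ZL19} for $\ell_p$, with cone constant $\frac{a+1}{a}\gamma$ in place of $1$. Concretely, $h_{T^\complement}$ is written as a convex combination of $s$-sparse vectors with controlled $\ell_2$ and $\ell_\infty$ norms, following \cite{CZ14}. The RIP of order $2s$ is then applied to the combinations $h_T+\mu u_i$, and the parameter $\mu$ is optimized; this optimization is exactly where equation \eqref{eq-equation} and the quantities $\eta_0$, $\mu_0$ and $\delta(p,a,\gamma)$ appear. The result is
$$
\|h_T\|_2\le C\,\|Ah\|_2
\quad\text{whenever } \delta_{2s}<\overline{\delta}.
$$
The constant $C$ carries the denominator $(2-p-\mu_0)^2(\overline{\delta}-\delta_{2s})$ and a term involving $\sqrt{(1-p)(\overline{\delta}-\delta_{2s})}$; the latter comes from the quadratic inequality solved in the last step.

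\textbf{Step 4 (tail and rescaling).} The tail is bounded by $\|h_{T^\complement}\|_2\le\left(\frac{a+1}{a}\right)^{1/p}\|h_T\|_2$, using the cone condition together with the $\ell_p$--$\ell_2$ comparison for decreasing rearrangements. Hence $\|h\|_2\le\sqrt{1+\left(\frac{a+1}{a}\right)^{2/p}}\,\|h_T\|_2$, which combined with $\|Ah\|_2\le2\epsilon/\beta$ gives \eqref{eq-bound_ep}. Multiplying through by $\beta$ gives the unnormalized bound $\|\beta x^\sharp_\beta-x^{0,\epsilon}\|_2\le C_2\epsilon$.

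I expect Step 2 to be the main obstacle, namely passing from the $P_{a,p}$-level cone inequality to a clean $\ell_p$ cone inequality with the right constant. The function $\rho_{a,p}$ has no scaling property, and the comparison with $|t|^p$ holds only for $|t|\le1$. The normalization by $\beta$, which gives $P_{a,p}\le1$ for both vectors, is precisely what makes the comparison available. The delicate point is bookkeeping the loss when entries of $h$ may reach size up to $2$; if a crude bound fails, the first thing I would try is working with $\rho_{a,1}(|h_i|^p)$ directly via Lemma \ref{lem-TLp,prop}(i).
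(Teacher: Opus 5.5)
Your plan is essentially the paper's proof: for $u=x^\sharp_\beta-x^{0,\epsilon}_\beta$ the paper checks the tube bound $\|Au\|_2\le 2\epsilon/\beta$, the cone inequality $P_{a,p}(u_{T^\complement})\le P_{a,p}(u_T)$ and $\|u_{T^\complement}\|_\infty\le 1$, and then applies Proposition \ref{prop-u} with $\gamma=1$, whose proof is exactly your Steps 2--4. The step you flag as the obstacle costs nothing, but two points need fixing. First, $h_{T^\complement}=(x^\sharp_\beta)_{T^\complement}$ has entries of modulus at most $1$, while $\rho_{a,p}(t)\le\frac{a+1}{a}|t|^p$ holds for every $t$ (Lemma \ref{lem-TLp,prop}(iii)); hence $\|h_{T^\complement}\|_p^p\le P_{a,p}(h_{T^\complement})\le P_{a,p}(h_T)\le\frac{a+1}{a}\|h_T\|_p^p$ with $\gamma=1$ exactly, which matters because a genuine loss $\gamma>1$ could not simply be dropped, since $\delta(p,a,\gamma)$ decreases in $\gamma$. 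Second, before Steps 3--4 you must, as the paper does, replace $T=\mathrm{supp}(x^{0,\epsilon}_\beta)$ by the index set of the $s$ largest entries of $h$ (the $\ell_p$-cone inequality transfers to that set), because both the $\ell_\infty$ control $\|h_{T^\complement}\|_\infty^p\le\|h_T\|_p^p/s$ in the convex decomposition and the tail bound via Lemma \ref{lem-Tc<T} require that ordering.
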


\begin{remark}
\begin{enumerate}
\item[\rm(i)]
If $\delta_{2s}<1$ and  the true signal is $s$-sparse for some $s\in\mathbb{N}$,
then the $\ell_0$-minimizer is exactly the true signal.
Indeed, if we let $x^*$ be the true signal and $x^0$ be an
$\ell_0$-minimizer, then we have
$
\|x^0\|_0\le\|x^*\|_0\le s,
$
which implies that $x^0-x^*$ is $2s$-sparse, and hence, by the RIP,
$$
0=\left\|A\left(x^0-x^*\right)\right\|_2^2\ge (1-\delta_{2s})\left\| x^0-x^* \right\|_2^2.
$$
Thus, $x^0=x^*$.

\item[\rm(ii)]
We  compare the RIP condition here with some known ones.
When $p=1$, the $P_{a,p}$ function reduces to the TL1 function.
One can solve the equation \eqref{eq-equation} to obtain $\eta_0=\sqrt{1+\frac{a+1}{a} }-1$
and hence the RIP condition  becomes $\delta_{2s}<\frac{1}{\sqrt{1+\frac{a+1}{a} }}$.
Recall that the  RIP condition in \cite[Theorems 2.2 and 2.3]{ZX18} is that
$\delta_R +(\frac{a}{a+1})^2\frac{R}{s}\delta_{R+s}<(\frac{a}{a+1})^2\frac{R}{s}-1$,
where $R>s$. To guarantee the validity of this condition,
we must have $R>(\frac{a+1}a)^2s$ and hence, if $a\le 1$, then $R>4s$.
In this sense, the RIP conditions in
Theorems \ref{thm-exact}, \ref{thm-stable}, and \ref{thm-stable,ep}
are weaker than the corresponding ones in \cite[Theorems 2.2 and 2.3]{ZX18}.
On the other hand, as $a\to\infty$, $\eta_0$ tends to
the unique positive solution of the equation
\begin{equation*}
\frac{p}{2}\eta^{\frac{2}{p}}+\eta-1+\frac{p}{2}=0
\end{equation*}
and hence the upper bound $\overline \delta$ in \eqref{e2.14} of the RIP condition tends
to the sharp RIP upper bound for $\ell_p$ minimization recovery
given by Zhang and Li \cite[Theorems 1.2, 1.3, and 1.4]{ZL19} and, especially when $p=1$,
to the sharp bound $\frac{\sqrt{2}}{2}$ for the classical $\ell_1$ minimization
established by Cai and Zhang in \cite{CZ14}. Thus, in this sense, the upper bound
$\overline \delta$ of the RIP condition in Theorems \ref{thm-exact},
\ref{thm-stable}, and \ref{thm-stable,ep} is sharp.

\item[\rm(iii)] As $a\to\infty$, the constants $C_1$
in \eqref{eq-bound_0} and $C_2$ in \eqref{eq-bound_ep}
can tend respectively to the one in the case $\rho=0$
and the one in the case $\rho=\epsilon$ therein of the stable recovery
via $\ell_p$ minimization with $p\in (0,1]$ given in \cite[Theorem 1.3]{ZL19},
which when $p=1$ also tend to the ones in \cite[Theorem 2.1]{CZ14}.
\end{enumerate}
\end{remark}

\subsection{Proofs of Theorems \ref{thm-exact}, \ref{thm-stable},
and \ref{thm-stable,ep}}\label{subsec-proof}

This subsection is devoted to proving the main results in Subsection \ref{subsec-main}.

Theorem \ref{thm-exact} is based on the following proposition.

\begin{proposition}\label{prop-u,exact}
Let $a\in(0,\infty)$, $p\in(0,1]$ be fixed, $\gamma\ge 1$,
and $A\in\mathbb{R}^{M\times N}$ be a matrix
satisfying the \emph{RIP} of order $2s$ with
$\delta_{2s}<\delta(p,a,\gamma)$
for some $s\in\mathbb{N}$. For any $u\in \mathrm{Ker}\, A$,
if there exists some index set $S$ with $|S|\le s$ such that $\|u_{S^\complement}\|_{\infty}\le 1$
and $u$ satisfies the $P_{a,p}$-cone constraint:
$$
P_{a,p}\left(u_{S^\complement}\right)\le \gamma P_{a,p}\left(u_{S}\right),
$$
then $u=\mathbf{0}$.
\end{proposition}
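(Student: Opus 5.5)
We argue by contradiction: assume $u\neq\mathbf{0}$. The plan is to transfer the problem to the $\ell_p$ setting of Zhang and Li \cite{ZL19} and then run the sparse convex-combination argument of Cai and Zhang \cite{CZ14}.

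\textbf{Step 1: from the $P_{a,p}$-cone to an $\ell_p$-type cone.} On $S^\complement$ we have $\|u_{S^\complement}\|_\infty\le1$, so Lemma \ref{lem-TLp,prop}(iii) gives $\rho_{a,p}(u_i)\ge|u_i|^p$ for $i\in S^\complement$. On $S$ we use $\rho_{a,p}(u_i)\le\frac{a+1}{a}|u_i|^p$. Together with the cone constraint this yields
$$
\|u_{S^\complement}\|_p^p\le P_{a,p}(u_{S^\complement})\le\gamma P_{a,p}(u_S)\le \frac{a+1}{a}\gamma\,\|u_S\|_p^p.
$$
Write $\kappa:=\frac{a+1}{a}\gamma\ge1$. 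We may enlarge $S$ to the index set of the $s$ largest entries of $|u|$ (keeping $\|u_{S^\complement}\|_\infty\le 1$ is not needed afterwards). Doing so only strengthens the inequality, so $\|u_{\max(s)^\complement}\|_p^p\le\kappa\|u_{\max(s)}\|_p^p$.

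\textbf{Step 2: the sparse convex-combination argument.} Set $T:=\max(s)$, $h:=u$, and let $\lambda$ be a threshold. Following \cite{ZL19}, we split the tail into entries above and below an $\alpha$-level. We bound $\|h_{T^\complement}\|_\infty$ by $(\|h_T\|_p^p/s)^{1/p}$ and $\|h_{T^\complement}\|_p^p\le\kappa\|h_T\|_p^p$. Then we write the tail, rescaled by a parameter $\mu$, as a convex combination of $s$-sparse vectors via the sparse representation lemma of \cite{CZ14}/\cite{xx13} in its $\ell_p$ form: a vector $v$ with $\|v\|_\infty\le\alpha$ and $\|v\|_p^p\le s\alpha^p$ lies in the convex hull of $s$-sparse vectors $w$ with $\|w\|_2^2\le$ a suitable bound. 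The bound on $\|w\|_2$ in terms of $\|h_T\|_2$ comes from Hölder, using $\|h_T\|_p^p\le s^{1-p/2}\|h_T\|_2^p$; the factor $\kappa$ enters as $\kappa^{1/p}$ in the $\ell_\infty/\ell_p$ scales. Applying the identity $\|A(h_T+\mu w_i)\|_2^2$ and averaging with the RIP of order $2s$ gives
$$
0\ge \bigl[(1-\delta_{2s})\text{-term}-(1+\delta_{2s})\text{-term}\bigr]\|h_T\|_2^2 .
$$
With $\mu$ optimized, the bracket is positive exactly when $\delta_{2s}<\mu_0/(2-p-\mu_0)$. Here $\mu_0=\kappa^{-p/(2-p)}\eta_0$ and $\eta_0$ solves \eqref{eq-equation}, which is the Zhang--Li equation with $1$ replaced by $\kappa^{p/(2-p)}$ after the rescaling $\eta=\kappa^{p/(2-p)}\mu$. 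This forces $h_T=\mathbf{0}$, so $\|h_{T^\complement}\|_p^p\le\kappa\|h_T\|_p^p=0$, contradicting $u\neq\mathbf 0$.

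\textbf{Main obstacle.} The main difficulty is the bookkeeping in Step 2. We must check that the Zhang--Li optimization, run with cone constant $\kappa$ instead of $1$, produces precisely the root equation \eqref{eq-equation} and the threshold $\delta(p,a,\gamma)$. I would first redo the $\gamma=1$ computation of \cite{ZL19} with $\kappa$ inserted, tracking how $\kappa$ scales the $\ell_\infty$ bound ($\kappa^{1/p}$) and the $\ell_p^p$ mass (factor $\kappa$). I would then choose the substitution $\eta=\kappa^{p/(2-p)}\mu$ so that the stationarity condition in $\mu$ becomes \eqref{eq-equation}.
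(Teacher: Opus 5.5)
Your Step 1 is correct and is the paper's own reduction: with your $\kappa=\frac{a+1}{a}\gamma$ and $T$ the top-$s$ set, $\|u_{T^\complement}\|_p^p\le\kappa\|u_T\|_p^p$ and $\|u_{T^\complement}\|_\infty^p\le\|u_T\|_p^p/s$.

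\textbf{The gap is in Step 2.} You postpone the ``bookkeeping'' and propose to fix it by choosing the substitution $\eta=\kappa^{p/(2-p)}\mu$ so that \eqref{eq-equation} comes out. Done honestly, it does not come out. As you say yourself, $\kappa$ enters the $\ell_\infty/\ell_p$ scale. So Lemma \ref{lem-sparse repre} must be applied with $\alpha^p=\kappa\|u_T\|_p^p/s$, which is what $\|u_{T^\complement}\|_p^p\le s\alpha^p$ requires in the worst case. H\"older then gives
\[
\sum_i\lambda_i\|v_i\|_2^2\le\alpha^p\|u_{T^\complement}\|_{2-p}^{2-p}\le\kappa^{\frac{2}{2-p}}\left(\|u_T\|_2^2\right)^{\frac{p}{2-p}}\left(\|u_{T^\complement}\|_2^2\right)^{\frac{2-2p}{2-p}} .
\]
The Zhang--Li optimization then yields \eqref{eq-equation} with the exponent $\frac{p}{2-p}$ on $\frac{a+1}{a}\gamma$ replaced by $\frac{2}{2-p}$. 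Since the threshold decreases in that constant, you get a threshold strictly below $\delta(p,a,\gamma)$; for $p=1$ it is $1/\sqrt{1+\kappa^2}$ instead of $1/\sqrt{1+\kappa}$.

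This loss cannot be avoided. Suppose $z=\sum_i\lambda_iv_i$ with $s$-sparse $v_i$, and set $\theta_j:=\sum_{i:\,(v_i)_j\neq0}\lambda_i$. Then $\sum_j\theta_j\le s$, and Cauchy--Schwarz applied twice gives
\[
\sum_i\lambda_i\|v_i\|_2^2\ge\sum_{z_j\neq0}z_j^2/\theta_j\ge\|z\|_1^2/s .
\]
The paper obtains the smaller factor only through \eqref{eq-moreover}, i.e.\ the Remark after Lemma \ref{lem-sparse repre} that replaces $\alpha$ by $\|u_{T^\complement}\|_\infty$. The bound above shows that Remark is false for every flat vector with $n>s$ nonzero entries: it would give $n\|z\|_\infty^2$, while any representation needs at least $n^2\|z\|_\infty^2/s$.

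Separately, your sketch of Step 2 is not quite Zhang--Li's argument. They use the averaged bound in the mixed form above, not a uniform bound on each $\|w\|_2$ by $\|h_T\|_2$. They also use the $\frac{1-p}{2}$ cross term in \eqref{eq-equality} and optimize jointly in $\mu$ and $\|u_{T^\complement}\|_2/\|u_T\|_2$. No splitting of the tail is needed.

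\textbf{The statement is false for large $\gamma$}, so no bookkeeping can close the gap. Take $p=1$, $a=1$, $\gamma=10$, $s=1$, so that $\kappa=20$ and $\delta(1,1,10)=1/\sqrt{21}\approx0.218$. Define:
\begin{itemize}
\item $h:=(1,0.9,\ldots,0.9)\in\mathbb{R}^{12}$;
\item $r:=1.81/9.91$, the largest share of $\|h\|_2^2$ carried by two coordinates;
\item $\delta:=r/(2-r)\approx0.1005$;
\item $A:=\sqrt{1+\delta}\,(I-hh^T/\|h\|_2^2)$.
\end{itemize}
Then $\|Ax\|_2^2=(1+\delta)(\|x\|_2^2-\langle h,x\rangle^2/\|h\|_2^2)$. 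On $2$-sparse $x$ the ratio $\|Ax\|_2^2/\|x\|_2^2$ fills $[(1+\delta)(1-r),1+\delta]=[1-\delta,1+\delta]$, so $\delta_2=\delta<\delta(1,1,10)$. Yet $u:=h/10\in\mathrm{Ker}\,A$ is nonzero. With $S=\{1\}$ it satisfies $\|u_{S^\complement}\|_\infty=0.09$ and $P_{1,1}(u_{S^\complement})=1.98/1.09<2/1.1=10\,P_{1,1}(u_S)$, so all hypotheses hold.

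\textbf{What is true.} Since $\rho_{a,1}(x)/x=\frac{a+1}{a+x}$ is decreasing, the $P_{a,p}$-cone for the top-$s$ set $T$ already gives $\|u_{T^\complement}\|_p^p\le\gamma\|u_T\|_p^p$. The constant is $\gamma$, not $\kappa$, and this does not use $\|u_{S^\complement}\|_\infty\le1$. The Zhang--Li argument then proves the proposition with $(\frac{a+1}{a}\gamma)^{p/(2-p)}$ in \eqref{eq-equation} replaced by $\gamma^{2/(2-p)}$. For $\gamma=1$ this is Zhang--Li's bound $\delta(p)>\overline{\delta}$, so the case $\gamma=1$ needed in Theorem \ref{thm-exact} is unaffected.
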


To prove Proposition \ref{prop-u,exact}, we need the following
two lemmas. The former is the key sparse
convex-combination technique given in \cite[Lemma 2.2]{ZL19} and the latter is part of \cite[Lemma 5.3]{CZ13}.

\begin{lemma}\label{lem-sparse repre}
Let $p\in(0,1]$, $\alpha\in(0,\infty)$, and $s\in\mathbb{N}$. For any given $u\in\mathbb{R}^N$ with $|{\mathop\mathrm{\,supp\,}}(u)|=n\ge s$,
$\|u\|_p^p\le s\alpha^p$, and $\|u\|_\infty\le \alpha$,
$u$ can be represented as a convex combination of finite $s$-sparse vectors,
$$
u=\sum_{i=1}^L \lambda_i v_i\ \mathrm{for\ some}\ L \in\mathbb{N},
$$
where $\sum_{i=1}^L  \lambda_i =1$ with $\lambda_i\in(0,1]$ and $v_i$ is $s$-sparse with
${\mathop\mathrm{\,supp\,}}(v_i)\subset {\mathop\mathrm{\,supp\,}}(u)$.
Moreover,
\begin{equation}\label{eq-decom,est}
\sum_{i=1}^L  \lambda_i \left\|v_i\right\|_2^2\le
\min\left\{\frac{n}{s}\left\|u\right\|_2^2,\alpha^p\left\|u\right\|_{2-p}^{2-p}\right\}.
\end{equation}
\end{lemma}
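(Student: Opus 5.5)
The plan is to build two separate convex representations of $u$ and, for the given $u$, use whichever one yields the smaller value in \eqref{eq-decom,est}. Each representation will consist of $s$-sparse vectors supported inside $\mathrm{supp}(u)$, with weights in $(0,1]$ summing to $1$. We read the lemma as asserting only that \emph{some} such representation achieves the minimum, so it suffices to exhibit one representation achieving each of the two quantities. Throughout, write $T_u:=\mathrm{supp}(u)$, so $|T_u|=n\ge s$.

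\emph{First representation (the bound $\frac ns\|u\|_2^2$).} Average uniformly over all $s$-subsets $T\subset T_u$. Put $\lambda_T:=\binom ns^{-1}$ and $v_T:=\frac ns\,u_T$. Each index $j\in T_u$ lies in exactly a fraction $\frac sn$ of these subsets. Hence
$$\sum_T\lambda_T v_T=\frac ns\cdot\frac sn\,u=u$$
and
$$\sum_T\lambda_T\|v_T\|_2^2=\Big(\frac ns\Big)^2\frac sn\|u\|_2^2=\frac ns\|u\|_2^2.$$
The $v_T$ are $s$-sparse with support in $T_u$, and $L=\binom ns$ is finite. This step is elementary.

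\emph{Second representation (the bound $\alpha^p\|u\|_{2-p}^{2-p}$).} Set $z_j:=|u_j|^p/\alpha^p$ for $j\in T_u$. The hypotheses $\|u\|_\infty\le\alpha$ and $\|u\|_p^p\le s\alpha^p$ give
$$z\in Q:=\Big\{z\in[0,1]^{T_u}:\ \sum_j z_j\le s\Big\}.$$
The key structural fact is that the vertices of $Q$ are the $0/1$ indicator vectors $\chi_T$ with $T\subset T_u$ and $|T|\le s$. This is the step I expect to need the most care. I would prove it either by noting that the constraint matrix is totally unimodular with integral right-hand side, or directly: at a vertex, $n$ linearly independent constraints are active. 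If some coordinate were fractional, the constraint $\sum_j z_j=s$ would have to be active, and with $s$ an integer there must then be at least two fractional coordinates. Moving mass between two of them, in opposite directions, keeps $z$ in $Q$, contradicting extremality.

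Since $Q$ is a polytope, $z$ is a convex combination of its vertices. Carathéodory's theorem gives a finite combination $z=\sum_{i=1}^L\lambda_i\chi_{T_i}$, and discarding zero weights leaves $\lambda_i\in(0,1]$ with $\sum_i\lambda_i=1$; the empty set is allowed, giving the zero vector, which is trivially $s$-sparse. Now define $v_i$ by
$$(v_i)_j:=\alpha^p\,\mathrm{sgn}(u_j)|u_j|^{1-p}\quad\text{for }j\in T_i,\qquad (v_i)_j:=0\ \text{otherwise}.$$
Then $v_i$ is $s$-sparse with support in $T_u$, and for every $j$,
$$\sum_i\lambda_i(v_i)_j=\alpha^p\,\mathrm{sgn}(u_j)|u_j|^{1-p}z_j=u_j .$$
Moreover,
$$\sum_i\lambda_i\|v_i\|_2^2=\sum_{j\in T_u}\alpha^{2p}|u_j|^{2-2p}z_j=\alpha^p\sum_j|u_j|^{2-p}=\alpha^p\|u\|_{2-p}^{2-p}.$$

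\emph{Conclusion.} Choosing whichever of the two representations gives the smaller value yields \eqref{eq-decom,est}.
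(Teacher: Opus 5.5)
Your proof is correct. The paper does not prove this lemma. It imports it from \cite[Lemma 2.2]{ZL19}, i.e.\ from the Cai--Zhang sparse convex-combination technique as extended to $\ell_p$, so your argument is a self-contained substitute rather than a variant of an in-paper proof.

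Both of your ingredients check out. Averaging $\frac ns u_T$ uniformly over all $s$-subsets $T\subset\mathrm{supp}(u)$ gives exactly $\frac ns\|u\|_2^2$. For the second bound you use the integrality of $\{z\in[0,1]^n:\ \sum_j z_j\le s\}$; either of your two justifications (the extreme-point argument or total unimodularity) works. You apply it to $z_j=|u_j|^p/\alpha^p$ and rescale by $(v_i)_j=\alpha^p\,\mathrm{sgn}(u_j)|u_j|^{1-p}$ on $T_i$, which gives exactly $\alpha^p\|u\|_{2-p}^{2-p}$. Taking the better of the two representations is precisely what the existence claim with the $\min$ requires.

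The citation buys brevity. Your route buys a short, fully checkable proof in which both bounds are attained as identities. It also shows that the hypothesis $\|u\|_p^p\le s\alpha^p$ enters exactly as the constraint $\sum_j z_j\le s$.

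That last feature matters for the remark following the lemma. Your construction supports replacing $\alpha$ by $\|u\|_\infty$ in \eqref{eq-decom,est} only when $\|u\|_p^p\le s\|u\|_\infty^p$, and without that condition the replacement is false. Take $p=1$, $s=1$, $u=(1,1)$, $\alpha=2$, which satisfy the lemma's hypotheses. Let $m_j$ be the total weight of those $v_i$ supported at coordinate $j$. Then $m_1+m_2\le 1$, and Cauchy--Schwarz gives $\sum_i\lambda_i\|v_i\|_2^2\ge\frac1{m_1}+\frac1{m_2}\ge 4>2=\|u\|_\infty\|u\|_1$ for every admissible representation.
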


\begin{remark}
According to the proof of \cite[Lemma 2.2]{ZL19},
\eqref{eq-decom,est} still holds if $\alpha$ therein is replaced by $\|u\|_\infty$.
\end{remark}

\begin{lemma}\label{lem-Tc<T}
Let $k,l\in\mathbb{N}$ satisfy $k\le l$ and $a_1\ge a_2\ge\ldots\ge a_l\ge0$.
If $\sum_{j=1}^k a_j\ge \sum_{j=k+1}^l a_j$, then, for any $\alpha\ge 1$,
$\sum_{j=1}^k a_j^\alpha\ge \sum_{j=k+1}^l a_j^\alpha.$
\end{lemma}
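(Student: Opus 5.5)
The plan is a one-line comparison through the threshold value $t:=a_k$, which is the smallest of the first $k$ terms and the largest of the remaining ones. The case $k=l$ is trivial, since the right-hand sides are empty sums, so assume $k<l$.

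First I would handle the degenerate case $t=0$. Then $a_j=0$ for all $j\ge k$ by monotonicity, so $\sum_{j=k+1}^l a_j^\alpha=0$ and the claim holds.

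For $t>0$, I would use the map $x\mapsto x^{\alpha-1}$, which is nondecreasing on $[0,\infty)$ because $\alpha\ge 1$. This is the only place the hypothesis $\alpha\ge1$ enters. For $j>k$ we have $0\le a_j\le t$, hence $a_j^\alpha=a_j^{\alpha-1}a_j\le t^{\alpha-1}a_j$. For $j\le k$ we have $a_j\ge t$, hence $a_j^\alpha\ge t^{\alpha-1}a_j$. Chaining these with the assumed inequality of sums gives
\begin{align*}
\sum_{j=k+1}^l a_j^\alpha\le t^{\alpha-1}\sum_{j=k+1}^l a_j\le t^{\alpha-1}\sum_{j=1}^k a_j\le \sum_{j=1}^k a_j^\alpha ,
\end{align*}
which is the desired conclusion.

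I do not expect a genuine obstacle. The only points needing care are the convention $0^0=1$ when $\alpha=1$, which is harmless since then the statement is the hypothesis itself, and the separate treatment of $t=0$ when $\alpha>1$ and the first estimate is trivially an equality $0\le0$.
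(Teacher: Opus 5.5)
Your proof is correct. The threshold $t=a_k$ separates the two blocks. The pointwise bounds $a_j^\alpha\le t^{\alpha-1}a_j$ for $j>k$ and $a_j^\alpha\ge t^{\alpha-1}a_j$ for $j\le k$ both hold; the first is trivial when $a_j=0$, so once $t>0$ you do not need any $0^0$ convention. Multiplying the hypothesis by $t^{\alpha-1}\ge0$ then closes the chain.

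The paper gives no argument of its own here: it imports the statement as part of \cite[Lemma 5.3]{CZ13}. So you supply a short self-contained proof where the paper relies on a citation. Your version also fits the way the lemma is actually used better than the stated form does.

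In the proof of Proposition~\ref{prop-u} the lemma is invoked with $\alpha=2/p$ and $a_j:=r(u)_j^p$. The hypothesis available there is the cone condition $\|u_{T^\complement}\|_p^p\le\theta\|u_T\|_p^p$ with $\theta:=\frac{a+1}{a}\gamma>1$. The lemma as stated only covers $\theta=1$.

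To get the paper's bound from the stated lemma, you need an extra rescaling step: multiply the top $s$ entries by $\theta$, which preserves the ordering because $\theta\ge1$. This yields $\|u_{T^\complement}\|_2^2\le\theta^{2/p}\|u_T\|_2^2$, which is exactly the paper's estimate $\|u_{T^\complement}\|_2\le\theta^{1/p}\|u_T\|_2$.

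Your chain absorbs the factor directly in its middle step. It gives $\sum_{j>k}a_j^\alpha\le\theta\sum_{j\le k}a_j^\alpha$ for every $\theta\ge0$, hence $\|u_{T^\complement}\|_2^2\le\theta\|u_T\|_2^2$. This implies the paper's estimate. It would also let the factor $\sqrt{1+(\frac{a+1}{a}\gamma)^{2/p}}$ in $C_0$ be replaced by the smaller $\sqrt{1+\frac{a+1}{a}\gamma}$.
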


\begin{proof}[Proof of Proposition \ref{prop-u,exact}]
Let $u\in \mathrm{Ker}\, A$ and $T$ be the index set of the $s$ largest
components of $u$ in magnitude.
Then, by the assumptions, we find that
$\|u_{T^\complement}\|_{\infty}\le1$ and $u$ satisfies
the $P_{a,p}$-cone constraint with respect to $T$.

To prove by contradiction, we assume that $u\neq \mathbf{0}$.
By $\|u_{T^\complement}\|_{\infty}\le1$, Lemma \ref{lem-TLp,prop}(iii),
$\gamma\ge 1$, and the $P_{a,p}$-cone constraint,
we find that
\begin{equation}\label{eq-DScfz}
\left\|u_{T^\complement}\right\|^p_\infty \le\rho_{a,p}
\left(\|u_{T^\complement}\|_\infty\right)\le\frac{P_{a,p}(u_{T})}{s}
\le\frac{\gamma P_{a,p}\left(u_{T}\right)}{s}
\end{equation}
and
\begin{equation}\label{eq-DSc1}
\left\|u_{T^\complement}\right\|_p^p\le P_{a,p}(u_{T^\complement})
\le\gamma P_{a,p}\left(u_{T}\right).
\end{equation}
Combining \eqref{eq-DScfz} and \eqref{eq-DSc1} and applying Lemma \ref{lem-sparse repre},
we obtain the following finite convex decomposition
$u_{T^\complement}=\sum_{i=1}^L \lambda_i v_i$
for some $L \in\mathbb{N}$, where $\sum_{i=1}^L  \lambda_i =1$ with $\lambda_i\in(0,1]$,
$v_i$ is $s$-sparse, and, moreover,
\begin{equation}\label{eq-moreover}
\sum_{i=1}^L  \lambda_i \left\|v_i\right\|_2^2
\le \left\|u_{T^\complement}\right\|_\infty^p\left\|u_{T^\complement}\right\|_{2-p}^{2-p}.
\end{equation}
Note that
$$
\left\|u_{T}\right\|_{p}^p\le s^{1-\frac{p}{2}}\left\|u_{T}\right\|_{2}^p\ \ \text{and}\ \
\left\|u_{T^\complement}\right\|^p_\infty \le\frac{\|u_T\|^p_p}{s}.
$$
Applying these, H\"{o}lder's  inequality, \eqref{eq-DSc1}, and Lemma \ref{lem-TLp,prop}(iii) to \eqref{eq-moreover},
we conclude that
\begin{align}\label{eq-est1}
\sum_{i=1}^L  \lambda_i \left\|v_i\right\|_2^2
&\le \left\|u_{T^\complement}\right\|_\infty^p\left(\|u_{T^\complement}\|_{2}^{2}\right)^{\frac{2-2p}{2-p}}
\left(\|u_{T^\complement}\|_{p}^{p}\right)^{\frac{p}{2-p}}
\le\left(\frac{a+1}{a}\gamma\right)^{\frac{p}{2-p}}
\frac{(\|u_{T}\|_{p}^p)^{\frac{2}{2-p}}(\|u_{T^\complement}\|_{2}^{2})^{\frac{2-2p}{2-p}}}{s}\nonumber\\
&\le\left(\frac{a+1}{a}\gamma\right)^{\frac{p}{2-p}}
\left( \left\|u_{T}\right\|_{2}^2\right)^{\frac{p}{2-p}}\left(\|u_{T^\complement}\|_{2}^{2}\right)^{\frac{2-2p}{2-p}}=:\Pi .
\end{align}

Now, for any $i\in\{1,\ldots,L \}$, we let $w_i:=u_T+\mu v_i$, where
$\mu\in\mathbb{R}$ is a constant which will be determined later.
By $\sum_{j=1}^L  \lambda_j =1$, we have
\begin{align}\label{eq-equality}
&\sum_{i=1}^L  \lambda_i\left\|A\,\left(\sum_{j=1}^L  \lambda_j w_j -\frac p2 w_i\right)\,\right\|_2^2
+\frac{1-p}{2}\sum_{i,j=1}^L  \lambda_i \lambda_j  \left\|A(w_i-w_j)\right\|_2^2 \nonumber\\
&\quad=(1-p)\left\|A\left(\sum_{j=1}^L  \lambda_jw_j\right)\right\|_2^2+\frac{p^2}{4}\sum_{i=1}^L  \lambda_i \left\|Aw_i\right\|_2^2\nonumber\\
&\quad\quad+(1-p)\left[\sum_{i=1}^L \lambda_i \left\|Aw_i\right\|_2^2
-\left\|A\left(\sum_{i=1}^L  \lambda_i w_i\right)\right\|_2^2\right]\nonumber\\
&\quad=\left(1-\frac{p}{2}\right)^2\sum_{i=1}^L \lambda_i \left\|Aw_i\right\|_2^2.
\end{align}
For simplicity, we denote by $\mathrm{LHS}$ the left-hand side
and by $\mathrm{RHS}$ the right-hand side of \eqref{eq-equality}.
Since, by $\sum_{j=1}^L  \lambda_j =1$, we have, for each $i\in\{1,\ldots,L \}$,
$$
\sum_{j=1}^L  \lambda_j w_j -\frac p2 w_i=\mu u +\left(1-\frac p2 -\mu\right) u_T -\frac{p\mu}{2}v_i,
$$
then, from $u-u_T=\sum_{i=1}^L \lambda_i v_i$, $w_i-w_j=\mu(v_i-v_j)$, the Cauchy--Schwarz inequality,
the RIP, and $Au=\mathbf{0}$, it follows that
\begin{align*}
\mathrm{LHS}&=\sum_{i=1}^L  \lambda_i\left\|A\,\Bigg(\left(1-\frac p2 -\mu\right) u_T -\frac{p\mu}{2}v_i\Bigg)\,\right\|_2^2
+\frac{1-p}{2}\mu^2\sum_{i,j=1}^L  \lambda_i \lambda_j  \left\|A(v_i-v_j)\right\|_2^2\\
&\quad+\mu\langle Au, \mu(1-p) Au+(2-p)(1-\mu)Au_T\rangle \\
&\le\left(1-\frac p2 -\mu\right) ^2\left\|Au_T\right\|_2^2
+\frac{p^2\mu^2}{4}\sum_{i=1}^L  \lambda_i\left\|Av_i\right\|_2^2
+\frac{1-p}{2}\mu^2\sum_{i,j=1}^L  \lambda_i \lambda_j  \left\|A(v_i-v_j)\right\|_2^2\\
&\quad+\mu^2(1-p) \left\|Au\right\|_2^2
+\mu(1-\mu)(2-p) \left\|Au\right\|_2 \left\|Au_T\right\|_2\\
&\le(1+\delta_{2s})\left[\left(1-\frac p2 -\mu\right) ^2\left\|u_T\right\|_2^2
+\frac{p^2\mu^2}{4}\sum_{i=1}^L  \lambda_i\left\|v_i\right\|_2^2
+\frac{1-p}{2}\mu^2\sum_{i,j=1}^L \lambda_i \lambda_j  \left\|v_i-v_j\right\|_2^2\right]\\
&=(1+\delta_{2s})\left[\left(1-\frac p2 -\mu\right) ^2\left\|u_T\right\|_2^2
+\frac{p^2\mu^2}{4}\sum_{i=1}^L  \lambda_i\left\|v_i\right\|_2^2\right]\\
&\quad+(1+\delta_{2s})(1-p)\mu^2\left(\sum_{i=1}^L   \lambda_i  \left\|v_i\right\|_2^2
-\left\|\sum_{i=1}^L  \lambda_i v_i\right\|_2^2\right).
\end{align*}
On the other hand, by $|{\mathop\mathrm{\,supp\,}}(w_i)|\le 2s$ and $\sum_{j=1}^L  \lambda_j =1$, we have
\begin{align*}
\mathrm{RHS}
\ge (1-\delta_{2s}) \left(1-\frac{p}{2}\right)^2\sum_{i=1}^L  \lambda_i\left\| w_i\right\|_2^2
= (1-\delta_{2s}) \left(1-\frac{p}{2}\right)^2\Bigg(\left\| u_T \right\|_2^2
+\mu^2\sum_{i=1}^L  \lambda_i\left\|v_i\right\|_2^2\Bigg).
\end{align*}
Combining these two inequalities, we obtain
\begin{align*}
&\left[ (1-\delta_{2s}) \left(1-\frac{p}{2}\right)^2-(1+\delta_{2s})\left(1-\frac p2 -\mu\right)^2\right]\left\|u_T\right\|_2^2\\
&\quad\le\left[(1+\delta_{2s})\left(1-p+\frac{p^2}{4}\right)- (1-\delta_{2s}) \left(1-\frac{p}{2}\right)^2\right]\mu^2\sum_{i=1}^L  \lambda_i\left\|v_i\right\|_2^2\\
&\quad\quad-(1+\delta_{2s})(1-p)\mu^2\left\|u_{T^\complement}\right\|_2^2\\
&\quad=  2\left(1-\frac p2\right)^2 \delta_{2s}\mu^2\sum_{i=1}^L  \lambda_i \left\|v_i\right\|_2^2
-(1+\delta_{2s})(1-p)\mu^2\left\|u_{T^\complement}\right\|_2^2
\end{align*}
and hence, by \eqref{eq-est1},
\begin{align*}
&\left[(1+\delta_{2s})\left(1-\frac p2 -\mu\right)^2 -(1-\delta_{2s}) \left(1-\frac{p}{2}\right)^2\right]\left\|u_T\right\|_2^2\\
&\qquad+2\left(1-\frac p2\right)^2 \delta_{2s}\mu^2\Pi
-(1+\delta_{2s})(1-p)\mu^2\left\|u_{T^\complement}\right\|_2^2\\
&\quad\ge 0.\nonumber
\end{align*}
This further yields
\begin{align}\label{eq-delta>}
\delta_{2s}&\ge\max\left\{0,\frac{[(2-p)\mu-\mu^2]\|u_T\|_2^2+(1-p)\mu^2\|u_{T^\complement}\|_2^2}
{[(1-\frac p2 -\mu)^2+(1-\frac p2)^2]\|u_T\|_2^2+2(1-\frac p2)^2
\mu^2\Pi-(1-p)\mu^2\|u_{T^\complement}\|_2^2}\right\}\nonumber\\
&\ge\left\{2\left(1-\frac p2\right)^2\frac{\|u_T\|_2^2+\mu^2\Pi}
{[(2-p)\mu-\mu^2]\|u_T\|_2^2+(1-p)\mu^2\|u_{T^\complement}\|_2^2}-1\right\}^{-1}.
\end{align}
Note that $\mu$ can be chosen arbitrarily in $\mathbb{R}$.
By calculating the derivative of the function (with respect to $\mu$) on the right-hand side of \eqref{eq-delta>},
the minimum of this function is achieved when
$$
\mu=\frac{(1-p)\|u_{T^\complement}\|_2^2-\|u_T\|_2^2+
\sqrt{[\|u_T\|_2^2-(1-p)\|u_{T^\complement}\|_2^2]^2+(2-p)^2\Pi\|u_T\|_2^2}}{(2-p)\Pi}.
$$
Thus, we obtain
\begin{align*}
\delta_{2s}&\ge\left\{\sqrt{\left[(1-p)\frac{\|u_{T^\complement}\|_2^2}
{\|u_T\|_2^2}-1\right]^2+(2-p)^2\frac{\Pi}{\|u_T\|_2^2}}
-(1-p)\frac{\|u_{T^\complement}\|_2^2}{\|u_T\|_2^2}\right\}^{-1}\\
&=\left\{\sqrt{\left[(1-p)t-1\right]^2+(2-p)^2\left(\frac{a+1}{a}\gamma\right)^{\frac{p}{2-p}}t^{\frac{2-2p}{2-p}}}
-(1-p)t\right\}^{-1},
\end{align*}
where $t:=\|u_{T^\complement}\|_2^2/\|u_T\|_2^2$.
Moreover, by calculating the derivative of the function
$$
f(t):=\sqrt{\left[(1-p)t-1\right]^2+(2-p)^2\left(\frac{a+1}{a}\gamma\right)^{\frac{p}{2-p}}
t^{\frac{2-2p}{2-p}}}-(1-p)t,\ \ \forall\,t\in(0,\infty),
$$
we find that, when $t=\eta_0^{(2-p)/p}$,
where $\eta_0$ is the only positive solution of the equation \eqref{eq-equation},
$f$ can arrive at its maximum, that is,
$$
f\left(\eta_0^{\frac{2-p}{p}}\right)=\eta_0^{\frac{2}{p}-1}+1-(1-p)\eta_0^{\frac{2}{p}-1}=
\frac{p\eta_0^{\frac2p}+\eta_0}{\eta_0}=\frac{(2-p)(\frac{a+1}{a}\gamma)^{\frac{p}{2-p}}-\eta_0}{\eta_0}.
$$
Consequently, we conclude that
$$
\delta_{2s}\ge \left[f\left(\eta_0^{\frac{2-p}{p}}\right)\right]^{-1}
=\frac{\eta_0}{(2-p)(\frac{a+1}{a}\gamma)^{\frac{p}{2-p}}-\eta_0}=:\delta(p,a,\gamma)
$$
which contradicts the assumption $\delta_{2s}<\delta(p,a,\gamma)$.
Thus, $u=\mathbf{0}$.
This finishes the proof of Proposition \ref{prop-u,exact}.
\end{proof}

\begin{proof}[Proof of Theorem \ref{thm-exact}]
For simplicity, we write $x^0_\beta$ as $x^0$ and $x_\beta$ as $x$.
Let $u:=x-x^0$ and $T:={\mathop\mathrm{\,supp\,}}(x^0)$.
With the assumption on $\beta$,
we have $P_{a,p}(x)\le P_{a,p}(\overline{x}_\beta)\le 1$.
Then, by Lemma \ref{lem-TLp,prop}(iii) and $u_{T^\complement}=x_{T^\complement}$, we further obtain
$\|u_{T^\complement}\|_{\infty}\le 1$.
Since $x$ is the minimizer  of \eqref{eq-Pl0,n}, from the separability
of $P_{a,p}$ and Lemma \ref{lem-TLp,prop}(iv),
we deduce that
\begin{align*}
P_{a,p}(x^0)&\ge P_{a,p}(x)=P_{a,p}(x^0+u)=P_{a,p}(x^0 +u_T)+P_{a,p}(u_{T^\complement})\\
&\ge P_{a,p}(x^0)-P_{a,p}(u_T)+P_{a,p}(u_{T^\complement}),\nonumber
\end{align*}
which   implies that $u$ satisfies the $P_{a,p}$-cone constraint that
$P_{a,p}(u_{T^\complement})\le P_{a,p}(u_T).$
Obviously, $$Au=Ax-Ax^0=\mathbf{0}.$$
Thus, applying these and Proposition \ref{prop-u,exact} with $\gamma=1$,
we conclude that
$u=\mathbf{0}$, which means $x=x^0$.
This finishes the proof of Theorem \ref{thm-exact}.
\end{proof}

To prove Theorems \ref{thm-stable} and \ref{thm-stable,ep},
we need the following more general variant of Proposition \ref{prop-u,exact}.

\begin{proposition}\label{prop-u}
Let $a\in(0,\infty)$, $p\in(0,1]$ be fixed, $\gamma\ge 1$,
and $A\in\mathbb{R}^{M\times N}$ be a matrix
satisfying the \emph{RIP} of order $2s$ with
$\delta_{2s}<\delta(p,a,\gamma)$
for some $s\in\mathbb{N}$.
For any $u\in\mathbb{R}^N$,
if $u$ satisfies the tube constraint $\|Au\|_2\le \epsilon$
for some $\epsilon\in[0,\infty)$ and the $P_{a,p}$-cone constraint
$P_{a,p}(u_{S^\complement})\le \gamma P_{a,p}(u_{S})$
for some index set $S$ with $|S|\le s$ and $\|u_{S^\complement}\|_{\infty}\le 1$,
then there exists a positive constant $C_0$ such that
$\left\|u\right\|_2\le C_0\epsilon,$
where
\begin{align}\label{eq-def-C0}
C_0&:=\frac{\sqrt{1+\delta_{2s}}\left(1-\mu_0\right)
(2-p)+(2-p-\mu_0)\sqrt{(1-p)[\delta(p,a,\gamma) -\delta_{2s}]}}
{(2-p-\mu_0)^2[\delta(p,a,\gamma) -\delta_{2s}]}\nonumber\\
&\qquad\times\mu_0\sqrt{1+\left(\frac{a+1}{a}\gamma\right)^{\frac{2}{p}}}.
\end{align}
\end{proposition}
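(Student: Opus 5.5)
We follow the proof of Proposition \ref{prop-u,exact} verbatim, but now keep track of the term involving $Au$, which no longer vanishes. As there, let $T$ be the index set of the $s$ largest components of $u$ in magnitude. Then $\|u_{T^\complement}\|_\infty\le 1$ and the $P_{a,p}$-cone constraint still holds with respect to $T$, since replacing $S$ by $T$ only increases the right-hand side and decreases the left-hand side.

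\textbf{Step 1: repeat the decomposition.} Exactly as in \eqref{eq-DScfz}--\eqref{eq-est1}, decompose $u_{T^\complement}=\sum_{i=1}^L\lambda_iv_i$ via Lemma \ref{lem-sparse repre}, obtaining $\sum_i\lambda_i\|v_i\|_2^2\le\Pi$. Set $w_i:=u_T+\mu v_i$ and use the identity \eqref{eq-equality}.

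\textbf{Step 2: bound the LHS with the cross term kept.} In the LHS estimate, the term $\mu\langle Au,\mu(1-p)Au+(2-p)(1-\mu)Au_T\rangle$ is no longer zero. We bound it by
$$
\mu^2(1-p)\epsilon^2+\mu(1-\mu)(2-p)\epsilon\sqrt{1+\delta_{2s}}\,\|u_T\|_2,
$$
for $0<\mu<1$. Comparing with the RHS lower bound then gives a quadratic inequality in $X:=\|u_T\|_2$:
$$
\Big[(1-\delta_{2s})(1-\tfrac p2)^2-(1+\delta_{2s})(1-\tfrac p2-\mu)^2\Big]X^2-2(1-\tfrac p2)^2\delta_{2s}\mu^2\Pi+(1+\delta_{2s})(1-p)\mu^2\|u_{T^\complement}\|_2^2\le \mu^2(1-p)\epsilon^2+\mu(1-\mu)(2-p)\sqrt{1+\delta_{2s}}\,\epsilon X.
$$

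\textbf{Step 3: reduce to one variable.} To turn this into a bound on $X$ we need $\Pi$ and $\|u_{T^\complement}\|_2$ controlled by $X$. From \eqref{eq-DScfz} together with Lemma \ref{lem-TLp,prop}(iii) we get $\|u_{T^\complement}\|_\infty\le(\frac{a+1}{a}\gamma)^{1/p}s^{-1/p}\|u_T\|_p$, and from \eqref{eq-DSc1}, $\|u_{T^\complement}\|_p^p\le\frac{a+1}{a}\gamma\|u_T\|_p^p$. Interpolation then gives $\|u_{T^\complement}\|_2\le(\frac{a+1}{a}\gamma)^{1/p}\|u_T\|_2$. This yields
$$
\|u\|_2\le\sqrt{1+\big(\tfrac{a+1}{a}\gamma\big)^{2/p}}\;\|u_T\|_2,
$$
which matches the factor appearing in $C_0$.

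\textbf{Step 4: choose $\mu$ and solve.} Rather than optimizing over $\mu$ and $t$ pointwise, fix $\mu:=\mu_0$. The nonnegative term $(1-p)\mu^2\|u_{T^\complement}\|_2^2$ and $\Pi$ are handled as in \eqref{eq-delta>}: with $t=\|u_{T^\complement}\|_2^2/X^2$, the worst case over $t$ is governed by the same function $f$, whose extremum gives $\delta(p,a,\gamma)$. The key claim is that the coefficient of $X^2$, after absorbing the $\Pi$ and $\|u_{T^\complement}\|$ terms, is at least $c\,[\delta(p,a,\gamma)-\delta_{2s}]$ with $c$ of order $(2-p-\mu_0)^2$. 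This is the main obstacle: it requires redoing the $t$-optimization of the proof of Proposition \ref{prop-u,exact} for a general $\delta_{2s}$ rather than at equality, and checking that the choice $\mu=\mu_0$ is exactly where $f$ attains its maximum, so that the gap is linear in $\delta(p,a,\gamma)-\delta_{2s}$.

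Once this holds, solving $\alpha X^2\le bX\epsilon+c'\epsilon^2$ via $X\le (b+\sqrt{\alpha c'})\epsilon/\alpha$ gives
$$
X\le\frac{\sqrt{1+\delta_{2s}}(1-\mu_0)(2-p)+(2-p-\mu_0)\sqrt{(1-p)[\delta(p,a,\gamma)-\delta_{2s}]}}{(2-p-\mu_0)^2[\delta(p,a,\gamma)-\delta_{2s}]}\,\mu_0\,\epsilon,
$$
up to verifying the constants. Combining this with Step 3 gives $\|u\|_2\le C_0\epsilon$. I would try first to verify the key coefficient inequality by substituting $\mu_0=\eta_0(\frac{a+1}{a}\gamma)^{-p/(2-p)}$ and $t=\eta_0^{(2-p)/p}$, using \eqref{eq-equation} to simplify.
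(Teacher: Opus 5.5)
Your Steps 1--3 match the paper's proof. You use the same decomposition and identity, the same bound $\mu^2(1-p)\epsilon^2+\mu(1-\mu)(2-p)\sqrt{1+\delta_{2s}}\,\epsilon\|u_T\|_2$ on the cross term (legitimate since $0<\mu_0<1-\frac p2$), and reach the same final factor $\sqrt{1+(\frac{a+1}{a}\gamma)^{2/p}}$. Your interpolation argument for $\|u_{T^\complement}\|_2\le(\frac{a+1}{a}\gamma)^{1/p}\|u_T\|_2$ is a valid substitute for the paper's use of Lemma \ref{lem-Tc<T}.

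The gap is Step 4, which carries the whole content of the proposition, namely the explicit constant $C_0$. You only assert the ``key claim'' on the $X^2$-coefficient and finish ``up to verifying the constants''. The route you sketch is also misdirected. The function $f$ in the proof of Proposition \ref{prop-u,exact} is a function of $t$, obtained after $\mu$ has been optimized out with a $t$-dependent optimizer. So ``checking that $\mu_0$ is where $f$ attains its maximum'' is not meaningful. Even if that optimizer equals $\mu_0$ at the extremal $t$, this gives no control of the coefficient at the fixed value $\mu=\mu_0$ uniformly in $t$, which is what a stability bound needs.

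What is missing is a direct computation at fixed $\mu$. Set $K:=\frac{a+1}{a}\gamma$ and $z:=\|u_{T^\complement}\|_2^2$. Then $\Pi=K^{\frac{p}{2-p}}X^{\frac{2p}{2-p}}z^{\frac{2-2p}{2-p}}$, so the $z$-dependent part $2(1-\frac p2)^2\delta_{2s}\mu^2\Pi-(1+\delta_{2s})(1-p)\mu^2z$ is concave in $z$. For $p<1$ its maximum is attained at $z=K[\frac{(2-p)\delta_{2s}}{1+\delta_{2s}}]^{\frac{2-p}{p}}X^2$; for $p=1$ there is no $z$-dependence and the value is the same. That maximum equals $\frac p2(1+\delta_{2s})\mu^2K[\frac{(2-p)\delta_{2s}}{1+\delta_{2s}}]^{\frac{2-p}{p}}X^2$.

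Now take $\mu=\mu_0$. The hypothesis $\delta_{2s}<\delta(p,a,\gamma)=\frac{\mu_0}{2-p-\mu_0}$ is equivalent to $\frac{(2-p)\delta_{2s}}{1+\delta_{2s}}<\mu_0$. This gives $[\frac{(2-p)\delta_{2s}}{1+\delta_{2s}}]^{\frac{2-p}{p}}\le\mu_0^{\frac{2-2p}{p}}\frac{(2-p)\delta_{2s}}{1+\delta_{2s}}$, so the term is linear in $\delta_{2s}$: it is at most $\frac p2(2-p)\delta_{2s}K\mu_0^{2/p}X^2$. Equation \eqref{eq-equation} gives exactly $pK\mu_0^{2/p}=2-p-2\mu_0$. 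Substituting, the coefficient of $X^2$ becomes
\[
(1+\delta_{2s})\Bigl(1-\frac p2-\mu_0\Bigr)^2-(1-\delta_{2s})\Bigl(1-\frac p2\Bigr)^2+\frac{(2-p)\delta_{2s}}{2}(2-p-2\mu_0)
=-(2-p-\mu_0)^2\left[\delta(p,a,\gamma)-\delta_{2s}\right],
\]
which is the coefficient in \eqref{e2.27}. Your quadratic-formula step yields the stated $C_0$ only with this exact factor $(2-p-\mu_0)^2$, not merely something ``of order'' it. Without this computation, the proposal does not establish the proposition as stated.
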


\begin{proof}
The proof of the present proposition is similar
to that of Proposition \ref{prop-u,exact} and we only indicate
their differences here by using the same notation as in the proof
of Proposition \ref{prop-u,exact}.
Let $T$ be the index set of the $s$ largest components of $u$ in magnitude.
In the present case, the left-hand side of \eqref{eq-equality}
is estimated as follows
\begin{align*}
\mathrm{LHS}
&\le(1+\delta_{2s})\left[\left(1-\frac p2 -\mu\right) ^2\left\|u_T\right\|_2^2
+\frac{p^2\mu^2}{4}\sum_{i=1}^L  \lambda_i\left\|v_i\right\|_2^2
+\frac{1-p}{2}\mu^2\sum_{i,j=1}^L  \lambda_i \lambda_j  \left\|v_i-v_j\right\|_2^2\right]\\
&\quad+\mu^2(1-p) \epsilon^2
+\mu(1-\mu)(2-p) \epsilon\sqrt{1+\delta_{2s}}  \left\|u_T\right\|_2\\
&=(1+\delta_{2s})\left[\left(1-\frac p2 -\mu\right) ^2\left\|u_T\right\|_2^2
+\frac{p^2\mu^2}{4}\sum_{i=1}^L  \lambda_i\left\|v_i\right\|_2^2\right]\\
&\quad+(1+\delta_{2s})(1-p)\mu^2\left(\sum_{i=1}^L   \lambda_i  \left\|v_i\right\|_2^2
-\left\|\sum_{i=1}^L \lambda_i v_i\right\|_2^2\right)\\
&\quad+\mu^2(1-p) \epsilon^2
+\mu(1-\mu)(2-p) \epsilon\sqrt{1+\delta_{2s}}  \left\|u_T\right\|_2.
\end{align*}
Observe that we still have the same estimate for the right-hand side of \eqref{eq-equality}.
Therefore, we have
\begin{align*}
&\left[ (1-\delta_{2s}) \left(1-\frac{p}{2}\right)^2-(1+\delta_{2s})\left(1-\frac p2 -\mu\right)^2\right]\left\|u_T\right\|_2^2
-\sqrt{1+\delta_{2s}} \,\mu(1-\mu)(2-p)\epsilon\left\|u_T\right\|_2\\
&\quad\le 2\left(1-\frac p2\right)^2 \delta_{2s}\mu^2\sum_{i=1}^L  \lambda_i \left\|v_i\right\|_2^2
-(1+\delta_{2s})(1-p)\mu^2\left\|u_{T^\complement}\right\|_2^2
+\mu^2(1-p) \epsilon^2.
\end{align*}
Using \eqref{eq-est1}, we conclude that
\begin{align*}
&\left[(1+\delta_{2s})\left(1-\frac p2 -\mu\right)^2 -(1-\delta_{2s}) \left(1-\frac{p}{2}\right)^2\right]\left\|u_T\right\|_2^2
+\sqrt{1+\delta_{2s}} \,\mu(1-\mu)(2-p)\epsilon\left\|u_T\right\|_2\nonumber\\
&\qquad+2\left(1-\frac p2\right)^2 \delta_{2s}\mu^2\Pi
-(1+\delta_{2s})(1-p)\mu^2\left\|u_{T^\complement}\right\|_2^2+\mu^2(1-p) \epsilon^2\\
&\quad\ge 0,
\end{align*}
where the left-hand side (which is a function of $\|u_{T^\complement}\|_{2}^{2}$) can arrive at its maximum when
$$\left\|u_{T^\complement}\right\|_{2}^{2}=
\frac{a+1}{a}\gamma \left[\frac{(2-p)\delta_{2s}}{1+\delta_{2s}}\right]^{\frac{2-p}{p}}
\left\|u_{T}\right\|_{2}^2.$$
Thus, we obtain
\begin{align*}
0&\le\left[(1+\delta_{2s})\left(1-\frac p2 -\mu\right)^2 -(1-\delta_{2s})
\left(1-\frac{p}{2}\right)^2\right]\left\|u_T\right\|_2^2\\
&\quad+\frac p2(1+\delta_{2s})\left[\frac{(2-p)\delta_{2s}}{1+\delta_{2s}}\right]^{\frac{2-p}{p}}\mu^2
\frac{a+1}{a}\gamma \left\|u_{T}\right\|_{2}^2\\
&\quad+\sqrt{1+\delta_{2s}} \,\mu(1-\mu)(2-p)\epsilon\left\|u_T\right\|_2+\mu^2(1-p) \epsilon^2.
\end{align*}
Since $\mu$ is arbitrary, in the above inequality, if we let
$$
\mu:=\mu_0=\left(\frac{a+1}{a}\gamma\right)^{-\frac{p}{2-p}}\eta_0,
$$
then, from the assumption that $\eta_0$ satisfies \eqref{eq-equation}, $\mu_0<1-\frac{p}{2}$,
and the assumption $\delta_{2s}<\delta(p,a,\gamma)$ which implies
$\frac{(2-p)\delta_{2s}}{1+\delta_{2s}}<\mu_0$,  we further infer that
\begin{align}\label{e2.27}
0&\le(2-p-\mu_0)\left[(2-p-\mu_0)\delta_{2s}-
\mu_0\right]\left\|u_T\right\|_2^2\notag\\
&\quad+\sqrt{1+\delta_{2s}} \mu_0\left(1-\mu_0\right)
(2-p)\epsilon\left\|u_T\right\|_2+\mu_0^2 (1-p) \epsilon^2.
\end{align}
Using $\mu_0<1-\frac{p}{2}$ and the assumption $\delta_{2s}<\delta(p,a,\gamma)$
again, we find that the coefficient  of $\|u_T\|_2^2$ is negative.
By this, together with the quadratic formula, and by the elementary inequality $\sqrt{a+b}\le\sqrt{a}+\sqrt{b}$
for any $a,b\in[0,\infty)$, we conclude that \eqref{e2.27} implies that
$$
\left\|u_{T}\right\|_2\le\frac{\sqrt{1+\delta_{2s}}\left(1-\mu_0\right)
(2-p)+(2-p-\mu_0)\sqrt{(1-p)[\delta(p,a,\gamma)-\delta_{2s}]}}
{(2-p-\mu_0)^2(\delta(p,a,\gamma)-\delta_{2s})}\mu_0\epsilon.
$$
Note that, by \eqref{eq-DSc1}, one has
\begin{equation*}
\left\|u_{T^\complement}\right\|_p^p \le\gamma P_{a,p}\left(u_{T}\right)
\le\frac{a+1}{a}\gamma\left\|u_{T}\right\|_p^p.
\end{equation*}
It then follows from Lemma \ref{lem-Tc<T} with $\alpha=2/p$ that
$$
\left\|u_{T^\complement}\right\|_2
\le\left(\frac{a+1}{a}\gamma\right)^{\frac{1}{p}}\left\|u_{T}\right\|_2.
$$
Thus, we obtain
\begin{align*}
\left\|u\right\|_2&\le \sqrt{\left\|u_{T}\right\|_2^2+\left\|u_{T^\complement}\right\|_2^2}
\le\sqrt{1+\left(\frac{a+1}{a}\gamma\right)^{\frac{2}{p}}}\left\|u_{T}\right\|_2
\le C_0\epsilon,
\end{align*}
where $C_0$ is as in \eqref{eq-def-C0}.
This finishes the proof of Proposition \ref{prop-u}.
\end{proof}

Now, we use Proposition \ref{prop-u} to prove Theorems \ref{thm-stable} 
and \ref{thm-stable,ep}.

\begin{proof}[Proof of Theorem \ref{thm-stable}]
We write $x^0_\beta$ as $x^0$ and $x^\sharp_\beta$ as $x^\sharp$.
Let $u:=x^\sharp-x^0$ and $T:={\mathop\mathrm{\,supp\,}}(x^0)$.
Similarly to the proof of Theorem \ref{thm-exact}, we still have
$\|u_{T^\complement}\|_{\infty}\le 1$ and
$
P_{a,p}(u_{T^\complement})\le P_{a,p}(u_T).
$
By the triangle inequality of $\|\cdot\|_2$, $u$ also satisfies the tube constraint that
\begin{align*}
\left\|Au\right\|_2\le \left\|Ax^\sharp-y_\beta\right\|_2+\left\|y_\beta-Ax^0\right\|_2
\le\epsilon+0=\epsilon .
\end{align*}
These, combined with Proposition \ref{prop-u} in the case $\gamma=1$, further imply that,
when $\delta_{2s}<\overline{\delta}$,
$$
\|x^\sharp-x^0\|_2\le \sqrt{1+\left(\frac{a+1}{a}\right)^{\frac{2}{p}}}\frac{\sqrt{1+\delta_{2s}}\left(1-\mu_0\right)
(2-p)+(2-p-\mu_0)\sqrt{(1-p)(\overline{\delta}-\delta_{2s})}}
{(2-p-\mu_0)^2[\overline{\delta}-\delta_{2s}]}\mu_0\epsilon,
$$
which completes the proof of Theorem \ref{thm-stable}.
\end{proof}

\begin{proof}[Proof of Theorem \ref{thm-stable,ep}]
The proof of the present theorem is similar to that of Theorem \ref{thm-stable}
except that $u$ is defined as $x^\sharp_\beta-x^{0,\epsilon}_\beta$
and the  tube constraint  should be replaced by
\begin{align*}
\left\|Au\right\|_2\le \left\|Ax^\sharp-y_\beta\right\|_2+\left\|y_\beta-Ax^0\right\|_2\le \frac{2\epsilon}{\beta} .
\end{align*}
We omit the details. This finishes the proof of Theorem \ref{thm-stable,ep}.
\end{proof}

\section{Algorithms}\label{sec-algorithm}

 The idea of the iteratively re-weighted least squares (IRLS) algorithm appears
 in the approximation practice by Lawson   for the first time in 1961.
 In the investigation of signal processing,
 IRLS is a vital technique for sparse reconstruction via $\ell_p$ minimization;
 we refer the reader to, for example, \cite{DDFG10,FR13,LXY13}.

In this section, we use a modified IRLS method to solve
the minimization problem with $P_{a,p}$ penalty (IRLSTLp) and then
present the overall IRLSTLp algorithm to solve the
unconstrained $P_{a,p}$ minimization problem in Subsection \ref{subsec-IRLS}.
Finally, we establish some convergence results on outer and inner
iterations of the IRLSTLp algorithm,
respectively, in Subsections \ref{subsec-convergIRLS}
and \ref{subsec-convergDCA}.

\subsection{The IRLSTLp Algorithm for Unconstrained $P_{a,p}$ Minimization}\label{subsec-IRLS}

We begin with a modified IRLS method.
Let $a\in(0,\infty)$ and $p\in(0,1]$.
For a given $\epsilon>0$ and a weight vector $\omega:=(\omega_1,\ldots,\omega_N)\in\mathbb{R}^N$
with each $\omega_i>0$,
we define a \emph{functional $\mathcal{J}_{a,p}$} by setting, for any $x:=(x_1,\ldots,x_N)\in\mathbb{R}^N$,
\begin{equation}\label{e3.1}
\mathcal{J}_{a,p}(x,\omega,\epsilon):=
\frac{p(a+1)}{2}\left[\sum_{i=1}^N \frac{x_i^2 +\epsilon^\kappa}
{(a+|x_i|^p)^{\frac 2p}}\omega_i
+ \frac{2-p}{p}\omega_i ^{-\frac{p}{2-p}}\right],
\end{equation}
where $\kappa$ is a positive parameter.
The \emph{modified} IRLS \emph{algorithm} is to alternately update
the minimizer $x$, the weight $\omega$ ,
and $\epsilon>0$, with $x$ and $\omega$
depending on $\mathcal{J}_{a,p}$,
which is described as in the following Algorithm \ref{algo1}.

\begin{algorithm}
\caption{Modified IRLS}
\label{algo1}
\hspace*{1em}\textbf{Input:} $A\in {\mathbb R}^{M\times N}$, $y\in {\mathbb R}^M$,
$\delta>0$, $s\in\mathbb N$ \\
\hspace*{1em}\textbf{Define:} $\varepsilon_{\mathrm{outer}}>0$ \\
\hspace*{1em}\textbf{Initialize:} $x^0=\mathbf{0}$ and $\epsilon_0=1$ \\
\hspace*{1em}\textbf{For} $n=0,1,2,\ldots$ \quad \textbf{do} \\
\hspace*{2em} $\omega^{n}=\displaystyle\mathop{\arg\min\,}_{
\omega:=(\omega_1,\ldots,\omega_N)\ \mathrm{with\ each}\ \omega_i>0} \mathcal{J}_{a,p}(x^n,\omega,\epsilon_n)$ \\
\hspace*{2em} $x^{n+1}=\displaystyle\mathop{\arg\min\,}_{Ax =y} \mathcal{J}_{a,p}(x,\omega^{n},\epsilon_n)$ \\
\hspace*{2em} $\epsilon_{n+1}=\min\left\{\epsilon_n , \frac{r(x^{n+1})_{s+1}}{\delta}\right\}$ \\
\hspace*{2em} \textbf{if} $\|x^{n+1}-x^n\|_{\infty}< \varepsilon_{\mathrm{outer}}$ \quad\textbf{then break} \\
\hspace*{1em}\textbf{Output:} $x_{\mathrm{new}}=x^{n+1}$
\end{algorithm}

In each step, the weight $\omega^{n}:=(\omega_1^{n},\ldots,\omega^{n}_N)$ is updated as
\begin{equation}\label{eq-update,w}
\omega^{n}_i :=\frac{(|x_i^n|^2+\epsilon_n^\kappa)^{\frac{p-2}{2}}}
{(a+|x_i^n|^p)^{1-\frac{2}{p}}},   \quad i\in\{1,\ldots,N\}.
\end{equation}
Once the new weight is found, to update $x$,
we need to solve a weighted minimization problem
\begin{equation*}
\min_{Ax=y} (a+1)\sum_{i=1}^N \frac{x_i^2+\epsilon_n^\kappa}{(a+|x_i|^p)^{\frac{2}{p}}}\omega_i^{n}.
\end{equation*}
Instead, we consider its approximation
\begin{align}\label{eq-approximation}
\min_{Ax=y} (a+1)\sum_{i=1}^N
\left[\frac{x_i^2}{a+|x_i|^p}+\frac{\epsilon_n^\kappa}{a+|x^n_i|^p}\right]
\left[\frac{a+|x^n_i|^p}{(a+|x^n_i|^p)^{\frac{2}{p}}}
\omega_i^{n}\right]  .
\end{align}
Now, we introduce $w^n:=(w^n_1,\ldots,w^n_N)$ by setting, for any $i\in\{1,\ldots,N\}$,
\begin{equation}\label{eq-def-w}
w_i^{n}:=\frac{a+|x^n_i|^p}{(a+|x_i^n|^p)^{\frac{2}{p}}}\omega_i^{n}
= \left(|x_i^n|^2+\epsilon_n^\kappa\right)^{\frac{p-2}{2}}.
\end{equation}
Then \eqref{eq-approximation} can be  recast as
\begin{align*}
\min_{Ax=y} (a+1)\sum_{i=1}^N \frac{x_i^2}{a+|x_i|^p}w_i^{n}.
\end{align*}
Here, one can also see that the introduction of $\epsilon_{n}$ regularizes $w^n$ because
$\|w^{n}\|_\infty \le \epsilon_{n}^{-\frac{\kappa(2-p)}{2}}$.

Next, we use the modified IRLS method to solve
the unconstrained minimization
\begin{equation}\label{eq-unproblem}
\min_{x\in\mathbb{R}^N} Q_{a,p}(x) := \min_{x\in\mathbb{R}^N} \lambda P_{a,p}(x) + \frac{1}{2}\left\|Ax-y\right\|_2^2,
\end{equation}
where $\lambda\in(0,\infty)$ is the regularity parameter.

The concrete algorithm is presented as follows.

\begin{algorithm}
\caption{IRLSTLp  for  unconstrained $P_{a,p}$ minimization \eqref{eq-unproblem}}\label{alg2}
\hspace*{1em}\textbf{Input:} $A\in {\mathbb R}^{M\times N}$, $y\in {\mathbb R}^M$,
$\delta>0$, $s\in\mathbb N$, and $\kappa>0$ \\
\hspace*{1em}\textbf{Define:} $\varepsilon_{\mathrm{outer}},\varepsilon'_{\mathrm{outer}}>0$ \\
\hspace*{1em}\textbf{Initialize:} $x^0=\mathbf{0}$ and $\epsilon_0=1$ \\
\hspace*{1em}\textbf{For} $n=0,1,2,\ldots$ \quad \textbf{do} \\
\hspace*{2em} $w_i^{n}= (|x_i^n|^2+\epsilon_n^\kappa)^{\frac{p-2}{2}}$ \\
\hspace*{2em} $x^{n+1}=\displaystyle\mathop{\arg\min\,}_{x:=(x_1,\ldots,x_N)\in \mathbb{R}^N}
  \lambda (a+1)\sum_{i=1}^N \frac{x_i^2}{a+|x_i|^p}w_i^{n}+ \frac{1}{2}\|Ax-y\|_2^2$ \\
\hspace*{2em} $\epsilon_{n+1}=\min\left\{\epsilon_n , \frac{r(x^{n+1})_{s+1}}{\delta}\right\}$ \\
\hspace*{2em} \textbf{if} $|r(x^{n+1})_{s+1}-r(x^{n})_{s+1}|<\varepsilon_{\mathrm{outer}}$ \textbf{or} $r(x^{n+1})_{s+1}<\varepsilon'_{\mathrm{outer}}$ \quad\textbf{then break} \\
\hspace*{1em}\textbf{Output:} $x_{\mathrm{new}}=x^{n+1}$
\end{algorithm}

In general, the algorithm generates  a sequence $\{w^n,x^n\}_n$
and a decreasing sequence $\{\epsilon_n\}_n$ of nonnegative numbers;
especially, if $\epsilon_{n_0}=0$ for some $n_0\in\mathbb{N}$, we stop the algorithm and define $x^k:=x^{n_0}$ and $w^k:=w^{n_0}$
for $k>n_0$.

In each step of Algorithm \ref{alg2}, to update $x$, we need to
solve an unconstrained sub-problem
\begin{equation}\label{eq-subproblem}
\min_{x\in \mathbb{R}^N}  \lambda (a+1)\sum_{i=1}^N \frac{x_i^2}{a+|x_i|^p}w_i^{n}
+ \frac{1}{2}\left\|Ax-y\right\|_2^2,
\end{equation}
where $w_i^{n}$ is defined as in \eqref{eq-def-w}.
Although the sub-problem \eqref{eq-subproblem} is convex,
its explicit solution is hard to obtain.
To escape this, we use the difference of convex functions (DC) programming (see, for example,
\cite{DL98}).
Note that the function $\rho(t):=t^2/(a+|t|^p)$, $t\in\mathbb{R}$, can
be written as a difference of two convex functions:
$$
\rho(t)=\frac{t^2}{a}-\left(\frac{t^2}{a}-\frac{t^2}{a+|t|^p}\right)
=\frac{t^2}{a}-\frac{|t|^{p+2}}{a(a+|t|^p)}.
$$
Then, for a given weight $w:=(w_1,\ldots,w_N)$, we define the function $f_w$
on $\mathbb{R}^N$ by setting, for any $x:=(x_1,\ldots,x_N)\in\mathbb{R}^N$,
$$f_w(x):=\lambda (a+1)\sum_{i=1}^N \frac{x_i^2}{a+|x_i|^p}w_i
+ \frac{1}{2}\left\|Ax-y\right\|_2^2$$
and one can easily obtain a DC decomposition of $f_w$ as
\begin{equation}\label{e3.7}
f_w = g_w-h_w,
\end{equation}
where, for any $x\in \mathbb{R}^N$,
$$
g_w(x):= \frac{\lambda (a+1)}{a}\|x\|^2_{\ell_2(w)} + \frac{1}{2}\left\|Ax-y\right\|_2^2 + c\|x\|_2^2
$$
and
\begin{align*}
h_w(x):&= \frac{\lambda (a+1)}{a}\left[\|x\|^2_{\ell^2(w)} -
\sum_{i=1}^N w_i \frac{x_i^2}{a+|x_i|^p}\right] + c\|x\|_2^2\\
&=\frac{\lambda (a+1)}{a} \sum_{i=1}^N w_i \frac{|x_i|^{p+2}}{a(a+|x_i|^p)} + c\|x\|_2^2
=:\frac{\lambda (a+1)}{a} \varphi_w(x) + c\|x\|_2^2
\end{align*}
with $\|x\|^2_{\ell_2(w)}:= \sum_{i=1}^N w_ix_i^2$, where
$c$ is any given positive constant.
We note that the additional term $c\|x\|_2^2$ with $c>0$ here is used
to promote the convexity of both $g_w$ and $h_w$.
We can also calculate the gradient $\nabla f_w$ of $f_w$ as
that, for any $x\in\mathbb{R}^N$,
\begin{equation}\label{e3.6}
\nabla f_w(x) = \frac{2\lambda (a+1)}{a} W x  -\frac{\lambda (a+1)}{a}
\nabla\varphi_{w}(x) + A^T Ax^*-A^T y,
\end{equation}
where $W:=\mathrm{diag}(w_1,\ldots,w_N)$.

Then the DCA algorithm for the sub-problem \eqref{eq-subproblem} is described
as in the following Algorithm \ref{alg3}.

\begin{algorithm}
\caption{DCA for the weighted unconstrained sub-problem  \eqref{eq-subproblem}}
\label{alg3}
\hspace*{1em}\textbf{Input:} $A\in {\mathbb R}^{M\times N}$, $y\in {\mathbb R}^M$,
and $w^n:=(w^n_1,\ldots,w^n_N)\in \mathbb{R}^N$ with each $w^n_i$ as in \eqref{eq-def-w}\\
\hspace*{1em}\textbf{Define:} $\varepsilon_{\mathrm{inner}}>0$ and $W=\mathrm{diag}(w^n_1,\ldots,w^n_N)$ \\
\hspace*{1em}\textbf{Initialize:} $x^0=\mathbf{0}$ \\
\hspace*{1em}\textbf{For} $k=0,1,2,\ldots$ \quad \textbf{do} \\
\hspace*{2em} $v^k=\nabla h_{w^n}(x^k)=\frac{\lambda (a+1)}{a} \nabla\varphi_{w^n}(x^k) + 2cx^k$ \\
\hspace*{2em} $x^{k+1}=\displaystyle\mathop{\arg\min\,}\{x\in \mathbb{R}^N :\ g_{w^n}(x)-\langle x, v^k\rangle\}$ \\
\hspace*{4em} $=\left[A^T A +  2cI + \frac{2\lambda (a+1)}{a}W\right]^{-1}(A^T y + v^k)$ \\
\hspace*{2em} \textbf{if} $\|x^{k+1}-x^k\|_{\infty}< \varepsilon_{\mathrm{inner}}$ \quad \textbf{then break} \\
\hspace*{1em}\textbf{Output:} $x_{\mathrm{new}}=x^{k+1}$
\end{algorithm}

\subsection{Convergence of Modified IRLS Algorithm}\label{subsec-convergIRLS}

In this subsection, we establish some convergence results of the algorithms.
We begin with the following concept of the null space property (NSP) (see \cite[(3.8)]{CDD08} with $X:=P_p$).
\begin{definition}
Let $p\in(0,1]$, $A\in\mathbb{R}^{M\times N}$, and $k\in\mathbb{N}$.
The matrix $A$ is said to have the \emph{$p$-Null Space Property} ($p$-NSP)
of order $k$ for some $\gamma\in(0,\infty)$ if,
for any $x\in \mathrm{Ker}\, A$
and any set $T$ with cardinality $|T|\le k$,
\begin{equation*}
\left\|x_T\right\|_p^p\le \gamma\left\|x_{T^\complement}\right\|_p^p.
\end{equation*}
\end{definition}

A well-known result is that  RIP implies $p$-NSP; see \cite{CDD08}.
In addition, Sun \cite{S11} introduced the sparse approximation property
which is a weaker variant of the RIP but stronger than $p$-NSP.

We also need several auxiliary lemmas.
The first one is precisely \cite[Lemma 4.1]{DDFG10}
and the remaining two lemmas are proved in Appendix C.
For any $x\in\mathbb{R}^N$,
let $\{r(x)_i\}_{i=1}^N$ and $\{\sigma_j(x)_1\}_{i=1}^N$
be as  in \eqref{eq-1.4}.

\begin{lemma}\label{lem-Lip}
The map $x\mapsto r(x):=\{r(x)_{i}\}_{i=1}^N$
mapping $\mathbb{R}^N$ to $\mathbb{R}^N$ is Lipschitz continuous on $(\mathbb{R}^N,\|\cdot\|_{\infty})$, that is,
for any $x_1, x_2\in\mathbb{R}^N$,
$$
\left\|r(x_1)-r(x_2)\right\|_\infty \le \left\|x_1-x_2\right\|_\infty.
$$
Moreover,  for any $j\in\{1,\ldots,N\}$ and $x_1,x_2\in\mathbb{R}^N$,
$$
\left|\sigma_j(x_1)_1 - \sigma_j(x_2)_1\right|\le \left\|x_1 - x_2\right\|_1
$$
and, for any $i>j$,
\begin{equation*}
(i-j) r(x_1)_i \le \|x_1 -x_2\|_1 + \sigma_j(x_2)_1.
\end{equation*}
\end{lemma}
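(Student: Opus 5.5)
Although the lemma is quoted from \cite[Lemma 4.1]{DDFG10}, it can be proved directly from elementary properties of the decreasing rearrangement. I would prove the three claims in order, with each one feeding into the next.

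\emph{Lipschitz continuity of $r$.} Fix $i$ and set $\tau:=\|x_1-x_2\|_\infty$. The quantity $r(x_1)_i$ has a max--min description: it is the largest $t$ such that at least $i$ coordinates of $x_1$ satisfy $|(x_1)_k|\ge t$. Take the $i$ indices where $|x_1|$ is largest. On each of them, $|(x_2)_k|\ge |(x_1)_k|-\tau\ge r(x_1)_i-\tau$. So at least $i$ coordinates of $x_2$ have magnitude at least $r(x_1)_i-\tau$, which gives $r(x_2)_i\ge r(x_1)_i-\tau$. Swapping the roles of $x_1$ and $x_2$ gives the reverse inequality, hence $\|r(x_1)-r(x_2)\|_\infty\le\tau$.

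\emph{Lipschitz bound for $\sigma_j$.} I would use the variational formula
\[
\sigma_j(x)_1=\min_{|S|\le j}\|x_{S^\complement}\|_1 ,
\]
with the minimum attained when $S$ is the set of the $j$ largest entries. Let $S$ be optimal for $x_2$. Then
\[
\sigma_j(x_1)_1\le\|(x_1)_{S^\complement}\|_1\le\|(x_2)_{S^\complement}\|_1+\|x_1-x_2\|_1=\sigma_j(x_2)_1+\|x_1-x_2\|_1 .
\]
Exchanging $x_1$ and $x_2$ gives the absolute-value statement.

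\emph{The tail estimate.} For $i>j$, each of the $i-j$ entries $r(x_1)_{j+1},\ldots,r(x_1)_i$ is at least $r(x_1)_i$. Therefore
\[
(i-j)\,r(x_1)_i\le\sum_{k=j+1}^{i}r(x_1)_k\le\sigma_j(x_1)_1\le\|x_1-x_2\|_1+\sigma_j(x_2)_1,
\]
where the last step is the previous claim.

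The only point needing care is justifying the variational formula for $\sigma_j$. This is immediate, because removing any $j$ entries leaves an $\ell_1$ mass at least as large as removing the $j$ largest. Everything else is routine.
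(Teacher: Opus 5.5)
Your proof is correct. The paper does not prove this lemma itself; it quotes it from \cite[Lemma 4.1]{DDFG10}, and your argument is essentially the standard one behind that citation: you use the best $j$-term support of $x_2$ for the $\sigma_j$ bound, and then $\sum_{k=j+1}^{i} r(x_1)_k\le\sigma_j(x_1)_1$ for the tail estimate. In the first claim, the usual write-up bounds $r(x_1)_i$ from above by $\max_{k\notin\Lambda}|(x_1)_k|$, with $\Lambda$ the $i-1$ largest entries of $x_2$, whereas you bound $r(x_2)_i$ from below via the $i$ largest entries of $x_1$; these are dual forms of the same max--min characterization.
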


\begin{lemma}\label{prop-J}
Let  $\{x^n\}_{n\in\mathbb{N}}$, $\{\omega^n\}_{n\in\mathbb{N}}$,
and $\{\epsilon_n\}_{n\in\mathbb{N}}$  be the sequences
generated by Algorithm \ref{algo1}. Then the following assertions hold.
\begin{enumerate}
\item[\rm(i)] For any $n\in\mathbb{N}$,
\begin{equation}\label{eq-equ,J}
\mathcal{J}_{a,p}\left(x^{n},\omega^{n},\epsilon_{n}\right)
= (a+1)\sum_{i=1}^N  \frac{(|x^n_i|^2+\epsilon_n^\kappa)^{\frac p2}}{a+|x^n_i|^p} .
\end{equation}
\item[\rm(ii)]
The functional $\mathcal{J}_{a,p}$ has the following monotonously decreasing
property
\begin{align}\label{eq-J,decrease}
\mathcal{J}_{a,p}\left(x^{n+1},\omega^{n+1},\epsilon_{n+1}\right)
&\le\mathcal{J}_{a,p}\left(x^{n+1},\omega^{n},\epsilon_{n+1}\right)\\
&\le\mathcal{J}_{a,p}\left(x^{n+1},\omega^{n},\epsilon_{n}\right)
\le\mathcal{J}_{a,p}\left(x^{n},\omega^{n},\epsilon_{n}\right).\nonumber
\end{align}
\item[\rm(iii)]
$\{\epsilon_n\}_{n\in\mathbb{N}}$ converges to some $\epsilon\ge 0$ as $n\to\infty$.
\item[\rm(iv)]
For any $n\in\mathbb{N}$, $P_{a,p}(x^n)\le \mathcal{J}_{a,p}(x^0,\omega^0,\epsilon_0)$.
\item[\rm(v)]  $\{x^n\}_{n\in\mathbb{N}}$ is $\ell_\infty$-bounded.
\end{enumerate}
\end{lemma}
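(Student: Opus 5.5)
The plan is to prove the five assertions in order, each resting on the previous one. The one real idea is the explicit minimization of $\mathcal{J}_{a,p}$ in $\omega$, which is what (i) records.

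\textbf{Step 1: assertion (i).} For fixed $x$ and $\epsilon$, the functional $\mathcal{J}_{a,p}(x,\cdot,\epsilon)$ is separable in $\omega$. Each summand has the form $c_i\omega_i+\frac{2-p}{p}\omega_i^{-\frac{p}{2-p}}$ with
$$c_i:=\frac{x_i^2+\epsilon^\kappa}{(a+|x_i|^p)^{2/p}}>0.$$
This is strictly convex on $(0,\infty)$ and tends to $\infty$ at both ends. Setting the derivative to zero gives $c_i=\omega_i^{-\frac{2}{2-p}}$, that is, $\omega_i=c_i^{-\frac{2-p}{2}}$. This is exactly the update \eqref{eq-update,w}. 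Substituting it back, both $c_i\omega_i$ and $\omega_i^{-\frac{p}{2-p}}$ equal $c_i^{p/2}$. Hence the bracket equals $\frac2p\sum_i c_i^{p/2}$, and multiplying by $\frac{p(a+1)}{2}$ yields \eqref{eq-equ,J}.

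\textbf{Step 2: assertions (ii) and (iii).}
\begin{enumerate}
\item[(a)] The first inequality in \eqref{eq-J,decrease} holds because $\omega^{n+1}$ minimizes $\mathcal{J}_{a,p}(x^{n+1},\cdot,\epsilon_{n+1})$.
\item[(b)] The second holds because $\epsilon_{n+1}\le\epsilon_n$ by construction and $\mathcal{J}_{a,p}$ is increasing in $\epsilon$, since $\omega_i>0$.
\item[(c)] The third holds because $x^{n+1}$ minimizes $\mathcal{J}_{a,p}(\cdot,\omega^n,\epsilon_n)$ over $\{Ax=y\}$ and $x^n$ is feasible. This is automatic for $n\ge1$. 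For $n=0$ the initialization $x^0=\mathbf 0$ is feasible only when $y=\mathbf 0$. So I will either start the chain at $n=1$, or replace $x^0$ by any feasible point; both changes only alter the constant in (iv).
\item[(d)] For (iii), $\{\epsilon_n\}$ is nonincreasing and nonnegative, so it converges.
\end{enumerate}

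\textbf{Step 3: assertion (iv).} By (i) and $(|x_i|^2+\epsilon^\kappa)^{p/2}\ge|x_i|^p$, we get $P_{a,p}(x^n)\le\mathcal{J}_{a,p}(x^n,\omega^n,\epsilon_n)$. Iterating (ii) bounds the right-hand side by $J_0:=\mathcal{J}_{a,p}(x^0,\omega^0,\epsilon_0)$.

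\textbf{Step 4: assertion (v), the main obstacle.} Since $\rho_{a,p}<a+1$, a bound $P_{a,p}(x^n)\le J_0$ does not by itself bound the coordinates when $J_0\ge a+1$, which is typical (for $x^0=\mathbf 0$, $J_0=(a+1)N/a$). I would instead argue as follows.
\begin{enumerate}
\item[(a)] By Lemma \ref{lem-TLp,prop}(ii),(iii), $\rho_{a,p}(t)\ge1$ whenever $|t|\ge1$. So the set $T_n:=\{i:\ |x^n_i|>1\}$ has $|T_n|\le J_0$.
\item[(b)] Fix one solution $x^*$ of $Ax=y$ and set $u^n:=x^n-x^*\in\mathrm{Ker}\,A$.
\item[(c)] Off $T_n$ we have $|u^n_i|\le1+\|x^*\|_\infty$, so $\|u^n_{T_n^\complement}\|_p^p\le N(1+\|x^*\|_\infty)^p$.
\item[(d)] Assume $A$ has the $p$-NSP of order $\lceil J_0\rceil$ for some $\gamma$, which is implied by the RIP hypotheses used elsewhere in the paper. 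Then $\|u^n_{T_n}\|_p^p\le\gamma\|u^n_{T_n^\complement}\|_p^p$ is bounded uniformly in $n$.
\item[(e)] Therefore $\|x^n\|_\infty\le\|x^*\|_\infty+\|u^n\|_p$ is bounded uniformly in $n$.
\end{enumerate}
I expect the delicate points to be justifying this NSP hypothesis and choosing its order. If the statement is meant without such an assumption, I would check whether $J_0<a+1$ can be arranged; in that case inverting $\rho_{a,p}$ gives the bound directly.
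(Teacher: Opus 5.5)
Your Steps 1--3 agree with the paper's brief proof: it also uses the explicit weight formula, the two minimization properties together with $\epsilon_{n+1}\le\epsilon_n$, and $P_{a,p}(x^n)\le\mathcal{J}_{a,p}(x^n,\omega^n,\epsilon_n)$ via $(|x_i|^2+\epsilon^\kappa)^{p/2}\ge|x_i|^p$.

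Your $n=0$ caveat is correct, and the paper passes over it. It is not cosmetic. Take $A=(I_M\ \ \mathbf{0})\in\mathbb{R}^{M\times N}$, let $y$ have all entries equal to a large $L$, and let $a>N/M$. Then $P_{a,p}(x^1)\ge M\rho_{a,p}(L)$, which tends to $(a+1)M>(a+1)N/a=\mathcal{J}_{a,p}(x^0,\omega^0,\epsilon_0)$ as $L\to\infty$. So (iv) as stated fails, and its constant really must be changed.

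You are also right about the paper's argument for (v), which deduces it from (iv) and strict monotonicity of $\rho_{a,p}$. That only works when the bound is below $\sup\rho_{a,p}=a+1$, i.e.\ $N<a$ for $x^0=\mathbf{0}$.

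\textbf{The gap.} Your replacement argument for (v) does not work either. With $J_0=(a+1)N/a=N+N/a>N$, the count $|T_n|\le J_0$ in Step 4(a) is vacuous. The $p$-NSP of order $\lceil J_0\rceil$ assumed in 4(d) holds for no matrix with $\mathrm{Ker}\,A\neq\{\mathbf 0\}$: taking $T=\{1,\ldots,N\}$, so $T^\complement=\emptyset$, forces $\|u\|_p^p\le 0$ for every $u\in\mathrm{Ker}\,A$. In particular it is not implied by RIP of order $2s$, which only gives an NSP of order about $s$.

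Restarting the chain at $n=1$ does not rescue this. The quantity $\mathcal{J}_{a,p}(x^1,\omega^1,\epsilon_1)\ge P_{a,p}(x^1)$ depends on the unknown first iterate and has no a priori bound of order $s$.

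The underlying obstruction is that every summand of $P_{a,p}$ and of $\mathcal{J}_{a,p}(\cdot,\omega,\epsilon)$ stays bounded as $|x_i|\to\infty$. Energy bounds of type (ii)/(iv) can therefore only limit \emph{how many} coordinates exceed a threshold $R$, never \emph{how large} they are. Even letting $R\to\infty$, that count is at best $J_0/\rho_{a,p}(R)\to N/a$.

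So (v), as stated, is established neither by your proposal nor by the paper. To prove it you must add a hypothesis and state (v) under it. Two options:
\begin{enumerate}
\item[(1)] a feasible starting point whose $\mathcal{J}_{a,p}$-value is below $a+1$, so that $\rho_{a,p}$ can be inverted;
\item[(2)] a $p$-NSP of order at least the resulting bound on the number of large coordinates, with that bound genuinely smaller than $N$.
\end{enumerate}
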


\begin{proof}
(i) follows from \eqref{eq-update,w}
and (ii) is a consequence of both the decreasing property of $\{\epsilon_n\}_{n\in\mathbb{N}}$ and the minimization property.
(iii) follows from each $\epsilon_n\ge 0$ for any $n\in\mathbb{N}$ and
the decreasing property of $\{\epsilon_n\}_{n\in\mathbb{N}}$.
By  \eqref{eq-equ,J} and \eqref{eq-J,decrease},
(iv) can be derived as follows, for any $n\in \mathbb{N}$,
$$
P_{a,p}(x^n)\le\mathcal{J}_{a,p}\left(x^{n},\omega^{n},\epsilon_{n}\right)
\le \mathcal{J}_{a,p}\left(x^{0},\omega^{0},\epsilon_{0}\right).
$$
(v) is a consequence of (iv) and the strictly increasing property of $\rho_{a,p}$.
This finishes the proof of Lemma \ref{prop-J}..
\end{proof}

\begin{lemma}\label{lem-NSP}
Assume that $c_0\in(0,\infty)$, $y\in\mathbb{R}^M$, and
a matrix $A$ has the $p$-\emph{NSP} of order $s\in\mathbb{N}$ for some $\gamma\in(0,\frac{a}{a+(2c_0)^p})$.
Then, for any $x,x'\in \mathbb{R}^N$ with $Ax=y$, $Ax'=y$, $\|x_{T^\complement}\|_\infty\le c_0$,
and $\|x'_{T^\complement}\|_\infty\le c_0$,
$$
P_{a,p}\left(x'-x\right)\le C_{a,p,s,\gamma,c_0}\left[P_{a,p}(x')-P_{a,p}(x)+2\sum_{i=s+1}^N \rho_{a,p}\left(r(x)_i\right)\right],
$$
where $C_{a,p,s,\gamma,c_0}$ is a positive constant depending only on $a,p,s,\gamma$, and $c_0$.
\end{lemma}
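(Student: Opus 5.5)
We take $T$ to be the index set of the $s$ largest components of $x$ in magnitude; we read the hypothesis $\|x_{T^\complement}\|_\infty,\|x'_{T^\complement}\|_\infty\le c_0$ as referring to this $T$. Set $u:=x'-x$. Then $Au=\mathbf{0}$, so $u\in\mathrm{Ker}\,A$, and $\|u_{T^\complement}\|_\infty\le 2c_0$. The plan follows the argument of \cite[Lemma 4.3]{DDFG10}: first turn the $p$-NSP into a $P_{a,p}$-cone inequality for $u$ with a constant strictly below $1$, then combine it with the subadditivity of $\rho_{a,p}$ from Lemma \ref{lem-TLp,prop}(vi).

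\textbf{Step 1 (from $p$-NSP to a $P_{a,p}$ cone).} Lemma \ref{lem-TLp,prop}(iii) gives $\rho_{a,p}(t)\le\frac{a+1}{a}|t|^p$ for all $t$. In the other direction, for $|t|\le 2c_0$ we have directly
$$\rho_{a,p}(t)=\frac{(a+1)|t|^p}{a+|t|^p}\ge\frac{(a+1)|t|^p}{a+(2c_0)^p}.$$
Applying the first bound on $T$, then the $p$-NSP, then the second bound on $T^\complement$, we get
$$P_{a,p}(u_T)\le\frac{a+1}{a}\|u_T\|_p^p\le\frac{a+1}{a}\gamma\|u_{T^\complement}\|_p^p\le\gamma\frac{a+(2c_0)^p}{a}P_{a,p}(u_{T^\complement})=:\gamma' P_{a,p}(u_{T^\complement}).$$
The assumption $\gamma<\frac{a}{a+(2c_0)^p}$ is exactly what makes $\gamma'<1$. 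This is the only place where the $\ell_\infty$ bound on $T^\complement$ is used, and it is the crux of the proof.

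\textbf{Step 2 (reverse triangle inequalities).} By separability and Lemma \ref{lem-TLp,prop}(vi), applied componentwise in the form $\rho_{a,p}(t_1+t_2)\ge\rho_{a,p}(t_1)-\rho_{a,p}(t_2)$, we have
$$P_{a,p}(x'_T)\ge P_{a,p}(x_T)-P_{a,p}(u_T)\quad\text{and}\quad P_{a,p}(x'_{T^\complement})\ge P_{a,p}(u_{T^\complement})-P_{a,p}(x_{T^\complement}).$$
Adding the two inequalities and writing $P_{a,p}(x_T)=P_{a,p}(x)-P_{a,p}(x_{T^\complement})$ gives
$$P_{a,p}(u_{T^\complement})-P_{a,p}(u_T)\le P_{a,p}(x')-P_{a,p}(x)+2P_{a,p}(x_{T^\complement}).$$
By the choice of $T$ and since $\rho_{a,p}$ is even, $P_{a,p}(x_{T^\complement})=\sum_{i=s+1}^N\rho_{a,p}(r(x)_i)$.

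\textbf{Step 3 (conclusion).} By Step 1, the left-hand side of the last inequality is at least $(1-\gamma')P_{a,p}(u_{T^\complement})$. Using Step 1 once more,
$$P_{a,p}(u)=P_{a,p}(u_T)+P_{a,p}(u_{T^\complement})\le(1+\gamma')P_{a,p}(u_{T^\complement}).$$
Combining the two, the claim holds with
$$C_{a,p,s,\gamma,c_0}:=\frac{1+\gamma'}{1-\gamma'},\qquad \gamma'=\gamma\,\frac{a+(2c_0)^p}{a},$$
which depends only on $a,p,\gamma,c_0$ (and trivially on $s$).
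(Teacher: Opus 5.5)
Your proof is correct and follows essentially the same route as the paper. With $T$ the $s$ largest entries of $x$, the paper gets the same inequality $P_{a,p}(u_{T^\complement})\le P_{a,p}(u_T)+P_{a,p}(x')-P_{a,p}(x)+2P_{a,p}(x_{T^\complement})$ from subadditivity of $\rho_{a,p}$, and the same cone bound $P_{a,p}(u_T)\le \gamma\frac{a+(2c_0)^p}{a}P_{a,p}(u_{T^\complement})$ from the $p$-NSP; it merely isolates $P_{a,p}(u_T)$ before adding the two pieces, which gives the same constant $\frac{1+\gamma'}{1-\gamma'}$.
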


\begin{proof}
Let $T$ be   the index set of the $s$ largest components of $x$ in magnitude.
Then, by Lemma \ref{lem-TLp,prop}(vi) and the separability of $P_{a,p}$, we find that
\begin{align*}
P_{a,p}\left(x'_{T^\complement}-x_{T^\complement}\right)&\le P_{a,p}
\left(x'_{T^\complement}\right)+P_{a,p}\left(x_{T^\complement}\right)
=P_{a,p}\left(x'\right)-P_{a,p}\left(x'_T\right)+P_{a,p}\left(x_{T^\complement}\right)\\
&=P_{a,p}\left(x\right)+P_{a,p}\left(x'\right)-P_{a,p}\left(x\right)-P_{a,p}
\left(x'_T\right)+P_{a,p}\left(x_{T^\complement}\right)\\
&=P_{a,p}\left(x_T\right)-P_{a,p}\left(x'_T\right)  +P_{a,p}
\left(x'\right)-P_{a,p}\left(x\right) +2P_{a,p}\left(x_{T^\complement}\right)\\
&\le P_{a,p}\left(x'_T-x_T\right) + P_{a,p}\left(x'\right)-P_{a,p}
\left(x\right) +2P_{a,p}\left(x_{T^\complement}\right).
\end{align*}
By the fact that $x'-x\in \mathrm{Ker}\, A$ and
the $p$-NSP of $A$, together with the assumptions that $\|x_{T^\complement}\|_\infty\le c_0$
and $\|x'_{T^\complement}\|_\infty\le c_0$, we further have
\begin{align*}
P_{a,p}\left(x'_{T}-x_{T}\right)&\le \frac{a+1}{a}\left\|x'_{T}-x_{T}\right\|_p^p
\le \gamma\frac{a+1}{a}\left\|x'_{T^\complement}-x_{T^\complement}\right\|_p^p\le \gamma\frac{a+(2c_0)^p}{a}P_{a,p}\left(x'_{T^\complement}-x_{T^\complement}\right)\\
&\le \gamma\frac{a+(2c_0)^p}{a}
\left[P_{a,p}\left(x'_T-x_T\right) + P_{a,p}\left(x'\right)-P_{a,p}\left(x\right) +2P_{a,p}\left(x_{T^\complement}\right)\right]
\end{align*}
and hence
\begin{align*}
P_{a,p}\left(x'_{T}-x_{T}\right)
\le \frac{\gamma[a+(2c_0)^p]}{a-\gamma[a+(2c_0)^p]} \left[ P_{a,p}\left(x'\right)-P_{a,p}\left(x\right) +2P_{a,p}\left(x_{T^\complement}\right)\right].
\end{align*}
Altogether, we obtain
\begin{align*}
P_{a,p}\left(x'-x\right)&\le P_{a,p}\left(x'_{T^\complement}-x_{T^\complement}\right)+P_{a,p}\left(x'_{T}-x_{T}\right)\\
& \le C_{a,p,s,\gamma}\left[ P_{a,p}\left(x'\right)-P_{a,p}\left(x\right) +2P_{a,p}\left(x_{T^\complement}\right)\right].
\end{align*}
This finishes the proof of Lemma \ref{lem-NSP}.
\end{proof}

Now, we give the convergence result of Algorithm \ref{algo1}.

\begin{theorem}\label{thm-convergence}
Let  $\{x^n\}_{n\in\mathbb{N}}$ and $\{\epsilon_n\}_{n\in\mathbb{N}}$ be two sequences
generated by Algorithm \ref{algo1},
$s\in\mathbb{N}$ the same positive integer as in Algorithm \ref{algo1},
and the matrix $A$ as in Lemma \ref{lem-NSP}.
If $\epsilon_n\to0^+$ as $n\to\infty$,
then $x^n$ converges to an $s$-sparse vector $x^*$ as $n\to\infty$.
\end{theorem}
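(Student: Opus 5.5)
I will follow the strategy of \cite[Theorem 5.3]{DDFG10}, adapted to $P_{a,p}$. First, by Lemma \ref{prop-J}(v), the sequence $\{x^n\}_{n\in\mathbb{N}}$ is $\ell_\infty$-bounded. Hence it has at least one accumulation point, and every iterate satisfies $Ax^n=y$. Take any convergent subsequence $x^{n_j}\to x^*$. By continuity of $A$, $Ax^*=y$.

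Next I will show that $x^*$ is $s$-sparse. Since $\epsilon_n\to0^+$ and $\epsilon_{n+1}=\min\{\epsilon_n, r(x^{n+1})_{s+1}/\delta\}$, there are infinitely many indices with $\epsilon_{n+1}=r(x^{n+1})_{s+1}/\delta$. Along those indices $r(x^{n+1})_{s+1}\to0$. I would prefer to get $r(x^{n})_{s+1}\le \delta\epsilon_n$ for all large $n$, but only the infinitely-many-indices statement is directly available. So I will choose the convergent subsequence along these indices. By the Lipschitz continuity of the rearrangement (Lemma \ref{lem-Lip}), this gives $r(x^*)_{s+1}=\lim_j r(x^{n_j})_{s+1}=0$, so $x^*$ is $s$-sparse.

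The main step, and the expected obstacle, is upgrading subsequential convergence to convergence of the full sequence. Here I plan to use Lemma \ref{lem-NSP} with $x:=x^*$ and $x':=x^n$. Its hypotheses hold with $c_0$ taken as the uniform $\ell_\infty$ bound from Lemma \ref{prop-J}(v), enlarged if needed so that it also bounds $x^*$; the assumption on $A$ is precisely that of Lemma \ref{lem-NSP} for this $c_0$. Since $x^*$ is $s$-sparse, $\sum_{i>s}\rho_{a,p}(r(x^*)_i)=0$, and the lemma gives
\begin{equation*}
P_{a,p}(x^n-x^*)\le C_{a,p,s,\gamma,c_0}\left[P_{a,p}(x^n)-P_{a,p}(x^*)\right].
\end{equation*}
It therefore suffices to show $P_{a,p}(x^n)\to P_{a,p}(x^*)$. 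For this I use that, by Lemma \ref{prop-J}(ii), $\mathcal{J}_n:=\mathcal{J}_{a,p}(x^n,\omega^n,\epsilon_n)$ is nonincreasing and bounded below, so it converges to some limit $J^*$. By \eqref{eq-equ,J},
\begin{equation*}
0\le \mathcal{J}_n-P_{a,p}(x^n)=(a+1)\sum_{i=1}^N\frac{(|x_i^n|^2+\epsilon_n^\kappa)^{p/2}-|x_i^n|^p}{a+|x_i^n|^p}\le \frac{(a+1)N}{a}\epsilon_n^{\kappa p/2}\to0,
\end{equation*}
where the last inequality uses $(u+v)^{p/2}\le u^{p/2}+v^{p/2}$. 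Hence $P_{a,p}(x^n)\to J^*$. Along the subsequence, continuity gives $P_{a,p}(x^{n_j})\to P_{a,p}(x^*)$, so $J^*=P_{a,p}(x^*)$.

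Consequently $P_{a,p}(x^n-x^*)\to0$. Because $\rho_{a,p}$ is strictly increasing with $\rho_{a,p}(0)=0$, this forces $\|x^n-x^*\|_\infty\to0$, so the whole sequence converges to the $s$-sparse vector $x^*$. The delicate points I expect are confirming that the constant $c_0$ in Lemma \ref{lem-NSP} can be chosen uniformly, and that the selection of the subsequence in the sparsity step is compatible with the limit argument. The second is harmless, because the final bound applies to every $n$ once $x^*$ is fixed.
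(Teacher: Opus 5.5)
Your proposal is correct and follows essentially the same route as the paper: $\ell_\infty$-boundedness from Lemma \ref{prop-J}(v), a subsequence along which $r(x^{n})_{s+1}\to0$ to get $s$-sparsity of $x^*$ via Lemma \ref{lem-Lip}, the identification $\lim_n\mathcal{J}_n=P_{a,p}(x^*)$ through monotonicity and the sandwich $\mathcal{J}_n-\frac{(a+1)N}{a}\epsilon_n^{\kappa p/2}\le P_{a,p}(x^n)\le\mathcal{J}_n$, and finally Lemma \ref{lem-NSP} with $x=x^*$, $x'=x^n$. The only cosmetic difference is that you select indices with $\epsilon_{n+1}=r(x^{n+1})_{s+1}/\delta$, which absorbs the paper's separate case $\epsilon_{n_0}=0$ (where the iterates freeze) and states more precisely the step the paper phrases via a strictly decreasing subsequence $\epsilon_{n_{k+1}}<\epsilon_{n_k}$.
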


\begin{proof}
We consider two cases for $\{\epsilon_n\}_{n\in\mathbb{N}}$.

\textbf{Case 1.} Suppose that there exists some $n_0\in\mathbb{N}$ such that $\epsilon_{n_0}=0$
and $\epsilon_{n_0-1}>0$.
Then we find that, for any $n\ge n_0$, $x^n=x^{n_0}$, which means
$x^*:=\lim_{n\to\infty}x^n=x^{n_0}$.
Furthermore, by $\epsilon_{n_0-1}>0$, we also conclude that $r(x^{n_0})_{s+1}=0$,
which implies that $x^{n_0}$ is $s$-sparse and hence $x^*$ is also $s$-sparse.

\textbf{Case 2.} Suppose that, for any $n\in\mathbb{N}$,  $\epsilon_{n}>0$.
We may choose a subsequence $\{\epsilon_{n_k}\}_{k\in\mathbb{N}}$ such that
$\epsilon_{n_{k+1}}<\epsilon_{n_k}$ for any $k\in\mathbb{N}$.
By Lemma \ref{prop-J}(v), there exists a subsequence of $\{x^{n_k}\}_{k\in\mathbb{N}}$,
which we still denote by $\{x^{n_k}\}_{k\in\mathbb{N}}$ for simplicity,
converging to some  $x^*$ in $\ell_\infty$. Applying Lemma \ref{lem-Lip}, we conclude that
$r(x^{n_k})_{s+1}$ also converges to $r(x^*)_{s+1}$ as $k\to \infty$ and therefore,
by the strictly decreasing property of $\{\epsilon_{n_k}\}_k$,
$$
r(x^*)_{s+1}=\lim_{k\rightarrow\infty}r(x^{n_k})_{s+1}
\le \lim_{k\rightarrow\infty}\delta\epsilon_{n_k -1}=0,
$$
which means that $x^*$ is $s$-sparse.
Next, we show $x^n\to x^*$ as $n\to\infty$.
Indeed, on the one hand, by $x^{n_k}\to x^*$ and $\epsilon_{n_k}\to 0$
as $k\to\infty$ and the equality \eqref{eq-equ,J},
we have
$$\lim_{k\rightarrow\infty}\mathcal{J}_{a,p}\left(x^{n_k},\omega^{n_k},
\epsilon_{n_k}\right)=P_{a,p}(x^*).$$
Since $\mathcal{J}_{a,p}$ is monotonously decreasing, from this we further deduce that
$$\lim_{n\rightarrow\infty}\mathcal{J}_{a,p}\left(x^{n},\omega^{n},
\epsilon_{n}\right)=P_{a,p}(x^*).$$
On the other hand, by the elementary inequality \eqref{eq-elementary},
we find that, for any $n\in\mathbb{N}$,
$$
\mathcal{J}_{a,p}\left(x^{n},\omega^{n},\epsilon_{n}\right)-N\frac{a+1}{a}
\epsilon_n^{\frac{\kappa p}{2}}\le P_{a,p}(x^n)
\le\mathcal{J}_{a,p}\left(x^{n},\omega^{n},\epsilon_{n}\right).
$$
Thus, we obtain $P_{a,p}(x^n)\to P_{a,p}(x^*)$ as $n\to\infty$.
By this and Lemma \ref{lem-NSP}, we further conclude that
$$
\lim_{n\rightarrow\infty}P_{a,p}\left(x^n-x^*\right)\le
C \left[\lim_{n\rightarrow\infty}P_{a,p}(x^n)-P_{a,p}(x^*)\right]=0
$$
with $C$ as in Lemma \ref{lem-NSP}, which shows $x^n\to x^*$ as $n\to\infty$.
This finishes the proof of Theorem \ref{thm-convergence}.
\end{proof}

\subsection{Convergence of DC Algorithm}\label{subsec-convergDCA}

In this subsection, we establish the convergence of Algorithm \ref{alg3} for the sub-problem  \eqref{eq-subproblem}.
We begin with a concept of moduli of strong convexity; see \cite[(5)]{DL98}.

\begin{definition}
Let $f$ be a convex function defined on $\mathbb{R}^N$. The \emph{modulus of strong convexity}
$m(f)$ of $f$ is defined by setting
$$
m(f):=\sup\left\{\rho\in[0,\infty):\ f-\frac{\rho}{2}\|\cdot\|_2^2 \text{ is convex on } \mathbb{R}^N\right\}.
$$
\end{definition}

The following lemma is a part of \cite[Proposition A.1]{DL98}.
\begin{lemma}\label{lem-DC1}
Let $f=g-h$ be a DC decomposition with $m(g)>0$ and $m(h)>0$
and let $\{x^n\}_{n\in\mathbb{N}}$ be a sequence generated by DCA. Then, for any $n\in\mathbb{N}$,
$$
\left\|x^{n+1}-x^n\right\|_2^2 \le \frac{2}{m(g)+m(h)} \left[ f(x^n) - f(x^{n+1})\right].
$$
\end{lemma}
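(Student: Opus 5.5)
The statement to prove is Lemma \ref{lem-DC1}. The plan is to combine two ingredients: the optimality condition of the convex subproblem in DCA, and two strong-convexity inequalities, one for $g$ and one for $h$.

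Recall the DCA step: $v^n\in\partial h(x^n)$, and $x^{n+1}$ minimizes $g(x)-\langle x,v^n\rangle$. By first-order optimality, $v^n\in\partial g(x^{n+1})$.

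\textbf{Inequality for $g$.} Since $g-\frac{m(g)}{2}\|\cdot\|_2^2$ is convex (closedness of the convex set $\{\rho\}$ in the definition of $m$ gives that the supremum is attained, or one works with $\rho<m(g)$ and lets $\rho\to m(g)$), the strong-convexity subgradient inequality at $x^{n+1}$, evaluated at $x^n$, gives
\begin{equation*}
g(x^n)\ge g(x^{n+1})+\langle v^n,x^n-x^{n+1}\rangle+\frac{m(g)}{2}\|x^n-x^{n+1}\|_2^2 .
\end{equation*}

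\textbf{Inequality for $h$.} Similarly, using $v^n\in\partial h(x^n)$ and evaluating at $x^{n+1}$,
\begin{equation*}
h(x^{n+1})\ge h(x^n)+\langle v^n,x^{n+1}-x^n\rangle+\frac{m(h)}{2}\|x^{n+1}-x^n\|_2^2 .
\end{equation*}

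\textbf{Combining.} Adding the two displays cancels the linear terms $\langle v^n,\cdot\rangle$. Rearranging gives
\begin{equation*}
\left[g(x^n)-h(x^n)\right]-\left[g(x^{n+1})-h(x^{n+1})\right]\ge \frac{m(g)+m(h)}{2}\|x^{n+1}-x^n\|_2^2 ,
\end{equation*}
which is exactly the claim, since $f=g-h$.

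The only delicate point is justifying the strong-convexity subgradient inequality with the exact modulus. For a convex $\phi:=g-\frac{\rho}{2}\|\cdot\|_2^2$, a subgradient $v$ of $g$ at $z$ yields the subgradient $v-\rho z$ of $\phi$ at $z$; expanding the squares then gives the $\frac{\rho}{2}\|\cdot\|_2^2$ gain. If $m(g)$ is not attained, we take $\rho<m(g)$ and pass to the limit.

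This handles nonsmooth $g$ and $h$. In the setting of Algorithm \ref{alg3}, $h_{w}$ is differentiable, so there $v^k=\nabla h_w(x^k)$.
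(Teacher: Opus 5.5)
Your proof is correct, and it is the standard argument behind the result that the paper does not prove but imports from \cite[Proposition A.1]{DL98}. The key fact is that $v^n$ is a subgradient of $h$ at $x^n$ and, by optimality of the convex subproblem, also a subgradient of $g$ at $x^{n+1}$, so the two strong-convexity subgradient inequalities add with the linear terms cancelling. Your derivation also shows that only $m(g)+m(h)>0$ is needed, which is consistent with how the paper applies the lemma, via $m(g_w),m(h_w)\ge 2c$, in Theorem \ref{thm-con-DCA}.
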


Now, we establish the convergence  of Algorithm \ref{alg3}.

\begin{theorem}\label{thm-con-DCA}
Let $w:=(w_1,\ldots,w_N)\in\mathbb{R}^N$ be the input weight in
Algorithm \ref{alg3} with each $w_i >0$
and $\{x^n\}_{n\in\mathbb{N}}$ and $\{f_w(x^n)\}_{n\in\mathbb{N}}$ be
two sequences generated by Algorithm \ref{alg3}. Then
the following assertions hold.
\begin{enumerate}
\item[\rm(i)] $\{f_w(x^n)\}_{n\in\mathbb{N}}$ is decreasing and convergent.

\item[\rm(ii)] $\{x^n\}_{n\in\mathbb{N}}$ has the asymptotic regularity
$$
\lim_{n\rightarrow\infty}\left\|x^{n+1}-x^n\right\|_2=0.
$$
\item[\rm(iii)] If $\lambda>\frac{\|w^{-1}\|_\infty \| y \|_2^2}{2(a+1)}$,
then $\{x^n\}_{n\in\mathbb{N}}$ is $\ell_\infty$-bounded
and, for any accumulation point $x^*$ of $\{x^n\}_{n\in\mathbb{N}}$,
$\nabla f_w(x^*)=0.$
\end{enumerate}
\end{theorem}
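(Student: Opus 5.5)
Parts (i) and (ii) will come from Lemma \ref{lem-DC1} applied to the decomposition \eqref{e3.7}, and part (iii) from coercivity of $f_w$ plus passing to the limit in the optimality condition of the DCA step. The first task is to check that $m(g_w)>0$ and $m(h_w)>0$. For $g_w$ this is clear, since it is a quadratic whose Hessian $\frac{2\lambda(a+1)}{a}W+A^TA+2cI\succeq 2cI$, so $m(g_w)\ge 2c$. For $h_w$ I need convexity of $\varphi_w$. Since $\varphi_w$ is separable, it suffices that $t\mapsto \frac{|t|^{p+2}}{a(a+|t|^p)}$ is convex on $\mathbb{R}$. This is even and $C^1$ with derivative vanishing at $0$. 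Writing it as $\frac{1}{a}\big(t^2-\frac{a t^2}{a+|t|^p}\big)$, I will verify convexity by a direct second-derivative computation on $(0,\infty)$, checking that the numerator is a nonnegative combination of powers of $t$. Granting that, $m(h_w)\ge 2c>0$.

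For (i), Lemma \ref{lem-DC1} gives
\[
\|x^{n+1}-x^n\|_2^2\le \frac{2}{m(g_w)+m(h_w)}\,[f_w(x^n)-f_w(x^{n+1})].
\]
Hence $f_w(x^n)$ is nonincreasing, and since $f_w\ge0$ it converges. For (ii), summing the same inequality telescopes to
\[
\sum_n\|x^{n+1}-x^n\|_2^2\le \frac{2f_w(x^0)}{m(g_w)+m(h_w)}<\infty,
\]
so $\|x^{n+1}-x^n\|_2\to0$.

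For (iii), boundedness follows from $f_w(x^n)\le f_w(x^0)=f_w(\mathbf 0)=\frac12\|y\|_2^2$. Each term satisfies
\[
\lambda(a+1)w_i\,\frac{x_i^2}{a+|x_i|^p}\le \tfrac12\|y\|_2^2 .
\]
The function $x_i^2/(a+|x_i|^p)$ grows like $|x_i|^{2-p}\to\infty$ and is strictly increasing in $|x_i|$, so this inequality confines $|x_i^n|$ to a bounded set depending on $\|w^{-1}\|_\infty$, $\lambda$, $a$, $p$ and $\|y\|_2$. Coercivity already gives boundedness without any restriction on $\lambda$. The hypothesis $\lambda>\frac{\|w^{-1}\|_\infty\|y\|_2^2}{2(a+1)}$ makes the bound explicit: it forces $\frac{x_i^2}{a+|x_i|^p}<1$, so $|x_i^n|$ stays below the unique root of $t^2=a+t^p$. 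I will present the explicit version to match the statement.

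For the stationarity claim, the DCA step gives the first-order condition
\[
\nabla g_w(x^{n+1})=\nabla h_w(x^n).
\]
Take a subsequence $x^{n_k}\to x^*$. By (ii), $x^{n_k+1}\to x^*$ as well. Both $\nabla g_w$ and $\nabla h_w$ are continuous, the latter because $\varphi_w$ is $C^1$, as noted in the first paragraph. Passing to the limit gives $\nabla g_w(x^*)=\nabla h_w(x^*)$, that is, $\nabla f_w(x^*)=0$.

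I expect the main obstacle to be the convexity of $|t|^{p+2}/(a+|t|^p)$, which is needed for $m(h_w)>0$. If the second-derivative sign check is not clean, the fallback is to enlarge $c$. However, convexity of the perturbed function with any fixed $c$ requires a uniform lower bound on the second derivative of $\varphi_w$, and I believe that bound is in fact $0$, so this route amounts to the same check.
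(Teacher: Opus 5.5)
Your proposal is correct and follows essentially the paper's route: Lemma \ref{lem-DC1} with $m(g_w),m(h_w)\ge 2c$ for (i) and (ii), the bound $f_w(x^n)\le f_w(\mathbf{0})=\frac12\|y\|_2^2$ for boundedness, and passing to the limit in the DCA optimality condition $\nabla g_w(x^{k+1})=\nabla h_w(x^k)$ for stationarity. The convexity of $t\mapsto |t|^{p+2}/(a+|t|^p)$, which the paper takes for granted, does check out, since for $t>0$ its second derivative is $\frac{(p+1)(p+2)a^2t^p+(p+1)(4-p)at^{2p}+2t^{3p}}{(a+t^p)^3}>0$; your remark that boundedness holds for every $\lambda>0$ by coercivity is also right.
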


\begin{proof}
To prove (i), we decompose $f_w=g_w-h_w$ as in \eqref{e3.7}
and then, by the definitions of $g_w$ and $h_w$,
we easily conclude $m(g_w)\ge 2c$ and $m(h_w)\ge 2c$ with $c>0$
as in the definitions of $g_w$ and $h_w$. From this and
Lemma \ref{lem-DC1}, it follows that
$f_w(x^n) - f_w(x^{n+1})\ge 0$ for any $n\in\mathbb{N}$,
which further implies its decreasing property.
Moreover, this, combined with the fact that $\{f_w(x^n)\}_{n\in\mathbb{N}}$ are nonnegative,
yields the convergence of $\{f_w(x^n)\}_{n\in\mathbb{N}}$. This finishes the proof of (i).

Now, we show (ii). Since $\{f_w(x^n)\}_{n\in\mathbb{N}}$ converges, from Lemma \ref{lem-DC1},
we deduce that
$$
\left\|x^{n+1}-x^n\right\|_2^2 \le \frac{f_w(x^n) - f_w(x^{n+1})}{2c} \rightarrow 0    \quad\text{as}\ n\rightarrow\infty,
$$
which completes the proof of (ii).

To prove (iii), by the decreasing property of $\{f_w(x^n)\}_{n\in\mathbb{N}}$
and $x^0=\mathbf{0}$, we have
$$
\lambda (a+1)\sum_{i=1}^N \frac{|x^n_i|^2}{a+|x^n_i|^p}w_i
+ \frac{1}{2}\left\|Ax^n-y\right\|_2^2 =f_w(x^n)\le f_w(x^0)=\frac{1}{2}\left\| y\right\|_2^2.
$$
This  implies that, for each $i\in\{1,\ldots,N\}$,
$$
 \frac{|x^n_i|^2}{a+|x^n_i|^p} \le\frac{\|w^{-1}\|_\infty \| y \|_2^2}{2\lambda (a+1)},
$$
where $w^{-1}:=(\frac{1}{w_1},\ldots,\frac{1}{w_N})$.
Thus, if $\lambda>\frac{\|w^{-1}\|_\infty \| y \|_2^2}{2(a+1)}$,
we have, for each $i\in\{1,\ldots,N\}$,
either $|x^n_i|\le 1$ or, by $|x_i^n|^2>|x_i^n|^p$ when $|x^n_i|>1$,
$$
|x^n_i|^2\le\frac{a\|w^{-1}\|_\infty \| y \|_2^2}{2\lambda (a+1)-\|w^{-1}\|_\infty \| y \|_2^2},
$$
which means that $\{x^n\}$ is $\ell_\infty$-bounded.

Now, let $\{x^{n_k}\}_{k\in\mathbb{N}}$ be a subsequence of $\{x^n\}_{n\in\mathbb{N}}$ with the limit point $x^*$.
Then, in the $n_k$-th step of the algorithm, by \eqref{e3.6} we have
\begin{align*}
0&= \nabla g_w(x^{n_k})-v^{n_k-1}\\
&=A^T Ax^{n_k}-A^T y + \frac{2\lambda (a+1)}{a} W x^{n_k} + 2c(x^{n_k}-x^{n_k -1})
-\frac{\lambda (a+1)}{a} \nabla\varphi_{w}(x^{n_k-1})  ,
\end{align*}
where $W:=\mathrm{diag}(w_1,\ldots,w_N)$.
Noting $\|x^{n+1}-x^n\|_2\to0 $ as $n\to\infty$
and $\|x^{n_k}-x^*\|_\infty\to0 $  as $k\to \infty$,
by setting $k\to\infty$,
we further obtain
\begin{align*}
0=A^T Ax^*-A^T y + \frac{2\lambda (a+1)}{a} W x^{*}  -\frac{\lambda (a+1)}{a}
\nabla\varphi_{w}(x^*)= \nabla f_w(x^*) ,
\end{align*}
which completes the proof of Theorem \ref{thm-con-DCA}.
\end{proof}

\section{Numerical Experiments}\label{sec-test}

In this section, we   test
the performance of the proposed IRLSTLp algorithm  for the unconstrained $P_{a,p}$ minimization.
All the experiments were performed on a Thinkpad desktop with 32 GB of RAM
and 13-th Generation Intel Core i9-13900H Processor.

In Subsection \ref{subsec-exp-a,p}, we test the IRLSTLp with different parameters $a$ and $p$;
in Subsection \ref{subsec-exp-compareTL1}, we compare the IRLSTLp in the case $p=1$
with the DCA of TL1 minimization;
in Subsection \ref{subsec-exp-compare}, we compare the IRLSTLp
with the following three algorithms:
\begin{enumerate}
\item[\rm(i)]
DCA of TL1 (DCATL1) \cite{ZX18},
\item[\rm(ii)]
DCA of $\ell_1-\ell_2$ (DCA $\ell_1-\ell_2$) \cite{YLHX15},
\item[\rm(iii)]
IRLS of $\ell_{q}$ (IRucLq) \cite{LXY13}.
\end{enumerate}

The true signal $x_0$ is  a randomly generated sparse vector and
 the recovered vector $x$ is regarded as a success one and recorded if the relative error
 $\|x-x_0\|_2/\|x_0\|_2<10^{-3}$.
 For each numerical test,
 we sample 100 times and calculate its success rate.

 The IRLSTLp algorithm includes double loops.
 The stopping conditions for the inner loop are the relative iteration error
 $\frac{\|x^{n+1}-x^n\|_\infty}{\max\{\|x^{n+1}\|_\infty,1\}}<10^{-8}$
 and the maximum iteration steps  $20$
 while the stopping conditions for the outer loop are the $(s+1)$-th largest component in magnitude
 $(|x^{n+1}|)_{s+1}<10^{-8}$,
the relative iteration error
 $\frac{|(|x^{n+1}|)_{s+1}-(|x^{n}|)_{s+1}|}{\max\{(|x^{n}|)_{s+1},1\}}<10^{-8}$,
 and the maximum iteration steps  $2000$.
The starting point of the iteration also needs to be chosen suitably
because the IRLSTLp algorithm cannot guarantee a global optimization
in general due to the nonconvexity of the problem.
In all these experiments, the iteration is initialized by zero vectors
although the success rate will increase if it is initialized surround the true signal.
 Here we only consider the noiseless case
 and the regularization parameter $\lambda=10^{-6}$.

\subsection{RD$_{P_{a,p}}$ and Choices of Parameters $a$ and $p$}\label{subsec-exp-a,p}

The $P_{a,p}$ penalty function involves two key parameters $a$ and $p$.
When  $a$ approaches zero, the $P_{a,p}$ penalty approaches  $\ell_0$;
when $a$ approaches infinity and $p$ approaches 1, the $P_{a,p}$ penalty behaves like $\ell_1$.
Thus, the choice of $(a,p)$ pair  influences the success rate of the proposed  IRLSTLp algorithm.
In this subsection, we use  $64\times 256$ Gaussian random matrices
generated by the normal  distribution $\mathcal{N}(0,I)$ to
compare the numerical results with different $a$ and $p$.
The true signal $x_0$ is  randomly generated with the sparsity level varying from 14 to 32 with step size 2
and the parameter $\kappa$ is chosen as $3$.

\begin{figure}[ht]
  \centering
  \begin{tabular}{cc}
  \includegraphics[width=6cm]{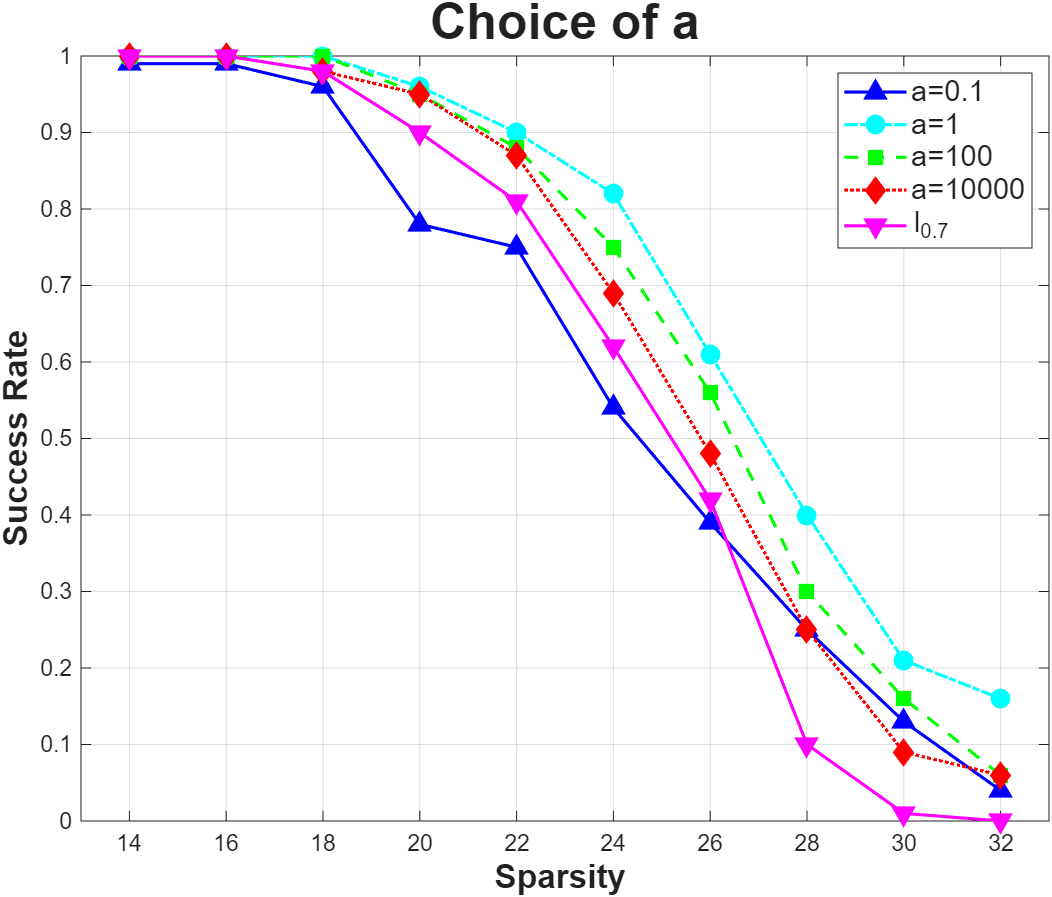}&
  \includegraphics[width=6cm]{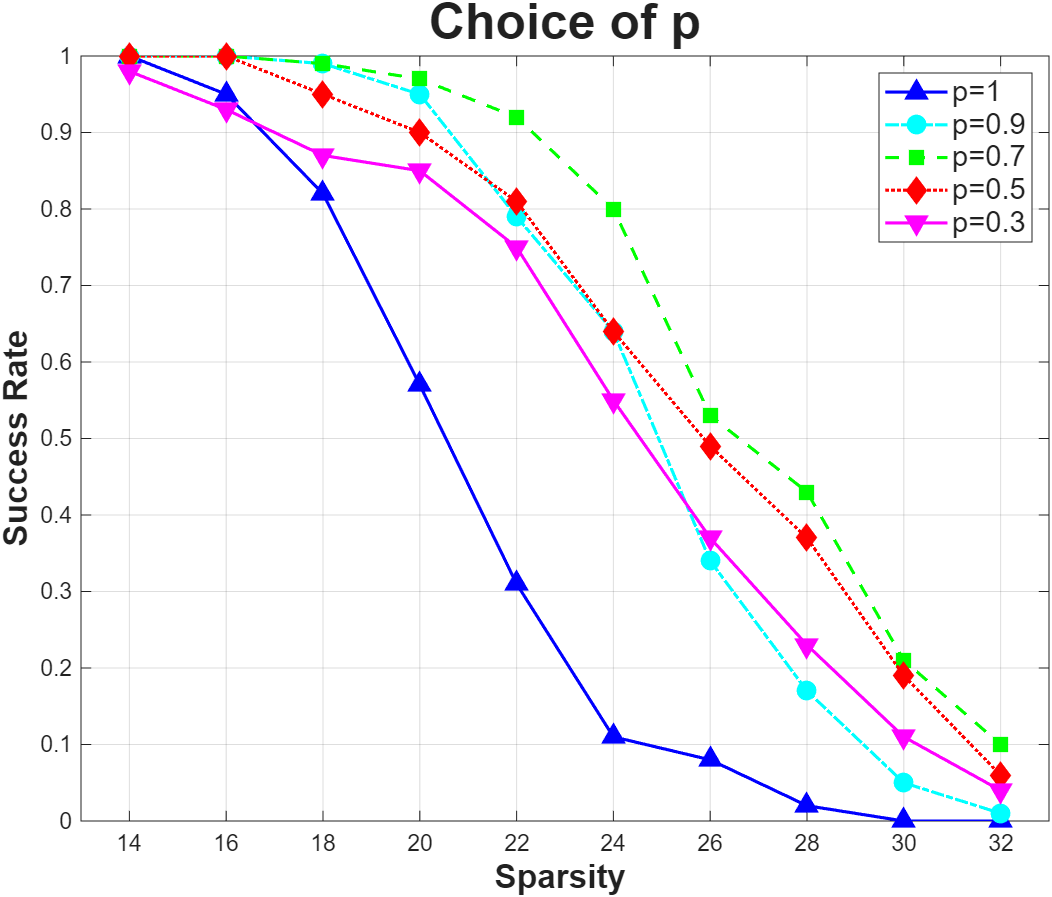}\\
   (a) & (b)
   \end{tabular}
  \caption{Numerical tests on $p$ and $a$ by $64\times 256$ Gaussian matrix}\label{fig-apchoice}
\end{figure}

We first test the IRLSTLp algorithm with different
$a\in\{0.1,1,100,10000\}$
and fixed $p=0.7$.
As the $P_{a,p}$ penalty approaches $P_{p}$ when $a$ approaches infinity,
we also test the IRucLq algorithm for $\ell_{q}$ penalty when $q=0.7$.
Figure \ref{fig-apchoice}(a) presents the success rate.
From this, we can see that, among these tests, the IRLSTLp algorithm with $a=1$ behaves the best.
As `$a$' is getting larger,  the behavior is close to $P_{0.7}$.
As `$a$' is chosen smaller, for instance, $a=0.1$, the success rate obviously decreases.
We also test  the IRLSTLp algorithm
with different $p\in\{0.3,0.5,0.7,0.9,1\}$ and fixed $a=5$,
whose  success rate is presented in  Figure \ref{fig-apchoice}(b).
We find that the algorithm with $p=0.7$ behaves the best among these tests.

Recall that the relaxation degree defined in Definition \ref{rd} can quantitatively measure
the gap between the relaxed model and $\ell_0$ minimization model.
To verify that it is really an effective index related to the performance of the models,
we  conduct some numerical experiments with various pairs of $(a,p)$
but fixed RD$_{P_{a,p}}$ with the sparsity level, for instance, 24,
and record the success rates in Table \ref{figRD1} when RD$_{P_{a,p}}\approx4.2\times 10^{-3}$
and in Table \ref{figRD2} when RD$_{P_{a,p}}\approx2.4\times 10^{-4}$.
We find that the success rates keep almost the same with RD$_{P_{a,p}}$ being fixed
although both  $a$ and $p$ change.

\begin{table}[htbp]
\centering
\begin{tabular}{c|c|c|c|c|c}
\hline
$(a,p)$ & $(0.18,0.9)$ & $(0.3,0.85)$& $(0.54,0.8)$ & $(1.1,0.75)$ & $(4,0.7)$\\
\hline
Success Rate & $76\%$ & $75\%$ & $84\%$ & $79\%$ & $79\%$\\
\hline
\end{tabular}
\caption{Numerical tests with fixed RD$_{P_{a,p}}\approx4.2\times 10^{-3}$ } \label{figRD1}
\end{table}

\begin{table}[htbp]
\centering
\begin{tabular}{c|c|c|c|c|c}
\hline
$(a,p)$ & $(0.12,0.7)$ & $(0.23,0.65)$& $(0.5,0.6)$ & $(1.3,0.55)$ & $(1000,0.5)$\\
\hline
Success Rate & $58\%$ & $63\%$ & $58\%$ & $62\%$ & $67\%$\\
\hline
\end{tabular}
\caption{Numerical tests with fixed RD$_{P_{a,p}}\approx2.4\times 10^{-4}$ } \label{figRD2}
\end{table}

As experimental results of the IRucLq algorithm
by Gaussian random matrices indicate that the case
$p=0.5$ performs the best and Figure \ref{fig-apchoice}(b)
tells that the case $p=0.7$ performs the best,
a guess is that, as $a$ increases, the optimal value
of $p$ is likely to decrease.
To verify this, we test the IRLSTLp algorithm
with  different  $a$ varying among $[1,5]$ and $p$ varying among $\{0.1,0.2,\ldots,0.9,1\}$
when the  sparsity level is $24$.
The result is presented in Figure \ref{fig-heat}(a).
It can be obviously observed that the region filled by the same color is extended to the upper left
and hence it seems that the aforementioned guess is true.
This phenomenon can be explained by the relaxation degree.
As $a$ is a function of $p$ when $N$ and RD$_{P_{a,p}}$ are fixed by Proposition \ref{prop-RD},
we plot the graphs  of $a$ as a function of $p$ with $N=256$
and various RD$_{P_{a,p}}$ in Figure \ref{fig-heat}(b).
One can observe that each curve nearly coincides with a region filled by the same color.
This  means that, to keep the same performance of the model, parameters $a$ and $p$
should satisfy the equality in Proposition \ref{prop-RD} with RD$_{P_{a,p}}$ fixed
and the value of RD$_{P_{a,p}}$
effects the performance of the model.

\begin{figure}[ht]
  \centering
  \begin{tabular}{cc}
  \includegraphics[width=7.4cm]{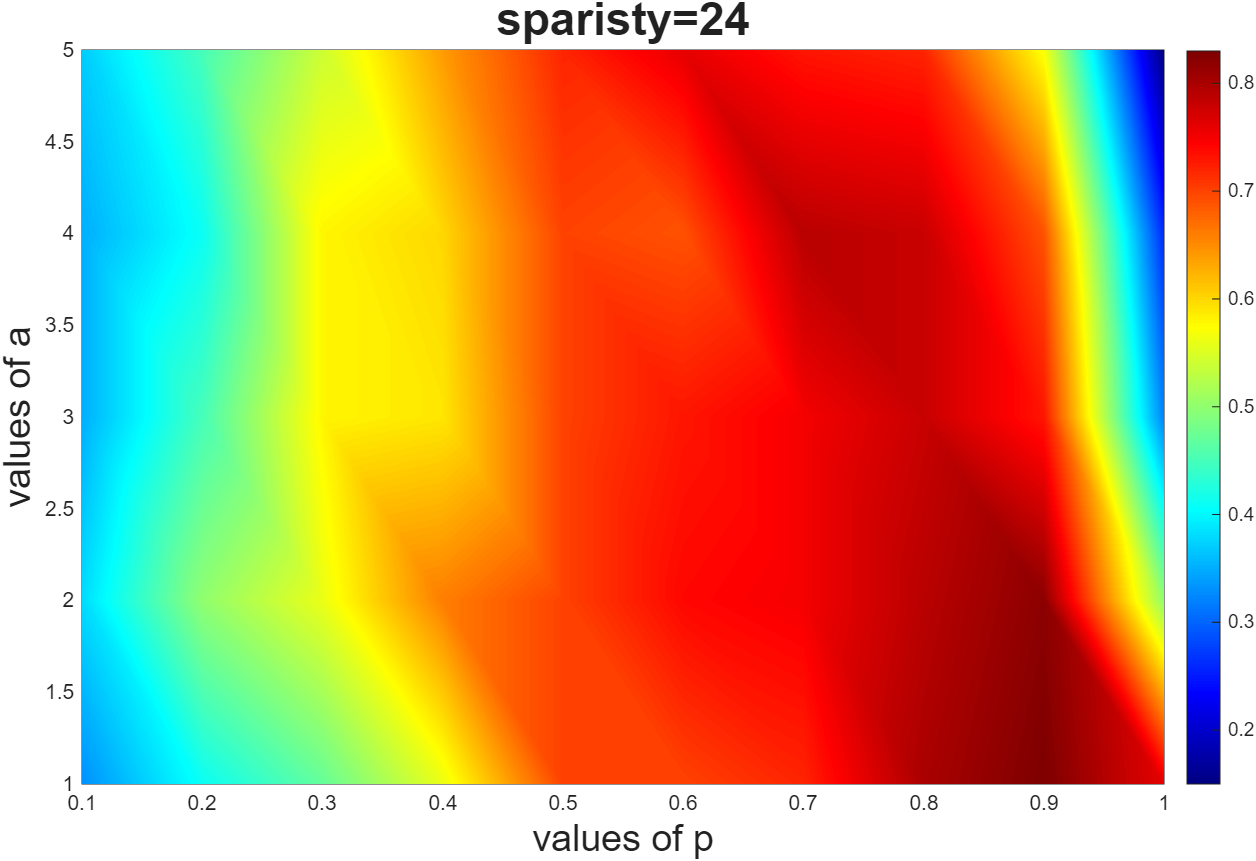}&\quad
  \includegraphics[width=6cm]{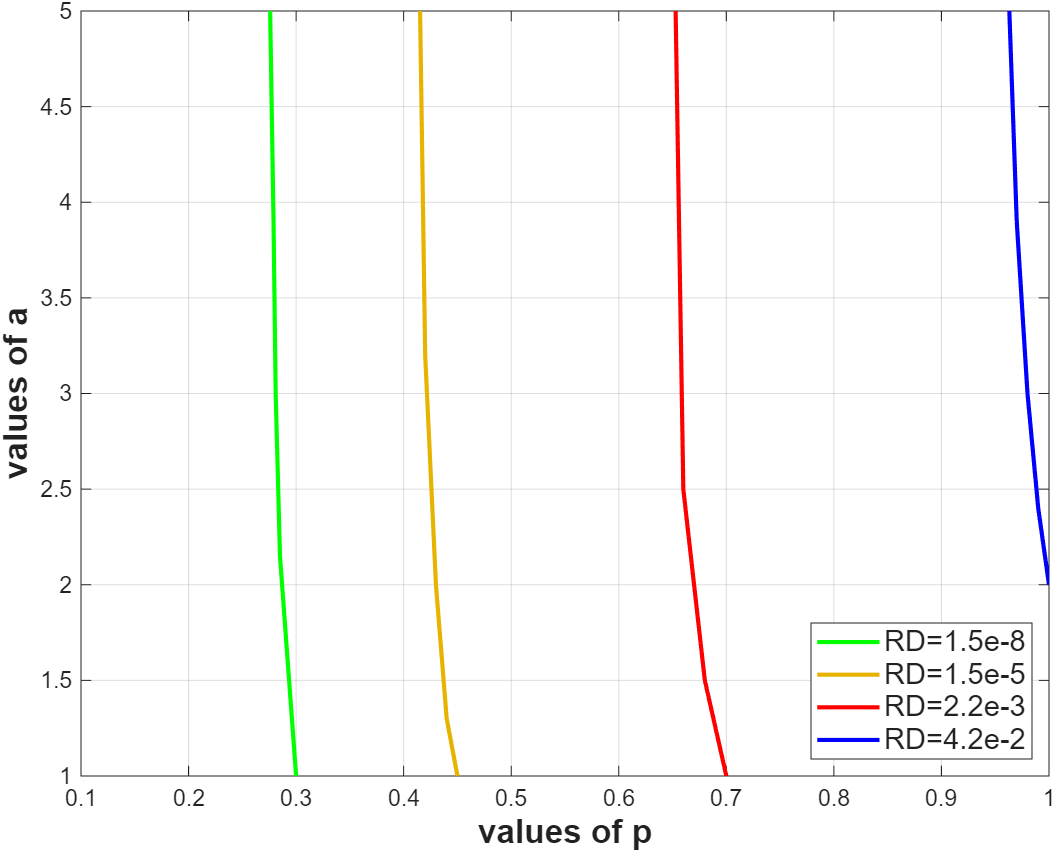}\\
   (a) & \quad (b)
   \end{tabular}
  \caption{(a) Numerical tests by $64\times 256$ Gaussian matrix
  when the sparsity is 24; (b) graphs  of $a$ as a function of $p$ with $N=256$
and various RD$_{P_{a,p}}$}\label{fig-heat}
\end{figure}

\begin{figure}[ht]
  \centering
  \includegraphics[width=8cm]{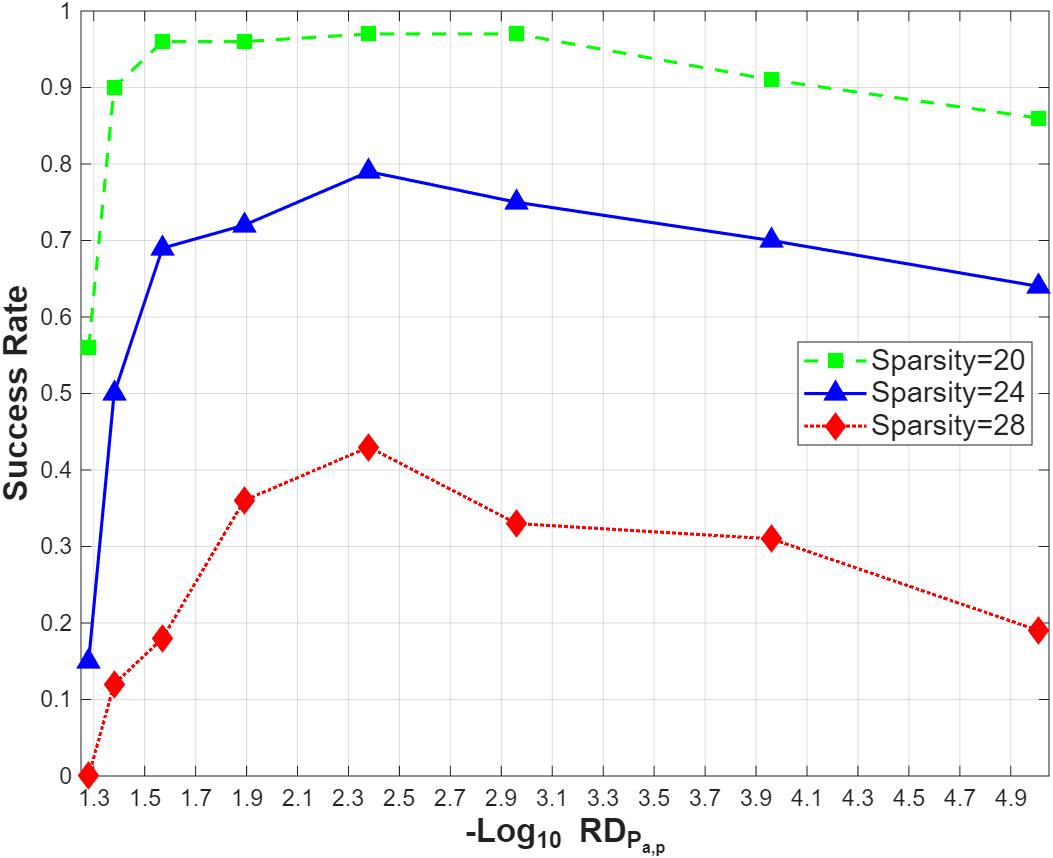}
  \caption{Numerical tests on different RD$_{P_{a,p}}$ by $64\times 256$ Gaussian matrix}\label{fig-RDchoice}
\end{figure}

Based on these analysis,  to furtehr explore the trend of the success rate as RD$_{P_{a,p}}$ changes,
we conduct experiments with various RD$_{P_{a,p}}$
with the sparsity level among $\{20,24,28\}$  and plot the success rates with respect to
the values of negative 10-base logarithm of RD$_{P_{a,p}}$ in Figure \ref{fig-RDchoice}.
From Figure \ref{fig-RDchoice}, we observe that, in all these three cases on sparsity,
the success rate first increases to a peak region and then decreases
as the value of RD$_{P_{a,p}}$ decreases.
This is because increasing  RD$_{P_{a,p}}$ helps recover sparse solutions,
but the problem is much more nonconvex and hence is unfavorable to solve if RD$_{P_{a,p}}$ is too small.

\subsection{Comparison  with DCATL1}\label{subsec-exp-compareTL1}

Zhang and Xin \cite{ZX18} investigated the DC algorithm for the TL1 function
and test  the performance.
Although the $P_{a,p}$  function reduces to the TL1 function when $p=1$,
the IRLSTLp implementation for $p=1$ differs somewhat from the DCATL1.
A performance comparison between these two methods is therefore necessary.
We test  with different parameters $a$ respectively
by $64\times 256$ Gaussian matrices  at the sparsity $k\in\{14,16,\ldots,32\}$
and by $64\times 1024$ Gaussian matrices at the sparsity $k\in\{6,8,\ldots,24\}$.
 The corresponding results are presented in Figure \ref{fig-compare},
which indicate that, when $a=1$, these two algorithms are comparable;
when $a$ is chosen smaller, the   IRLSTLp  performs a little bit better
while, when $a$ is chosen bigger, the IRLSTLp  performs slightly weaker.

\begin{figure}[ht]
  \centering
  \includegraphics[width=4.5cm]{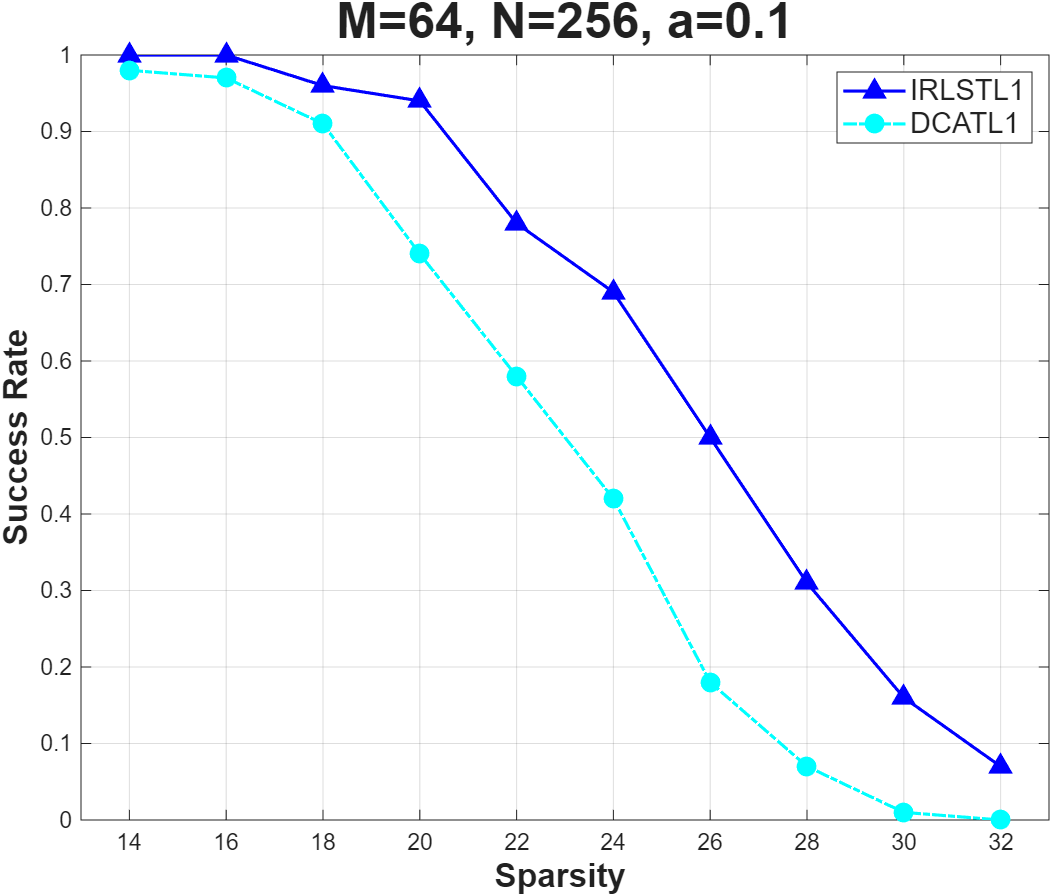}
  \quad
    \includegraphics[width=4.5cm]{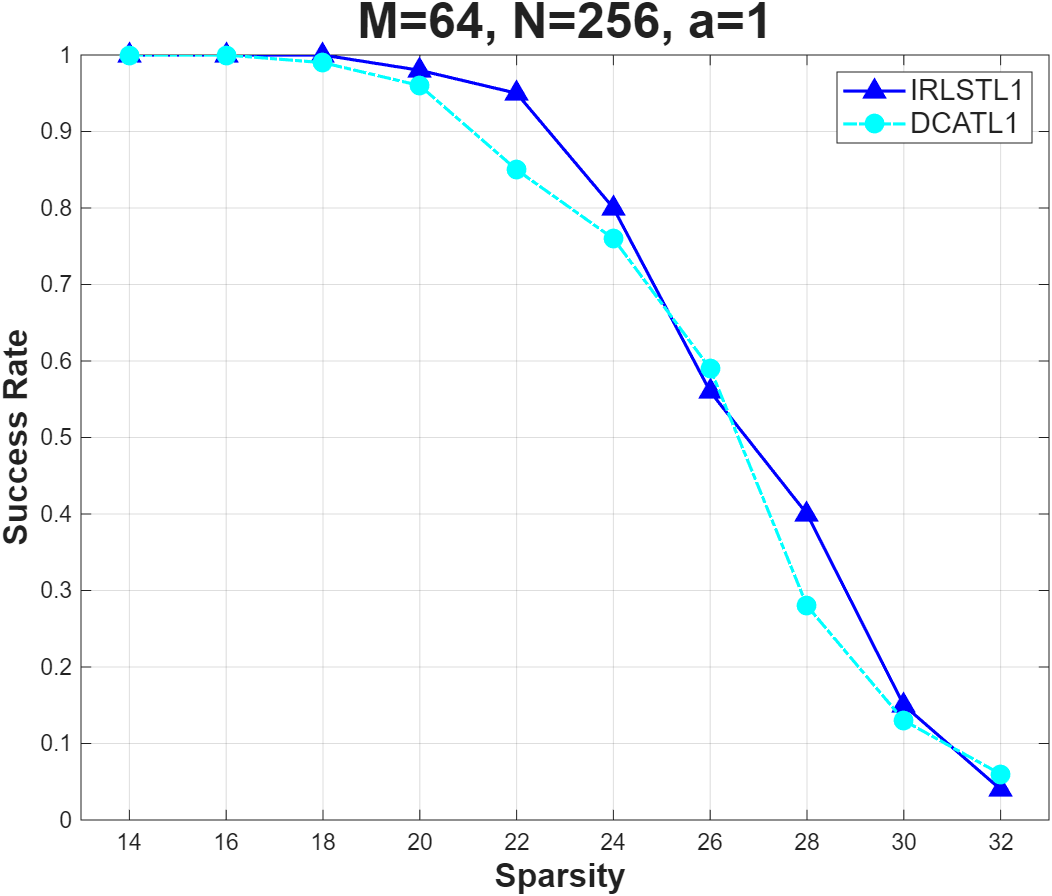}
  \quad
    \includegraphics[width=4.5cm]{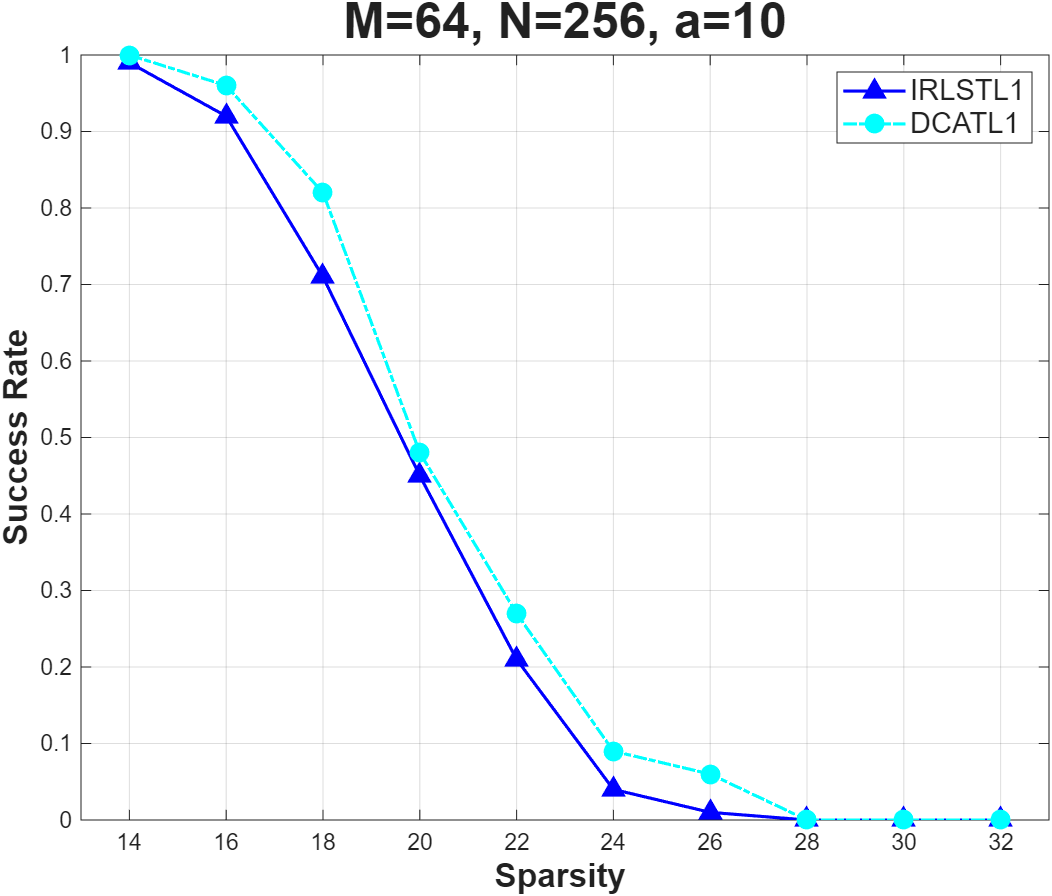}
  \\ \vspace{0.5cm}
      \includegraphics[width=4.5cm]{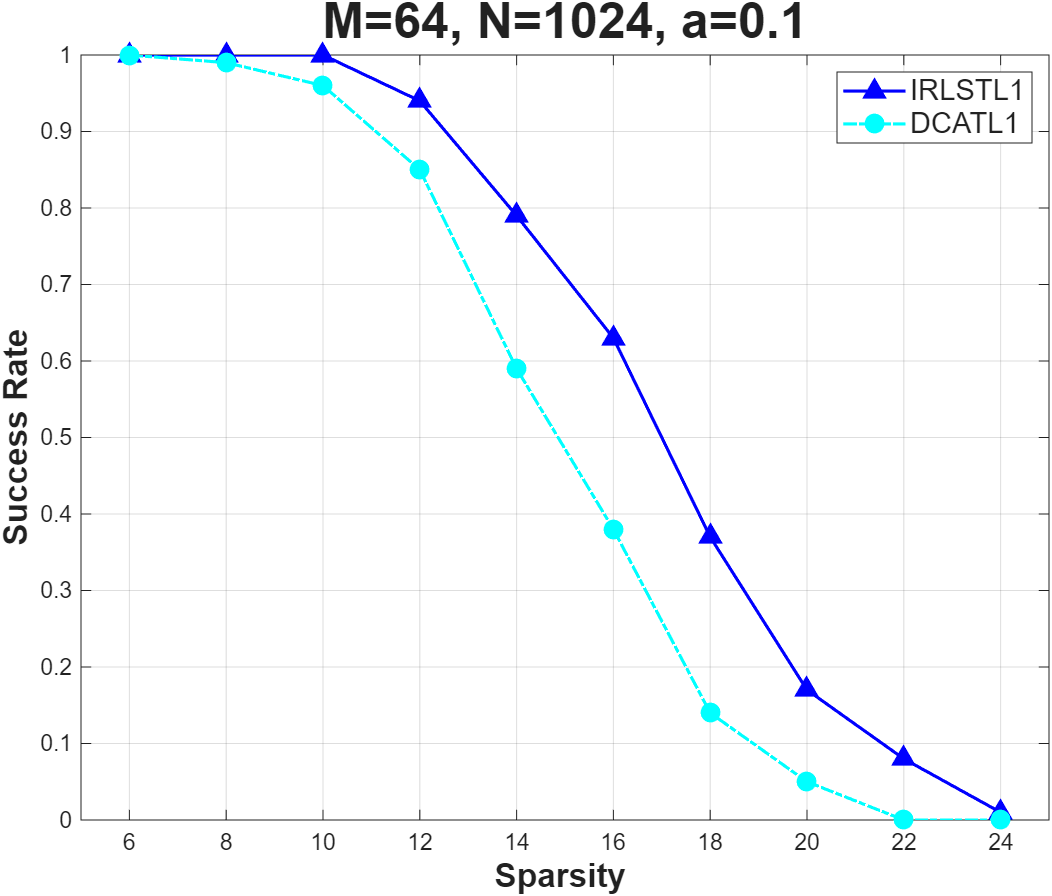}
  \quad
      \includegraphics[width=4.5cm]{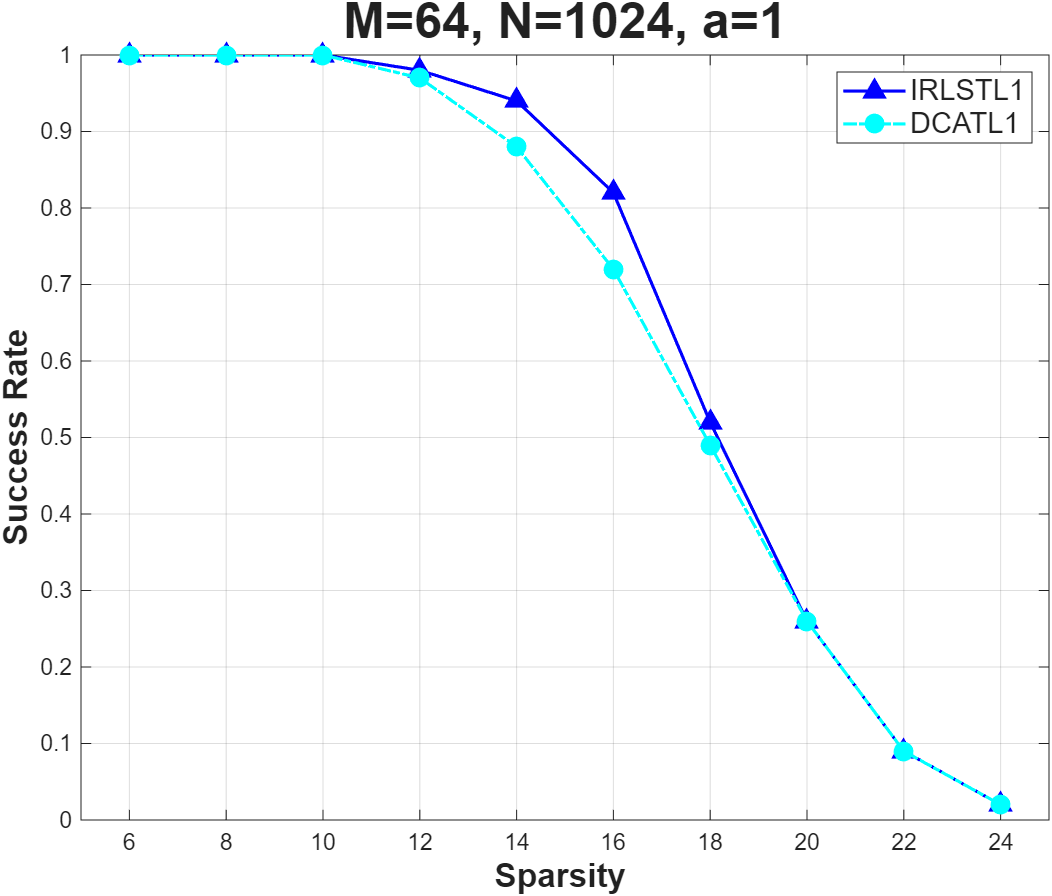}
  \quad
      \includegraphics[width=4.5cm]{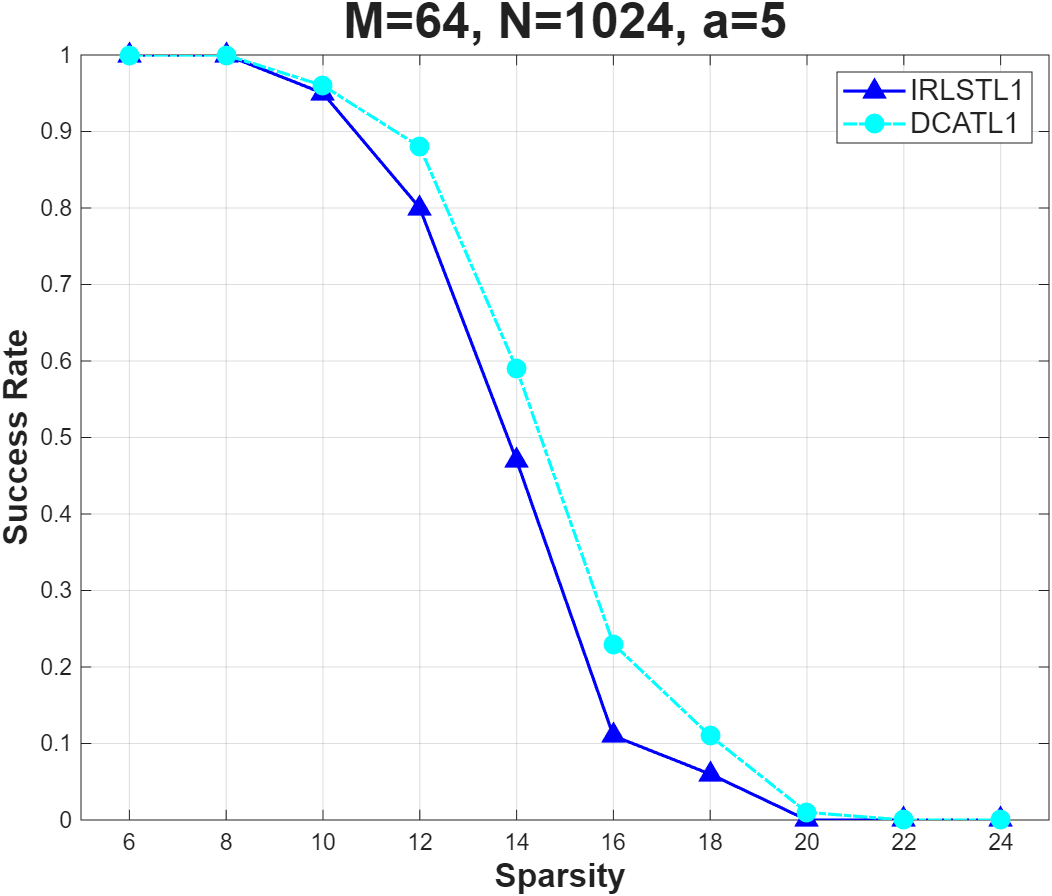}
  \caption{Comparison tests between the IRLSTLp and the DCATL1}\label{fig-compare}
\end{figure}

\subsection{Comparison of $P_{a,p}$ with Various Penalties}\label{subsec-exp-compare}

In this subsection, we use two classes of random matrices to
compare the performance of the IRLSTLp
with  DCATL1, DCA  $\ell_1-\ell_2$,
and IRucLq: Gaussian random matrices in Subsection \ref{s4.3.1} and
over-sampled DCT matrices in Subsection \ref{s4.3.2}.
We always fix the parameter of the TL1 function at $a=1$
and the parameter of the $\ell_{q}$ function at $q=0.5$.

\subsubsection{Tests by Gaussian Random Matrices}\label{s4.3.1}

We use the class of Gaussian matrices generated by the multi-variable normal distribution $\mathcal{N}(0,\Sigma)$ to test the four algorithms above,
where the covariance matrix
$\Sigma:=\{(1-r)\mathbf{1}_{\{i=j\}}+r\}_{i,j}$
with $r\in[0,1)$. Generally speaking, it will be more difficult
to recover the true sparse signal as $r$ gets larger.

We test  by $64\times 256$ Gaussian matrices with
$r\in\{0,0,4,0.8\}$ and plot their success rates  in Figure \ref{fig-Gauss}.
The parameters of IRLSTLp here are $p=0.8$, $a=1$, and $\kappa=3$.
Figure \ref{fig-Gauss} shows that all these four algorithms
are hardly affected by the value of $r$ and,
in each case, the  IRLSTLp, the DCATL1, and the IRucLq are comparable
while the DCA of $\ell_1-\ell_2$ has the lowest success rate.

\begin{figure}[ht]
  \centering
 \includegraphics[width=4.5cm]{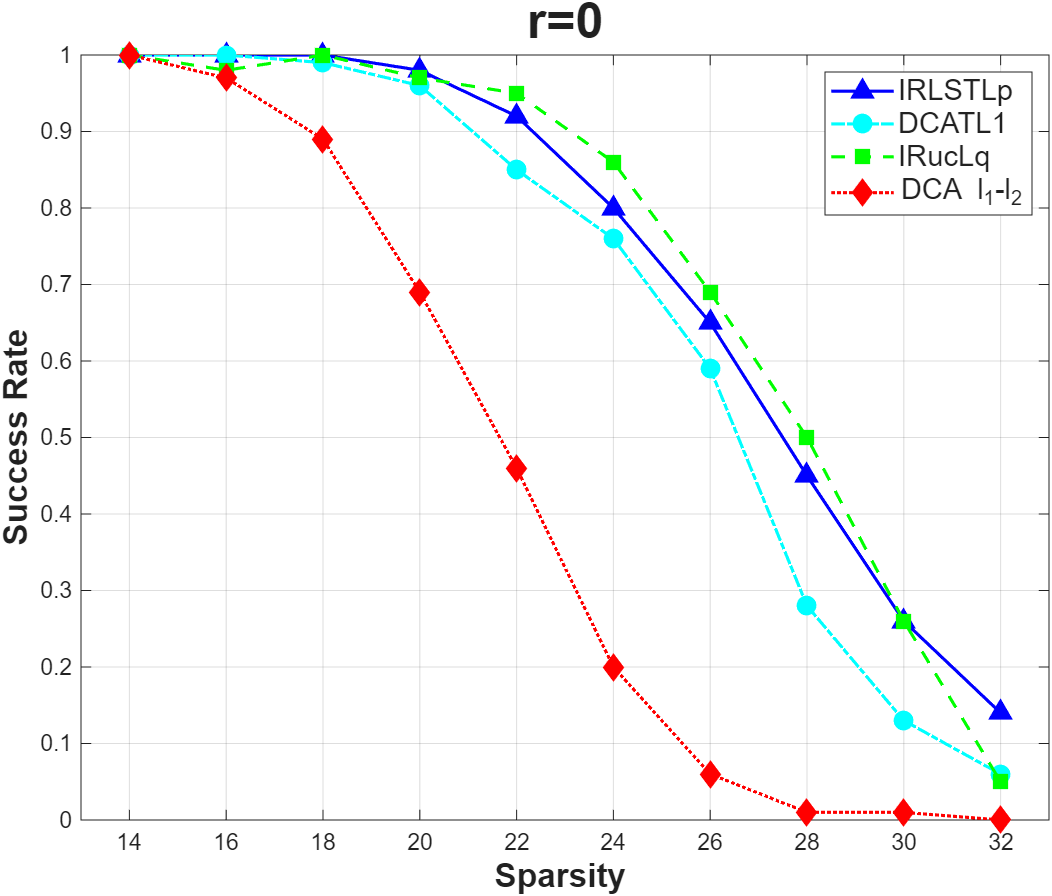}
  \quad
  \includegraphics[width=4.5cm]{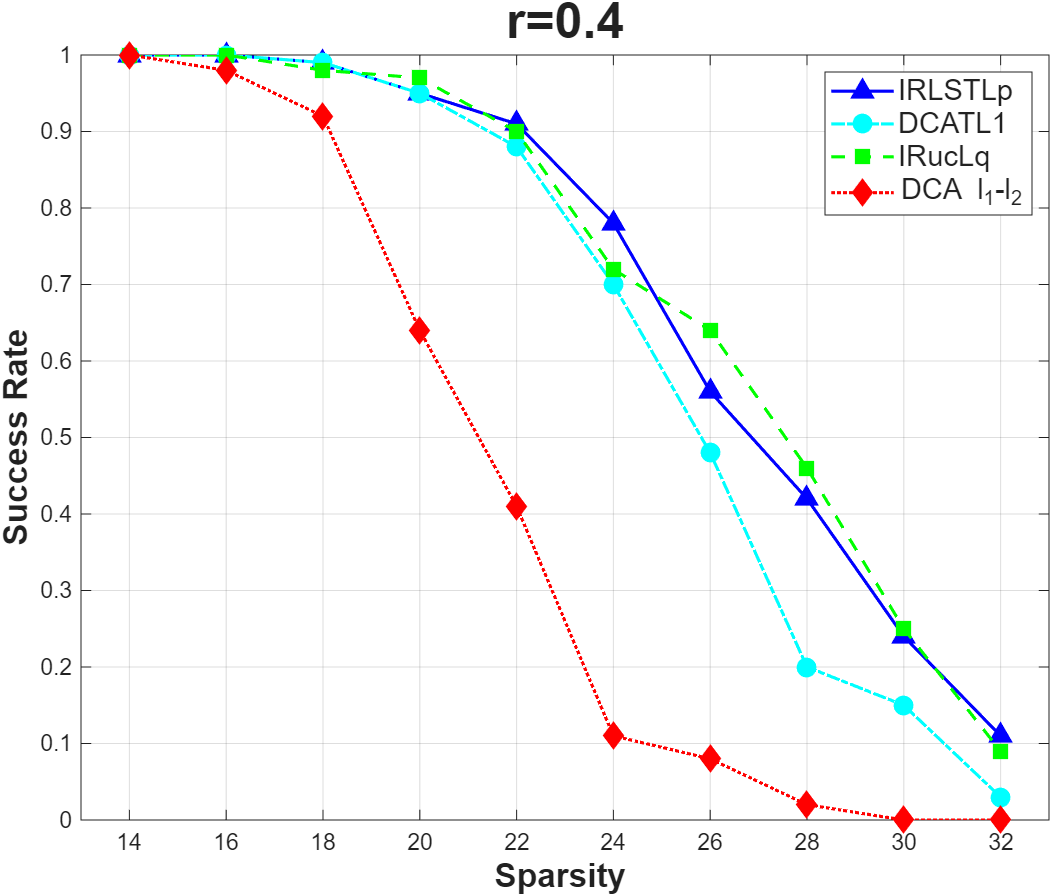}
    \quad
  \includegraphics[width=4.5cm]{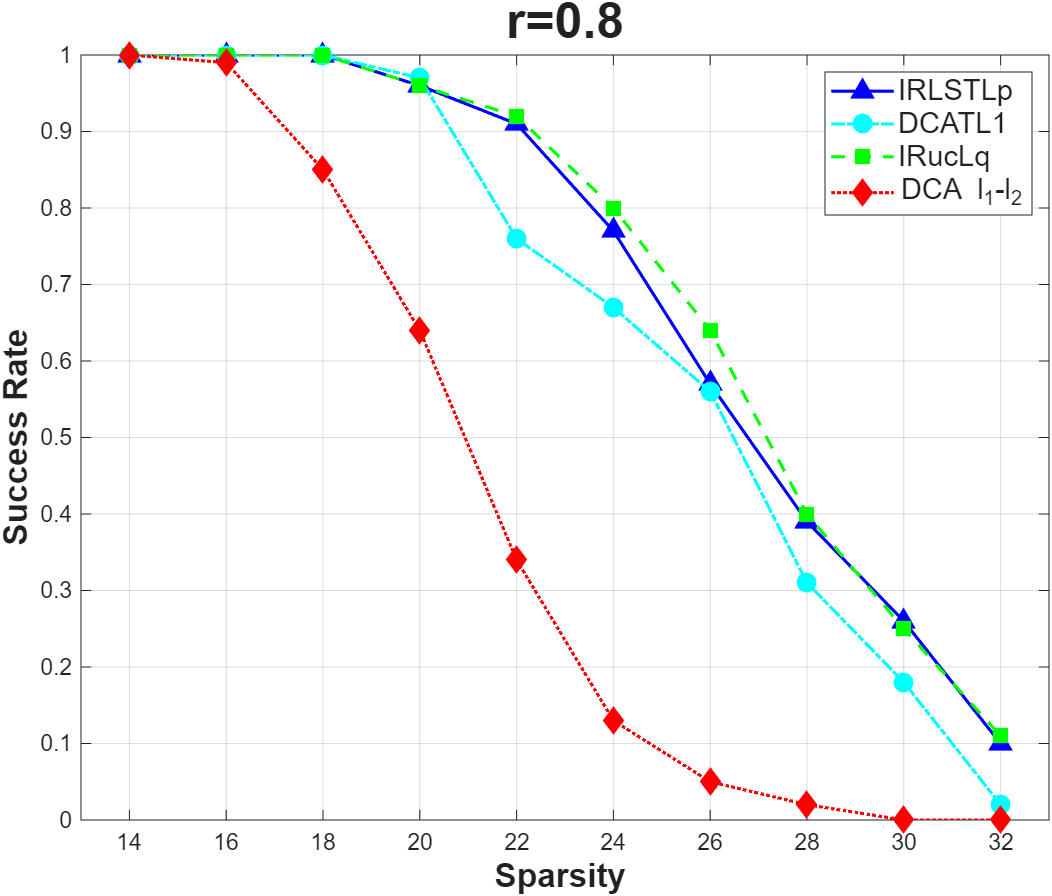}
  \caption{Numerical tests by $64\times 256$ Gaussian matrix with different $r$}\label{fig-Gauss}
\end{figure}

\subsubsection{Tests by Over-Sampled DCT Matrices}\label{s4.3.2}

We  use the class of over sampled DCT matrices $A=(a_1,\ldots,a_N)\in\mathbb{R}^{M\times N}$
to compare the performance of these four  algorithms under varying degrees of matrix coherence,
where, for any $i\in\{1,\ldots,N\}$,
$$
a_i:=\frac{1}{\sqrt{M}}\cos\left(\frac{2\pi[i-1]\omega}{F}\right),
$$
$\omega$ is a random vector uniformly from $(0,1)^M$, and $F>0$ is the frequency parameter.
A known result is that the DCA of $\ell_1-\ell_2$ has a nice sparse recovery performance
when the sensing matrix is highly coherent; see, for example, \cite{LYHX15}.
A key property of these over-sampled DCT matrices is precisely their high coherence.
For instance, the coherence of a $100\times 1000$ over-sampled DCT matrix with $F=20$ is approximately $0.9999$.
We refer the reader to \cite{FL12} for more descriptions of such over-sampled DCT matrices.

We use $100\times1500$ over-sampled DCT matrices with  $F\in\{2,6,8,10,16,20\}$
to test these four algorithms and present their success rates in Figure \ref{fig-DCT}.
As we find that the  IRLSTLp algorithm with the same $a$ and $p$
and the parameter $\kappa$ in Algorithm \ref{alg2} performs variously
when the matrix has different degrees of coherence,
we use these parameters at each case of $F$ as in Table \ref{fig0}.

\begin{table}[htbp]
\centering
\begin{tabular}{c|c|c|c|c|c|c}
\hline
$F$  & 2 & 6 & 8 & 10 & 16& 20\\
\hline
$(a,p)$ & $(1,1)$ & $(2,0.95)$& $(3,0.99)$ & $(8,0.99)$ & $(20,0.99)$ &$(100,0.99)$\\
\hline
$\kappa$ & $3$ & $3$ & $3$ & $4$ & $4$ & $4$\\
\hline
\end{tabular}
\caption{ Choices of parameters $a$, $p$, and $\kappa$} \label{fig0}
\end{table}

By Figure \ref{fig-DCT},  we find that: i) The IRucLq performs well for low coherent matrices
but performs worse and worse as the coherence becomes larger and larger;
especially, when $F\ge 16$, it almost fails to recover the sparse solution.
ii) DCA of $\ell_1-\ell_2$ performs better and better as the coherence
becomes larger and larger, although it has the lowest success rate when $F=2$.
iii) DCATL1 is hardly affected by the coherence of the matrix or, in other words,
is more robust, although it is not the best in some occasion.
iv) By adjusting the parameters $a$ and $p$, the IRLSTLp can perform
well in both low coherent matrices
and high coherent matrices.
Especially, in some occasion, for instance, $F=10$, the IRLSTLp behaves
better than DCATL1.

\begin{figure}[ht]
  \centering
  \includegraphics[width=4.5cm]{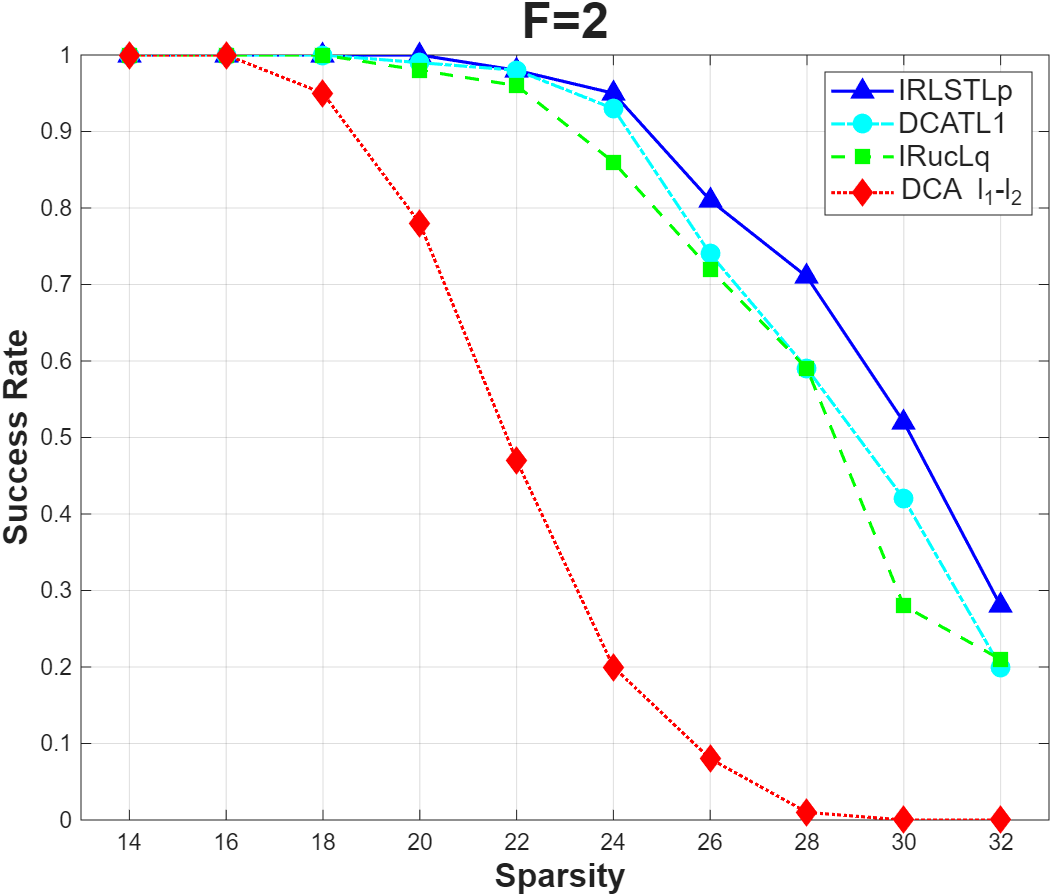}
  \quad
  \includegraphics[width=4.5cm]{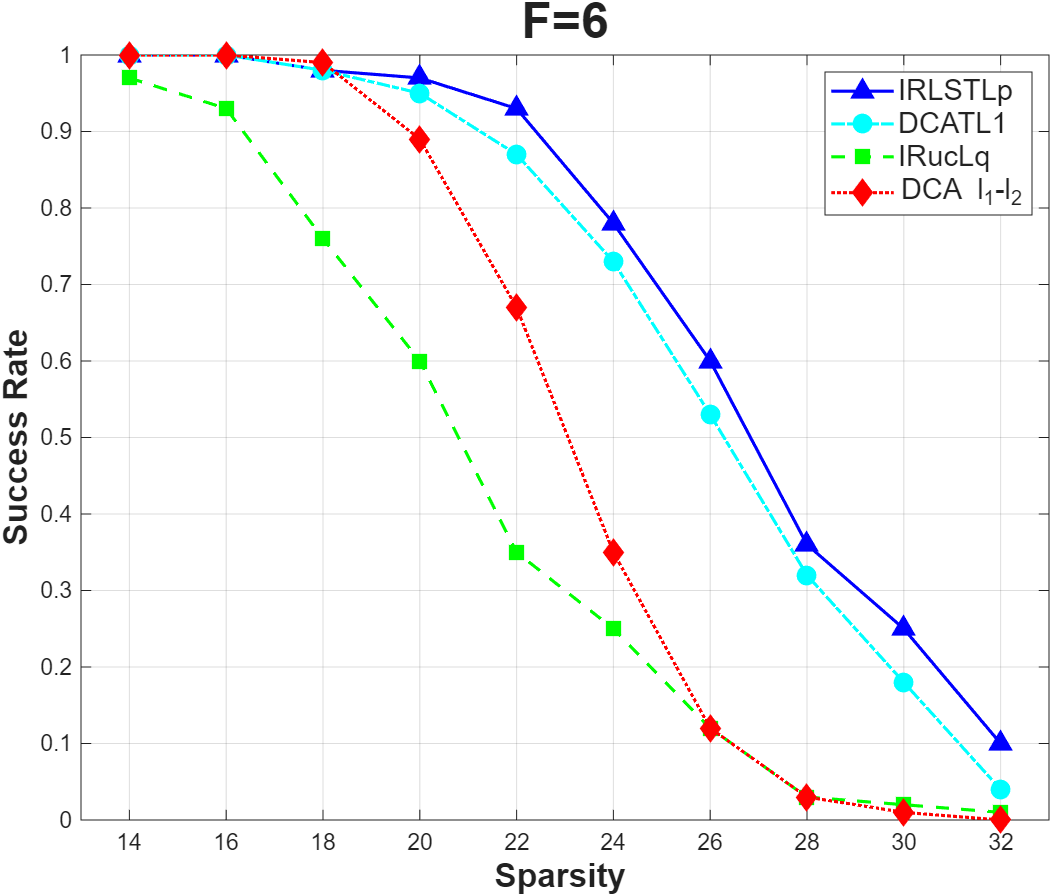}
  \quad
  \includegraphics[width=4.5cm]{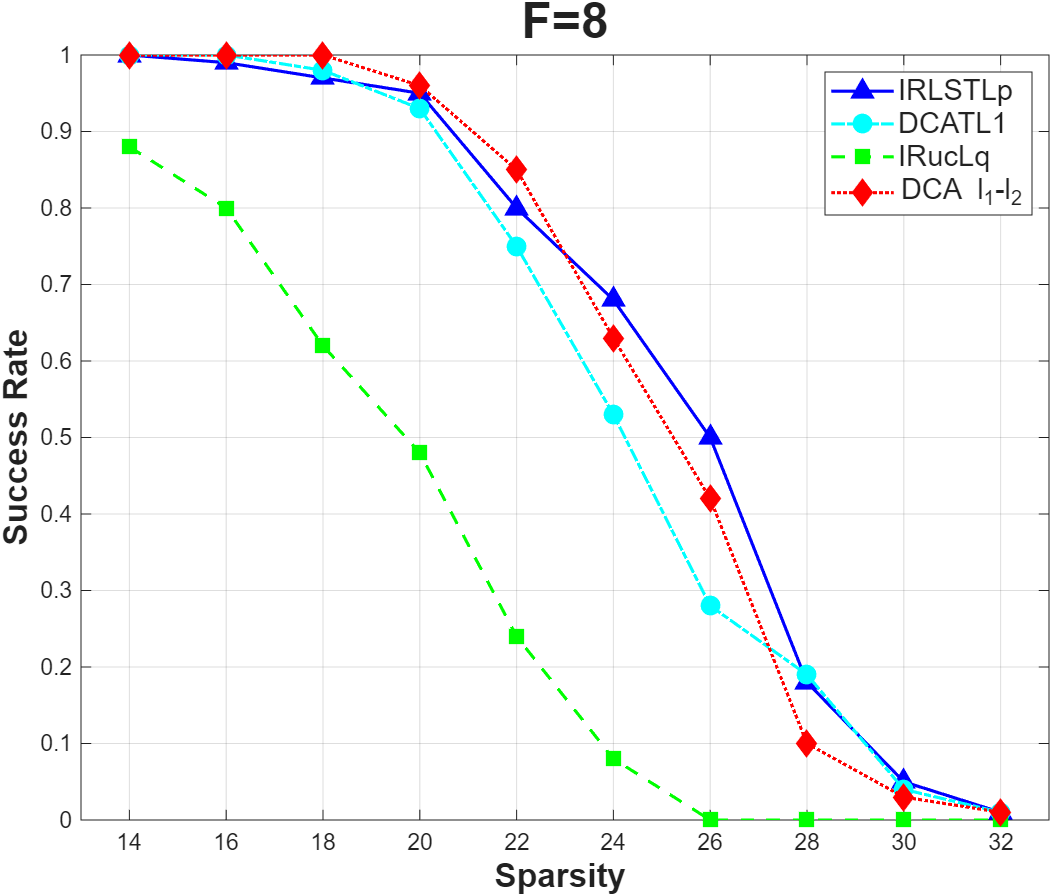}\\
  \vspace{0.5cm}
  \includegraphics[width=4.5cm]{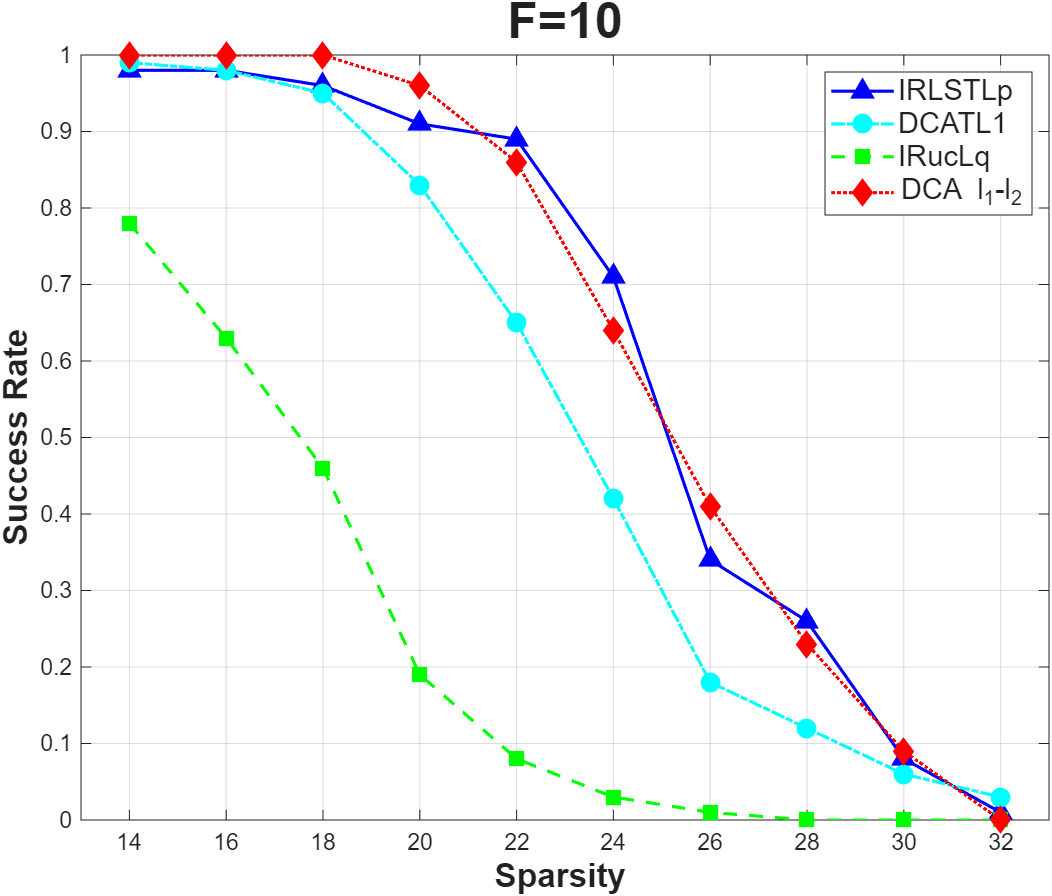}
 \quad
  \includegraphics[width=4.5cm]{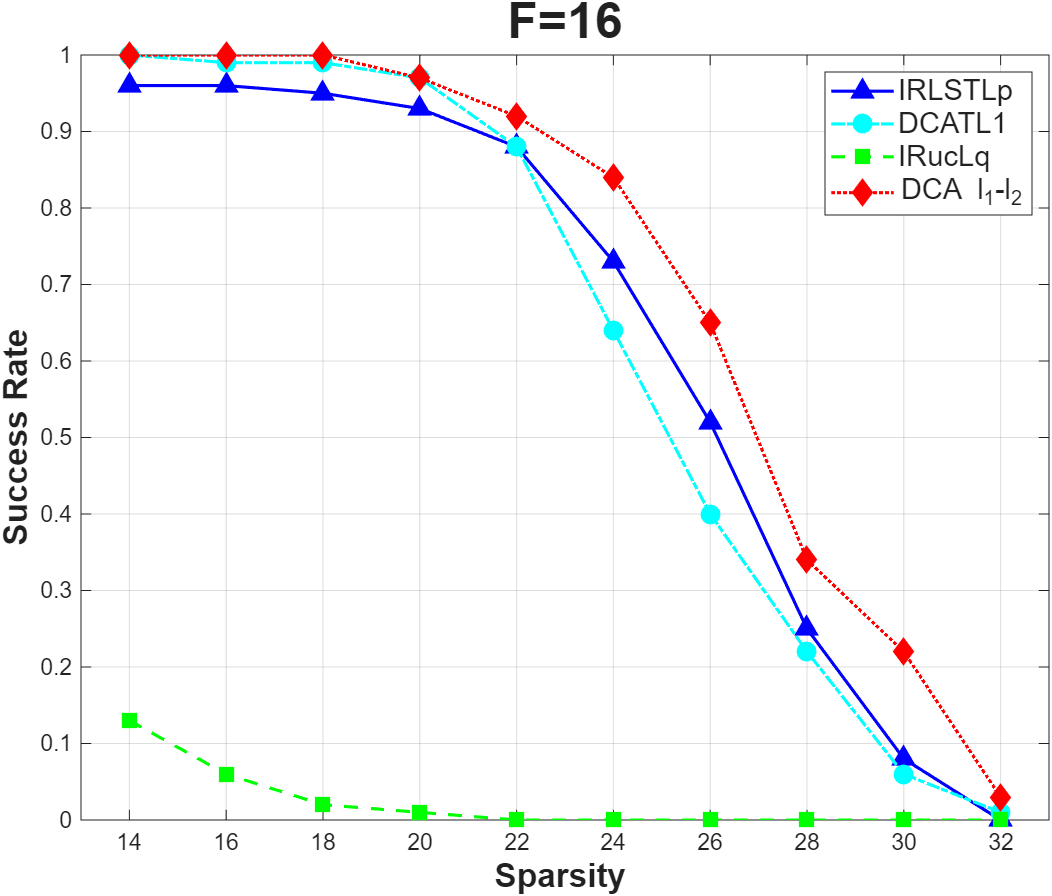}
   \quad
  \includegraphics[width=4.5cm]{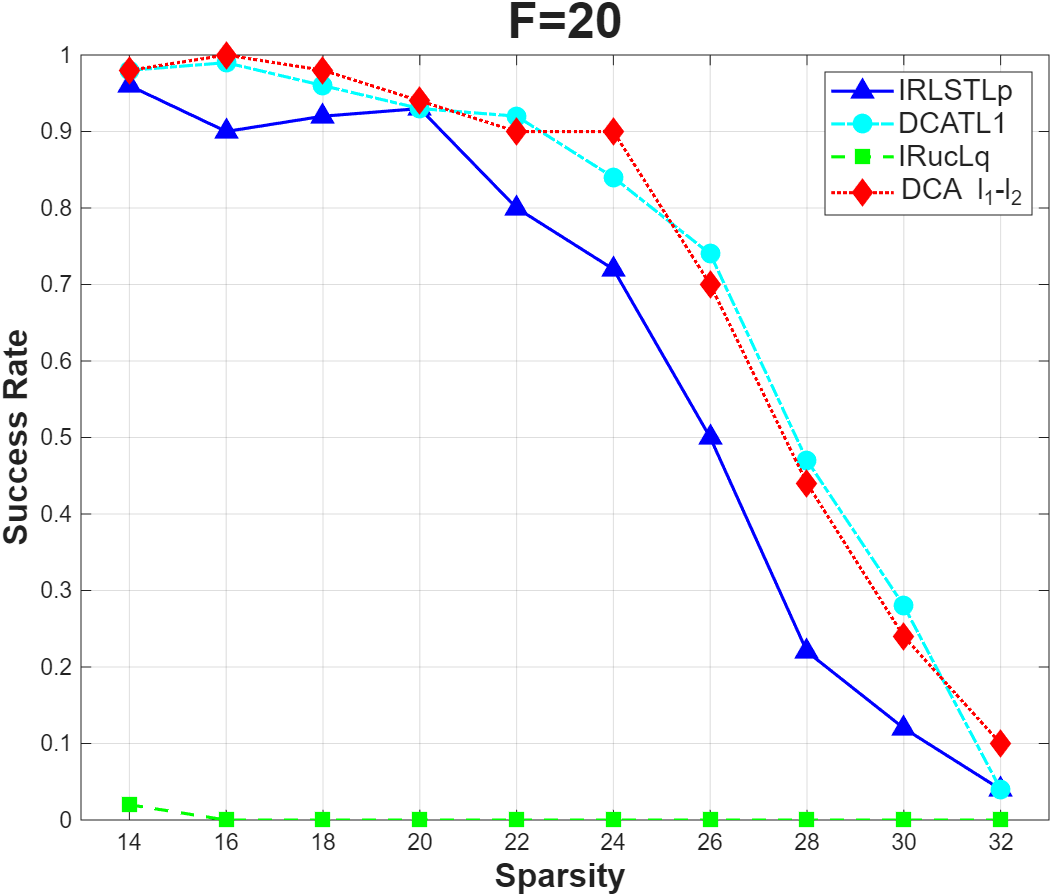}
  \caption{Numerical tests by  $100\times 1500$ over-sampled DCT matrix with different $F$ }\label{fig-DCT}
\end{figure}

\section{Conclusions}\label{s5}

This article introduces a minimization framework via a nonconvex
two-parameter penalty function $P_{a,p}$ with
$a\in(0,\infty)$ and $p\in (0,1]$,
which reduces to the TL1 minimization model investigated by Zhang and Xin in \cite{ZX17,ZX18}.
We propose the concept of the relaxation degree RD$_P$ of
a penalty function $P$, which is one of the novelty of this article.
This relaxation degree RD$_P$ provides a quantitative measure of how
closely a separable penalty function $P$ approximates $\ell_0$, which is
very effective and helpful, especially when penalty functions are hard
to visually distinguish. Using the relaxation degree, we prove that
the proposed $P_{a,p}$ has a higher approximation degree to $\ell_0$ compared with $P_p$
and $\ell_a^p$. Moreover, applying the sparse convex-combination technique
developed by Cai and Zhang in \cite{CZ14} and, independently, by
G. Xu and Z. Xu in \cite{xx13} for $\ell_1$ and Zhang and
Li in \cite{ZL19} for $\ell_p$ with $p\in (0,1]$, we also establish the exact
and the stable sparse signal recovery based on the restricted isometry
property (RIP), whose upper bound reduces, when $p\in (0,1]$ and
as $a\to\infty$, to the sharp RIP bound obtained
by Zhang and  Li in \cite{ZL19} and, especially, recovers the well-known sharp bound
$\delta_{2s}<\frac{\sqrt{2}}{2}$ when $p=1$.

The IRLSTLp algorithm discussed in this article consists of
a modified iteratively re-weighted least squares
method and the difference of convex functions algorithm (DCA).
We conduct some numerical experiments to show the nice performance of the IRLSTLp algorithm
and also the robustness of the IRLSTLp algorithm
under different degrees of matrix coherence by  adjusting parameters.
These experimental results precisely reflex the
flexibility and the stronger sparsity-promotion
capability of the proposed $P_{a,p}$ minimization framework.

\bigskip

\noindent\textbf{Author Contributions}\quad All authors have contributed
equally to the writing of this manuscript. All authors
have reviewed the manuscript.

\medskip

\noindent\textbf{Funding}\quad This project is partially supported by the National
Natural Science Foundation of China (Grant Nos. 12431006, 12371093, and 12371094),
the Beijing Natural Science Foundation (Grant No. 1262011), and
the Fundamental Research Funds for the Central Universities (Grant No. 2253200028).

\medskip

\noindent\textbf{Data Availability}\quad No datasets were
generated or analysed during the current study.

\section*{Declarations}

\noindent\textbf{Conflicts of Interest}\quad There is no conflict
of interest for any of the authors of this article.
\medskip

\noindent\textbf{Competing interests}\quad The authors declare no competing interests.

\bigskip

\noindent Ziwei Li

\medskip

\noindent Institute of Applied Physics and Computational Mathematics,
Beijing 100088, The People's Republic of China

\smallskip

\noindent{\it ORCID:} \texttt{https://orcid.org/0009-0009-4208-3856}

\noindent{\it E-mail:} \texttt{zwli@buct.edu.cn}

\bigskip

\noindent Wengu Chen

\noindent Institute of Applied Physics and Computational Mathematics,
Beijing 100088, The People's Republic of China

\smallskip

\noindent{\it ORCID:} \texttt{https://orcid.org/0000-0002-1751-0379}

\noindent{\it E-mail:} \texttt{chenwg@iapcm.ac.cn}
\bigskip

\noindent Huanmin Ge

\medskip
\noindent School of Sports Engineering, Beijing Sport University, Beijing, 100084, China
 The People's Republic of China

\smallskip

\noindent{\it ORCID:} \texttt{https://orcid.org/0009-0005-8843-5221}

\noindent{\it E-mail:} \texttt{gehuanmin@bsu.edu.cn}
\bigskip

\noindent Dachun Yang (Corresponding author)

\medskip

\noindent Laboratory of Mathematics and Complex Systems (Ministry of Education of China),
School of Mathematical Sciences, Institute for Advanced Study,
Beijing Normal University, Beijing 100875,
The People's Republic of China

\smallskip

\noindent{\it ORCID:} \texttt{https://orcid.org/0000-0001-9024-3345}

\noindent{\it E-mail:} \texttt{dcyang@bnu.edu.cn}


\begin{thebibliography}{99}

\bibitem{BDDW08} R. Baraniuk, M. Davenport, R. DeVore and M. Wakin, A simple proof
of the restricted isometry property for random matrices, Construct.
Approx.  28 (2008), 253--263.

\vspace{-0.3cm}

\bibitem{CWX10} T. Cai, L. Wang and G. Xu, New bounds for restricted isometry constants,
IEEE Trans. Inform. Theory 56 (2010), 4388--4394.

\vspace{-0.3cm}

\bibitem{CZ13} T. Cai and A. Zhang, Sharp RIP bound for sparse signal and
low-rank matrix recovery,
 Appl. Comput. Harmon. Anal. 35 (2013), 74--93.

\vspace{-0.3cm}

\bibitem{CZ14} T. Cai and A. Zhang, Sparse representation of a polytope
and recovery in sparse signals and low-rank matrices,
 IEEE Trans. Inform. Theory 60 (2014), 122--132.

\vspace{-0.3cm}

\bibitem{CRT06a} E. J. Cand\`{e}s, J. K.  Romberg and T. Tao,
Stable signal recovery from incomplete and inaccurate measurements,
Comm. Pure Appl. Math. 59 (2006), 1207--1223.

\vspace{-0.3cm}

\bibitem{CRT06b} E. J. Cand\`{e}s, J. K.  Romberg and T. Tao,
Robust uncertainty principles: exact signal reconstruction from highly incomplete frequency information, IEEE Trans. Inform. Theory 52 (2006), 489--509.

\vspace{-0.3cm}

\bibitem{CT05} E. J. Cand\`{e}s and T. Tao,  Decoding by linear programming,
IEEE Trans. Inform. Theory 51 (2005), 4203--4215.

\vspace{-0.3cm}

\bibitem{CS08} R. Chartrand and V. Staneva,  Restricted isometry properties
and nonconvex compressive sensing,
Inverse Problems 24 (2008), Paper No. 035020, 14 pp.

\vspace{-0.3cm}

\bibitem{CL19} W. Chen and Y. Li,  Recovery of signals under the
condition on RIC and ROC via prior support information,
Appl. Comput. Harmon. Anal. 46 (2019), 417--430.

\vspace{-0.3cm}

\bibitem{CLW18} W. Chen, Y. Li and G. Wu, Recovery of signals under the high order
RIP condition via prior support information, Signal Processing 153 (2018),83--94.

\vspace{-0.3cm}

\bibitem{CDD08} A. Cohen, W. Dahmen and R. DeVore, Compressed
sensing and best $k$-term approximation,
J. Amer. Math. Soc. 22 (2009),  211--231.

\vspace{-0.3cm}

\bibitem{DDFG10} I. Daubechies, R. DeVore, M. Fornasier and C. S. G\"{u}nt\"{u}rk,
   Iteratively reweighted least squares minimization for sparse recovery,
   Comm. Pure Appl. Math. 63 (2010), 1--38.

\vspace{-0.3cm}

\bibitem{D06} D. Donoho, Compressed sensing, IEEE Trans. Inform. Theory 52 (2006),
1289--1306.

\vspace{-0.3cm}

\bibitem{FL12} A. Fannjiang and W. Liao,  Coherence pattern-guided
compressive sensing with unresolved grids, SIAM J. Imaging Sci. 5 (2012),
179--202.

\vspace{-0.3cm}

\bibitem{FR13} S. Foucart and H. Rauhut, A Mathematical Introduction to Compressed Sensing, Birkh\"{a}user, Boston, 2013.

\vspace{-0.3cm}

\bibitem{GCN20} H. Ge, W. Chen and M. K.  Ng, New RIP bounds for
 recovery of sparse signals with partial support information via
 weighted $\ell_p$-minimization, IEEE Trans. Inform. Theory 66 (2020),
 3914--3928.

\vspace{-0.3cm}

\bibitem{GCN21b} H. Ge, W. Chen and M. K. Ng,  On recovery of
sparse signals with prior support information via weighted
$\ell_p$-minimization, IEEE Trans. Inform. Theory 67 (2021),
7579--7595.

\vspace{-0.3cm}

\bibitem{HL25} G. Huang and S. Li,  Low-rank Toeplitz matrix restoration: descent cone analysis and structured random matrix,
IEEE Trans. Inform. Theory 71 (2025), 3950--3956.

\vspace{-0.3cm}

\bibitem{HLZLT25} J. Huang, X. Liu, F, Zhang, G. Luo and  R. Tang, Performance analysis of unconstrained
 minimization for sparse recovery, Signal Proessing 233 (2025), 109937.

\vspace{-0.3cm}

\bibitem{HLSVS} X. Huang, Y. Liu, L. Shi, S. Van Huffel and J. Suykens,
Two-level $\ell_1$ minimization for compressed sensing,
Signal Processing 108 (2015), 459--475.

\vspace{-0.3cm}

\bibitem{LXY13} M. Lai, Y. Xu and W.Yin, Improved iteratively reweighted
least squares for unconstrained smoothed $\ell_q$ minimization,
SIAM J. Numer. Anal. 51 (2013),  927--957.

\vspace{-0.3cm}

\bibitem{LG26} H. Li and X. Geng, Improved RIP-based bounds performance guarantee for sparse signal
recovery via Lorentzian iterative hard thresholding, Signal Processing 241 (2026),  110381.

\vspace{-0.3cm}

\bibitem{LYHX15}  Y. Lou, P. Yin, Q. He and J. Xin, Computing
sparse representation in a highly coherent dictionary based on difference of
$L_1 $ and $L_2$, J. Sci. Comput. 64 (2015),  178--196.

\vspace{-0.3cm}

\bibitem{LF09} J. Lv and Y. Fan, A unified approach to model selection
and sparse recovery using regularized least squares,
Ann. Statist. 37 (2009), 3498--3528.

\vspace{-0.3cm}

\bibitem{DL98} T. Pham Dinh and H. A.  Le Thi, A d.c. optimization
algorithm for solving the trust-region subproblem. SIAM J. Optim.
8 (1998), 476--505.

\vspace{-0.3cm}

\bibitem{S11} Q. Sun,  Sparse approximation property and
stable recovery of sparse signals from noisy measurements,
IEEE Trans. Signal Process. 59 (2011), 5086--5090.

\vspace{-0.3cm}

\bibitem{S12} Q. Sun,  Recovery of sparsest signals
via $\ell_q$-minimization, Appl. Comput. Harmon. Anal. 32 (2012),
329--341.

\vspace{-0.3cm}

\bibitem{WDZ15}  J. Wen, D. Li and F. Zhu,   Stable recovery of
sparse signals via $\ell_p$-minimization, Appl. Comput. Harmon. Anal.
38 (2015), 161--176.

\vspace{-0.3cm}

\bibitem{WC13}  R. Wu and D.-R. Chen, The improved bounds of
restricted isometry constant for recovery via $\ell_p$-minimization,
IEEE Trans. Inform. Theory 59 (2013), 6142--6147.

\vspace{-0.3cm}

\bibitem{XW13} F. Xu and S. Wang, A hybrid simulated annealing
thresholding algorithm for compressed sensing, Signal Processing
93 (2013), 1577--1585.

\vspace{-0.3cm}

\bibitem{xx13} G. Xu and Z. Xu, On the $\ell_1$-norm invariant convex
$k$-sparse decomposition of signals, J. Oper. Res. Soc. China
1 (2013), 537--541.

\vspace{-0.3cm}

\bibitem{XZWCL10}  Z. Xu, H. Zhang, Y. Wang, X. Chang and Y. Liang,
$L_{1/2}$ regularization, Sci. China Inf. Sci. 53 (2010), 1159--1169.

\vspace{-0.3cm}

\bibitem{YLHX15} P. Yin, Y. Lou, Q. He and J. Xin, Minimization of
$\ell_{1-2}$  for compressed sensing,
SIAM J. Sci. Comput. 37 (2015),  A536--A563.

\vspace{-0.3cm}

\bibitem{YY23} Q. Yu and M. Yang, Low-rank tensor recovery via non-convex regularization,
structured factorization and apatio-temporal characteristics,
Pattern Recognition 137 (2023), Article No. 109343, 14 pp.

\vspace{-0.3cm}

\bibitem{ZW25} K. Zhan and A. Wan, Sparse representation for $\ell_p -\alpha\ell_q$
 minimization and uniform condition for the recovery of approximately $k$-sparse signals with prior support information,
 Signal Processing 235 (2025), 110019.

\vspace{-0.3cm}

\bibitem{ZL18} R. Zhang and S. Li, A proof of conjecture on restricted
isometry property constants $\delta_{tk}$ $(0<t<\frac43)$,
IEEE Trans. Inform. Theory 64 (2018),  1699--1705.

\vspace{-0.3cm}

\bibitem{ZL19} R. Zhang and S. Li, Optimal RIP bounds for sparse
signals recovery via $\ell_p$ minimization,
Appl. Comput. Harmon. Anal. 47 (2019), 566--584.

\vspace{-0.3cm}

\bibitem{ZX17} S. Zhang and J. Xin,  Minimization of transformed $L_1$ penalty:
closed form representation and iterative thresholding algorithms,
Commun. Math. Sci. 15 (2017), 511--537.

\vspace{-0.3cm}

\bibitem{ZX18} S. Zhang and J. Xin,   Minimization of transformed $L_1$
penalty: theory, difference of convex function algorithm, and robust
application in compressed sensing, Math. Program. 169 (2018),
no. 1, Ser. B, 307--336.

\vspace{-0.3cm}

\bibitem{ZLWX14} J. Zeng, S. Lin, Y. Wang and Z. Xu, $L_{1/2}$
regularization: convergence of iterative half thresholding algorithm,
IEEE Trans. Signal Process. 62 (2014), 2317--2329.

\end{thebibliography}
\end{document}